\theoremstyle{plain}}
\newtheorem{proposition}{Proposition}[section]
\newtheorem{theorem}{Theorem}[section]
\newtheorem{definition}{Definition}[section]
\newtheorem{corollary}{Corollary}[section]
\newtheorem{lemma}{Lemma}[section]
\newtheorem{remark}{Remark}[section]
\newtheorem{example}{Example}[section]
\numberwithin{equation}{section}
\begin{document}
\markboth{}{}

\title[Existence of densities for stochastic evolution equations driven by FBM] {Existence of densities for stochastic evolution equations driven by fractional Brownian motion}

\author{Jorge A. de Nascimento}

\address{Departamento de Matem\'atica, Universidade Federal da Para\'iba, 13560-970, Jo\~ao Pessoa - Para\'iba, Brazil}\email{jorgeacnas@gmail.com}

\author{Alberto Ohashi}

\address{Departamento de Matem\'atica, Universidade de Bras\'ilia, 70910-900, Bras\'ilia - Distrito Federal, Brazil}\email{amfohashi@gmail.com}

\thanks{}
\date{\today}

\keywords{}
\subjclass{}

\begin{center}

\end{center}

\begin{abstract}
In this work, we prove a version of H\"{o}rmander's theorem for a stochastic evolution equation driven by a trace-class fractional Brownian motion with Hurst exponent $\frac{1}{2} < H < 1$ and an analytic semigroup on a given separable Hilbert space. In contrast to the classical finite-dimensional case, the Jacobian operator in typical solutions of parabolic stochastic PDEs is not invertible which causes a severe  difficulty in expressing the Malliavin matrix in terms of an adapted process. Under a H\"{o}rmander's bracket condition and some algebraic constraints on the vector fields combined with the range of the semigroup, we prove the law of finite-dimensional projections of such solutions has a density w.r.t Lebesgue measure. The argument is based on rough path techniques and a suitable analysis on the Gaussian space of the fractional Brownian motion.
\end{abstract}

\maketitle

\section{Introduction}

H\"{o}rmander's theorem is one of the central aspects of Probability theory with many applications to the theory of partial differential equations, ergodic theory, stochastic filtering and numerical analysis of stochastic processes. Let $X$ be a $d$-dimensional SDE written in Stratonovich form

\begin{equation}\label{fdsdeintrthesis}
dX_t = V_0(X_t)dt + \sum_{j=1}^n V_j(X_t)\circ dW^j_t,
\end{equation}
where $V_0, \ldots, V_n$ are smooth vector fields and $(W_i)_{i=1}^n$ is a standard $n$-dimensional Brownian motion. It is well known that if (\ref{fdsdeintrthesis}) is elliptic namely if, for every point $x\in \mathbb{R}^n$, the linear span
of $\{V_j(x); j=1,\ldots, n\}$ is $\mathbb{R}^d$, then the law of the solution of (\ref{fdsdeintrthesis}) (at a given time $t$) has a smooth density w.r.t Lebesgue measure. Based on the fundamental work of H\"{o}rmander, we know that much weaker conditions on the vector fields, the so-called parabolic H\"{o}rmander's bracket condition, also produce smoothness of the law of $X_t$. This phenomena is called hypoellipticity.

The main tool in proving hypoellipticity for finite-dimensional SDEs is based on Malliavin calculus. More precisely, let $\mathscr{M}_t$ be the Malliavin matrix

$$
\mathscr{M}_t = \big( \langle \mathbf{D}X^i_t, \mathbf{D} X^j_t\rangle_{L^2([0,T];\mathbb{R}^n)} \big)_{1\le i,j\le d},
$$
at a time $t>0$, where $\mathbf{D}X^i_t$ is the Gross-Sobolev-Malliavin derivative of $X^i_t$ w.r.t the Brownian motion. In order to get suitable integrability of the the Malliavin matrix associated with $X_t$, the key idea is to connect $\mathscr{M}_t$ with the Jacobian $J_{s,t}; s\le t$ of the SDE constructed as follows. Denote by $\Phi_t$ the (random) solution map to (\ref{fdsdeintrthesis}) so that $X_t = \Phi_t(x_0)$. It is well-known that under mild integrability assumption, we do have a flow of smooth maps, namely a two parameter family of maps $\Phi_{s.t}$ such that $X_t = \Phi_{s,t}(X_s)$ for every $s\le t$ and such that $\Phi_{t,u}\circ \Phi_{s,t} = \Phi_{s,u}$ and $\Phi_t = \Phi_{0,t}$. For a given initial condition $x_0$, we then denote by $J_{s,t}$ the derivative of $\Phi_{s,t}$ evaluated at $X_s$.

Under rather weak assumptions, the Jacobian is invertible and this fact allows us to write
\begin{equation}\label{repmthesis}
\mathscr{M}_t = J_{0,t}\mathcal{C}_{t}J^*_{0,t},
\end{equation}
where
$$\mathcal{C}_t =\int_0^t J^{-1}_{0,s}V(X_s)V^*(X_s)\big(J^{-1}_{0,s}\big)^*ds,$$
and $V$ is the $d \times n$-matrix-valued function obtained by concatenating the vector fields $V_j$ for $j = 1, \ldots, n$. By representation (\ref{repmthesis}), the invertibility of $\mathscr{M}_t$ is equivalent to the invertibility of the so-called \textit{reduced} Malliavin matrix $\mathcal{C}_t$ given by the following quadratic form

$$\langle \mathcal{C}_t \xi, \xi\rangle = \sum_{j=1}^n \int_0^t\langle \xi, J^{-1}_{0,s}V_j (X_s)\rangle^2ds; \xi \in \mathbb{R}^d. $$
Then, It\^o's formula and Norris's lemma (\cite{norris}) combined with the parabolic H\"{o}rmander's bracket condition allow us to conclude hypoellipticity for finite-dimensional SDEs of the form (\ref{fdsdeintrthesis}).

The analysis of the hypoellipticity phenomena for stochastic partial differential equations (henceforth abbreviated by SPDE) is much harder. The main technical problem with the generalization of H\"{o}rmander's theorem to parabolic SPDEs is the fact that the Jacobian $J_{0,t}$ is typically \textit{not} invertible regardless the type of noise. The existence of densities for finite-dimensional projections of SPDEs driven by Brownian motion was firstly tackled by Baudoin and Teichmann \cite{baudoinTeich} where the linear part of the SPDE generates a group of bounded linear operators on a Hilbert space. In this case, the Jacobian becomes invertible. Shamarova \cite{shamarova} studies the existence of densities for a stochastic evolution equation driven by Brownian motion in 2-smooth Banach spaces. Recently, based on a pathwise Fubini theorem for rough
path integrals, Gerasimovics and Hairer \cite{gera} overcome the lack of invertibility of the Jacobian for SPDEs driven by Brownian
motion. They show that the Malliavin matrix is invertible on every finite-dimensional subspace and
jointly with a purely pathwise Norri’s type lemma, they prove that laws of finite-dimensional projections of SPDE solutions driven by Brownian motion admit smooth densities w.r.t Lebesgue measure. In contrast to \cite{baudoinTeich}, the authors are able to prove existence and smoothness of densities for truly parabolic systems generated by semigroups and SPDEs driven by Brownian motion under a priory integrability conditions on the Jacobian.

The goal of this paper is to prove the existence of densities for finite-dimensional projections for a SPDE driven by a trace-class fractional Brownian motion (henceforth abbreviated by FBM) with Hurst exponent $\frac{1}{2} < H < 1$. The novelty of our work is to handle the infinite-dimensional case jointly with the fractional case which requires a new set of ideas. For FBM driving noise with $H > 1/2$ and under ellipticity assumptions on the vector fields $\{V_i; 0\le i \le n\}$, the existence and smoothness of the density for SDEs are shown by Hu and Nualart \cite{hu} and Nualart and Saussereau \cite{nualart2}. The hypoelliptic case for $H > 1/2$ is treated by Baudoin and Hairer \cite{baudoin} based on previous papers of Nualart and Saussereau \cite{nualart3} and Hu and Nualart \cite{hu}. When $\frac{1}{4} < H < \frac{1}{2}$, the integrability of the Jacobian given by Cass, Litterer and Lyons \cite{cass2} yields smoothness of densities in the elliptic case. The hypoelliptic case was treated in a series of works by Cass and Friz \cite{cass3}, Cass, Friz and Victoir \cite{cass4} and culminating with Cass, Hairer, Litterer and Tindel \cite{cass2} who provide smoothness of densities for a wide class of Gaussian noises including FBM with $\frac{1}{4} < H < \frac{1}{2}$.

\subsection{Main result}
In this article, we aim to provide a version of H\"ormander's theorem for a SPDE of the form
\begin{equation}\label{SPDEthesis}
dX_t = \big(A(X_t) + F(X_t)\big)dt + G(X_t)dB_t,
\end{equation}
where $(A,\text{dom}(A)\big)$ is the infinitesimal generator of an analytic semigroup $\{S(t); t\ge 0\}$ on a separable Hilbert space $E$, $B$ is a trace-class FBM taking values on a separable Hilbert space $U$ with Hurst parameter $\frac{1}{2} < H < 1$ and $F,G$ are smooth coefficients. Let $\mathcal{T}:E\rightarrow \mathbb{R}^d$ be a bounded and surjective linear operator. The goal is to prove, under H\"{o}rmander's bracket conditions, that the law of $\mathcal{T}(X_t)$ has a density w.r.t Lebesgue for every $t>0$. In this article, we obtain the proof of this result under the additional assumption that the analytic semigroup has a dense range in $E$ at a given time $t>0$ which is satisfied in many concrete examples (see Remark \ref{oneexample}). Moreover, in order to overcome the lack of invertibility of the Jacobian operator, some algebraic constraints on the vector fields combined with the range of the semigroup are imposed (Assumption B). To the best of our knowledge, this is the first result of hypoellipticity (existence of densities) for SPDEs driven by FBM. The result is build on a carefully analysis of the It\^o map (solution map)

$$B\mapsto X(B)$$
defined on a suitable abstract Wiener space associated with a trace-class FBM $B$ with parameter $\frac{1}{2} < H < 1$ and taking values on suitable space of increments. By means of rough path techniques, it is shown that $B\mapsto X(B)$ is Fr\'echet differentiable and hence differentiable in the sense of Malliavin calculus. Even though the noise $B$ is more regular than Brownian motion (in the sense of H\"{o}lder regularity), the rough path formalism in the sense of Gubinelli \cite{gubinelli,gubinelli1} allows us to obtain better estimates for the It\^o map compared to the classical approach \cite{young} or other frameworks based on fractional calculus \cite{nualart1}.

Let us define

$$
G_0(x):= A(x) + F(x); x\in \text{dom}(A^\infty),
$$
where $\text{dom}(A^\infty) = \cap_{n\ge 1} \text{dom} (A^n)$ is equipped with the projective limit topology associated with the graph norm of $\text{dom}(A)$. Given the SPDE (\ref{SPDEthesis}), let $\mathcal{V}_k$ be a collection of vector fields given by

$$
\mathcal{V}_0 := \{G_i; i\ge 1\},\quad \mathcal{V}_{k+1}:=\mathcal{V}_k \cup \big\{ [G_j, V]; V\in \mathcal{V}_k~\text{and}~j\ge 0 \big\},
$$
where $G_i(x) := G(x)(e_i)$ for some orthonormal basis $(e_i)_{i=1}^\infty$ of $U$ and $[\cdot, \cdot]$ denotes the Lie bracket (see (\ref{liebracket})) between smooth vector fields on $\text{dom}(A^\infty)$. We also define the vector spaces $\mathcal{V}_k(x_0):=\text{span}\{V(x_0); V\in \mathcal{V}_k\}$ and we set

$$\mathcal{D}(x_0):=\cup_{k\ge 1}\mathcal{V}_k(x_0),$$
for each $x_0\in \text{dom}(A^\infty)$. Let us now state the main result of this work.

\begin{theorem}\label{ExistenceResultTHESIS}
Let $X^{x_0}$ be the SPDE solution of (\ref{SPDEthesis}) with a given initial condition $x_0\in \text{dom}(A^\infty)$. For a given $t\in (0,T]$, assume that $\mathcal{D}(x_0)$ and $S(t)E$ are dense subsets of $E$. Under assumptions H1-A1-A2-A3-B1-B2-C1-C2-C3, if $\mathcal{T}:E\rightarrow \mathbb{R}^d$ is a bounded linear surjective operator, then the law of $\mathcal{T}(X^{x_0}_t)$ has a density w.r.t Lebesgue measure in $\mathbb{R}^d$.
\end{theorem}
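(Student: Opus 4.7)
The plan is to apply the Bouleau--Hirsch criterion: the law of a Malliavin differentiable random variable on an abstract Wiener space admits a density with respect to Lebesgue measure provided its Malliavin matrix is almost surely invertible. First I would fix the Gaussian framework associated to the trace-class FBM $B$ with $H>\tfrac12$, namely the Cameron--Martin space $\mathcal{H}$ realized via fractional integrals of the kernel $K_H$ applied to $U$-valued $L^2$ functions, and reduce the theorem to showing that the Malliavin matrix
\[
\mathscr{M}_t=\bigl(\langle \mathbf{D}(\mathcal{T}^i X^{x_0}_t),\mathbf{D}(\mathcal{T}^j X^{x_0}_t)\rangle_{\mathcal{H}}\bigr)_{1\le i,j\le d}
\]
is almost surely strictly positive definite on $\mathbb{R}^d$; equivalently, for every $\xi\in\mathbb{R}^d\setminus\{0\}$,
\[
\langle \mathscr{M}_t\xi,\xi\rangle=\|\mathbf{D}(\langle\xi,\mathcal{T}X^{x_0}_t\rangle)\|^2_{\mathcal{H}}>0\ \text{a.s.}
\]

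Next I would make this quadratic form concrete. Since the It\^o map $B\mapsto X(B)$ is Fr\'echet differentiable on the rough-path space (established earlier in the paper via Gubinelli's formalism), it is automatically Malliavin differentiable, and the directional Malliavin derivative along $h\in\mathcal{H}$ admits a representation of the form $\mathbf{D}^h X_t=\int_0^t J_{s,t}\,G(X_s)\,\dot h_s\,ds$, where $J_{s,t}$ is the (non-invertible) Jacobian derived from linearizing \eqref{SPDEthesis}. Substituting into the quadratic form and using the $L^2$-based isometry for $\|\cdot\|_{\mathcal{H}}$ through $K_H$ yields an explicit Young-integral expression whose vanishing translates into a vanishing condition on the $E$-valued path $s\mapsto G(X_s)^\ast J_{s,t}^\ast\mathcal{T}^\ast\xi$ on $[0,t]$.

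The heart of the argument is then to exclude degeneracy despite the non-invertibility of $J_{0,t}$. Following the strategy of Gerasimovics--Hairer, suitably adapted to the fractional setting, I would argue contrapositively: on the event $\{\langle\mathscr{M}_t\xi,\xi\rangle=0\}$, a pathwise Norris-type lemma for Young integrals forces $\langle J_{s,t}^\ast\mathcal{T}^\ast\xi,G_i(X_s)\rangle\equiv 0$ on $[0,t]$ for every $i\ge 1$. I would then iterate: differentiating these identities in $s$ with the Young rough-path calculus, each iteration invoking the Norris-type bound again, produces a similar vanishing statement for Lie brackets $[G_j,V]$ with $V\in\mathcal{V}_k$; evaluating at $s=t$ and exploiting Assumption B together with the density of the range $S(t)E$ to move from the test vector at time $t$ back to $\mathcal{T}^\ast\xi$, the induction yields $\langle \mathcal{T}^\ast\xi,V(x_0)\rangle=0$ for every $V\in\bigcup_k\mathcal{V}_k$. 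Density of $\mathcal{D}(x_0)$ in $E$ and surjectivity of $\mathcal{T}$ then force $\xi=0$, a contradiction, and Bouleau--Hirsch concludes the proof.

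I expect the main obstacle to lie in this last step, specifically in the combined infinite-dimensional and $H>\tfrac12$ adaptation of the Norris-type lemma. The classical Norris argument rests on the martingale/It\^o structure, absent here, so one must invoke a purely deterministic version valid for Young integrals; moreover, the finite-dimensional reduction $\mathscr{M}_t=J_{0,t}\mathcal{C}_tJ_{0,t}^\ast$ is unavailable, so the iterated elimination must be executed directly on the adjoint Jacobian $J^\ast_{s,t}$, which is only bounded rather than invertible on $E$. It is precisely here that Assumptions B1--B2 (algebraic constraints tying the Lie algebra of vector fields to the range of $S(\cdot)$) together with the density of $S(t)E$ in $E$ must be brought to bear to close the induction and transfer vanishing from time $t$ back to the initial condition~$x_0$.
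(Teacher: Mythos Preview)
Your broad outline (Bouleau--Hirsch, Malliavin differentiability via Fr\'echet differentiability of the It\^o map, then a nondegeneracy argument by contradiction and an inductive Lie-bracket elimination) matches the paper's architecture. But the core mechanism you propose for the nondegeneracy step is not the one the paper uses, and your sketch of it contains a real gap.

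The paper does \emph{not} work directly with $J_{s,t}^\ast$ in the style of Gerasimovics--Hairer. Instead it constructs, under Assumptions B1--B2, a genuine \emph{right inverse} $\mathbf{J}^+_{0\rightarrow t}(x_0)=R_t(x_0)S(-t)$ of the Jacobian on the range $S(t)E$ (this is the whole point of Section~\ref{jacobianSECTION}). With that in hand one \emph{does} get the factorization you thought unavailable: Lemma~\ref{Malliavinmatrix} gives $\gamma_t=(\mathcal{T}\mathbf{J}_{0\rightarrow t})\,\mathcal{C}_t\,(\mathcal{T}\mathbf{J}_{0\rightarrow t})^\ast$, with the reduced operator $\mathcal{C}_t$ written through $\mathbf{J}^+_{0\rightarrow r}G_\ell(X^{x_0}_r)$. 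The density of $S(t)E$ is used only once, to show that $\mathbf{J}^\ast_{0\rightarrow t}$ (hence $(\mathcal{T}\mathbf{J}_{0\rightarrow t})^\ast$) is injective, which reduces the problem to positivity of $\mathcal{C}_t$. Your description of the role of Assumptions B (``algebraic constraints tying the Lie algebra to the range of $S$'') is off: B1--B2 are precisely the hypotheses that make $s\mapsto S(-s)\nabla G_i(X_s)S(s)$ bounded and H\"older so that the linear equation for the right inverse has a solution.

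Second, the paper does not invoke a Norris-type lemma. The qualitative separation of drift and Young-integral parts is obtained from the \emph{true roughness} of the trace-class FBM and the Doob--Meyer-type uniqueness for controlled paths (Theorem~\ref{doobmeyer}, after Friz--Shekhar). This is the correct tool in the $H>1/2$ regime; a quantitative Norris bound is neither available nor needed for mere existence of a density.

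Third, the induction in the paper runs through the explicit change-of-variable formula (Proposition~\ref{Itobrackets1})
\[
\mathbf{J}^+_{0\rightarrow s}V(X^{x_0}_s)=V(x_0)+\int_0^s\mathbf{J}^+_{0\rightarrow r}[G_0,V](X^{x_0}_r)\,dr+\sum_{\ell\ge1}\sqrt{\lambda_\ell}\int_0^s\mathbf{J}^+_{0\rightarrow r}[G_\ell,V](X^{x_0}_r)\,d\beta^\ell_r,
\]
combined with a Blumenthal 0--1 law argument that makes the relevant subspace $K_{0+}$ deterministic. One then evaluates at $s=0$, where $\mathbf{J}^+_{0\rightarrow 0}=\mathrm{Id}$, to obtain $\langle V(x_0),\varphi\rangle=0$. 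Your plan evaluates at $s=t$ instead; that would give information about $V(X^{x_0}_t)$, not $V(x_0)$, and there is no clear way to ``move back to $\mathcal{T}^\ast\xi$'' from there. More fundamentally, without the right inverse you have no clean equation in $s$ for $J_{s,t}^\ast\mathcal{T}^\ast\xi$ that generates Lie brackets upon differentiation, so the iterative elimination you sketch does not close.
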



The remainder of this paper is organized as follows. In Section \ref{preliminariesSECTION}, we establish some preliminary results on the Gaussian space of trace-class FBM and the associated Malliavin calculus. Section \ref{MalliavinDiffSection} and \ref{jacobianSECTION} present the main technical results concerning regularity of the It\^o map in the sense of Malliavin calculus and the existence of the right-inverse of the Jacobian, respectively. Section \ref{ExistenceSection} presents the proof of Theorem \ref{ExistenceResultTHESIS}.


\section{Preliminaries}\label{preliminariesSECTION}

\subsection{Fractional powers of infinitesimal generators}
In this work, we make extensive use of the regularizing effects of an analytic semigroup. Throughout this article, $E$ is a given separable Hilbert space and $(A,\text{dom}(A)\big)$ is the infinitesimal generator of an analytic semigroup $\{S(t); t\ge 0\}$ on $E$ satisfying the following property: there exist constants $\lambda,M>0$ such that

$$\|S(t)\|\le M e^{-\lambda t}~\text{for all}~t\ge 0.$$

In this case, we can define the fractional power $\big((-A)^\alpha,\text{Dom}((-A)^\alpha)\big)$ for any $\alpha\in \mathbb{R}$ (see Sections 2.5 and 2.6 in \cite{pazy}). To keep notation simple, we denote $E_\alpha:=\text{Dom}((-A)^\alpha)$ for $\alpha >0$ equipped with the norm $|x|_{\alpha}:=\|(-A)^\alpha x\|_E$ which is equivalent to the graph norm of $(-A)^\alpha$. If $\alpha < 0$, let $E_\alpha$ be the completion of $E$ w.r.t to the norm $|x|_{\alpha}:=\| (-A)^\alpha x\|_E$. If $\alpha=0$, we set $E_\alpha = E$. Then, $(E_\alpha)_{\alpha\in \mathbb{R}}$ is a family of separable Hilbert spaces such that $E_\delta \hookrightarrow E_\alpha$ whenever $\delta \ge \alpha$. Moreover, $S(t)$ may be extended to $E_\alpha$ as bounded linear operators for $\alpha<0$ and $t\ge 0$. Moreover, $S(t)$ maps $E_\alpha$ to $E_\delta$ for every $\alpha\in \mathbb{R}$ and $\delta \ge 0$. We also denote $\|\cdot\|_{\beta\rightarrow \alpha}$ as the norm operator of the space of bounded linear operators $\mathcal{L}(E_\beta,E_\alpha)$ from $E_\beta$ to $E_\alpha$ and, with a slight abuse of notation, we set $\|\cdot\| = \|\cdot\|_{0\rightarrow 0}$. The space of bounded multilinear operators from the $n$-fold space $E^n_\alpha$ to $E_\alpha$ is equipped with the usual norm $\|\cdot\|_{(n),\alpha\rightarrow \alpha}$ for $\alpha \ge 0$.

\subsection{Preliminaries on the gaussian space of fractional Brownian motion}\label{gaussianspace}
Let us start our discussion by recalling some elementary facts on the fractional Brownian motion (FBM). The FBM with Hurst parameter $0 < H < 1$ is a centered Gaussian process with covariance

$$R_H(t,s):=\frac{1}{2}\big(s^{2H} + t^{2H} - |t-s|^{2H}\big).$$
Throughout this paper, we fix $1/2 < H < 1$. Let $\beta = \{\beta_t; 0\le t \le T\}$ be a FBM defined on a complete probability space $(\Omega,\mathcal{F},\mathbb{P})$. Let $\mathcal{E}$ be the set of all step functions on $[0,T]$ equipped with the inner product

$$\langle \mathds{1}_{[0,t]},\mathds{1}_{[0,s]}\rangle_{\mathcal{H}}:=R_H(t,s).$$
One can check (see e.g Chapter 5 in \cite{nualart} or Chapter 1 in \cite{mishura}) for every $\varphi,\psi\in \mathcal{E}$, we have

\begin{equation}\label{innerproduct}
\langle \varphi,\psi\rangle_{\mathcal{H}}= \alpha_H\int_0^T\int_0^T |r-u|^{2H-2}\varphi(r)\psi(u)dudr,
\end{equation}
where $\alpha_H := H(2H-1)$. Let $\mathcal{H}$ be the reproducing kernel Hilbert space associated with FBM, i.e., the closure of $\mathcal{E}$ w.r.t (\ref{innerproduct}). The mapping $\mathds{1}_{[0,t]}\rightarrow \beta_t$ can be extended to an isometry between $\mathcal{H}$ and the first chaos $\{\beta(\varphi); \varphi\in \mathcal{H}\}$. We shall write this isometry as $\beta(\varphi)$.

Let us define the following kernel

\begin{equation}\label{KHoperator}
K_H(t,s):=c_Hs^{1/2-H}\int_s^t (u-s)^{H-\frac{3}{2}}u^{H-\frac{1}{2}}du; s < t,
\end{equation}
where $c_H = \Big(\frac{H(2H-1)}{\text{beta}(2-2H,H-\frac{1}{2})}\Big)^{\frac{1}{2}}$ and $\text{beta}$ denotes the Beta function. From (\ref{KHoperator}), we have

$$\frac{\partial K_H}{\partial t}(t,s) = c_H\Big(\frac{t}{s}\Big)^{H-\frac{1}{2}}(t-s)^{H-\frac{3}{2}}.$$
Consider the linear operator $K^*_H:\mathcal{E}\rightarrow L^2([0,T];\mathbb{R})$ defined by

$$(K^*_H \varphi)(s):=\int_s^T \varphi(t)\frac{\partial K_H}{\partial t}(t,s)dt; 0\le s\le T.$$
We observe $(K^*_H \mathds{1}_{[0,t]})(s) = K_H(t,s)\mathds{1}_{[0,t]}(s)$. It is well-known (see e.g \cite{nualart}) that $K_H^*$ can be extended to an isometric isomorphism between $\mathcal{H}$ and $L^2([0,T];\mathbb{R})$. Moreover,

\begin{equation}\label{iso1}
\beta(\varphi) = \int_0^T (K^*_H\varphi)(t)dw_t; \varphi\in \mathcal{H},
\end{equation}
where

\begin{equation}\label{inverseBM}
w_t :=\beta\big((K^*_H)^{-1}(\mathds{1}_{[0,t]})\big)
\end{equation}
is a real-valued Brownian motion. From (\ref{iso1}), we can represent

$$\beta_t = \int_0^t K_H(t,s)dw_s; 0\le t \le T,$$
and (\ref{inverseBM}) implies both $\beta$ and $w$ generate the same filtration. Lastly, we recall that $\mathcal{H}$ is a linear space of distributions of negative
order. In order to obtain a space of functions contained in $\mathcal{H}$, we consider the linear space $|\mathcal{H}|$ as the space of measurable functions $f:[0,T]\rightarrow\mathbb{R}$ such that

\begin{equation}\label{moduloHnorm}
\| f\|^2_{|\mathcal{H}|}:=\alpha_H \int_0^T\int_0^T |f(t)||f(s)||t-s|^{2H-2}dsdt< \infty,
\end{equation}
for a constant $\alpha_H>0$. The space $|\mathcal{H}|$ is a Banach space with the norm (\ref{moduloHnorm}) and isometric to a subspace of $\mathcal{H}$ which is not complete under the inner product (\ref{innerproduct}). Moreover, $\mathcal{E}$ is dense in $|\mathcal{H}|$. The following inclusions hold true

\begin{equation}\label{inclusions}
L^{\frac{1}{H}}([0,T];\mathbb{R})\hookrightarrow |\mathcal{H}|\hookrightarrow \mathcal{H},
\end{equation}
where

\begin{equation}\label{niceinner}
\langle f,g \rangle_\mathcal{H} = \alpha_H \int_0^T\int_0^T |u-v|^{2H-2}f(u) g(v) dudv,
\end{equation}
for $f,g\in L^{\frac{1}{H}}([0,T];\mathbb{R})$. Moreover, there exists a constant $C$ such that

\begin{equation}\label{repniceinner}
\|f\|^2_\mathcal{H} =C\int_0^T |I^{H-\frac{1}{2}}_{T-}f(s)|^2ds,
\end{equation}
where $I^{H-\frac{1}{2}}_{T-}$ is the right-sided fractional integral given by
$$I^{H-\frac{1}{2}}_{T-}f(x):=\frac{1}{\Gamma(H-\frac{1}{2})}\int^T_x f(s)(s-x)^{H-\frac{3}{2}}ds; 0\le x\le T.$$
For further details, we refer the reader to Lemma 1.6.6 and (1.6.14) in \cite{mishura}.




\subsection{Malliavin calculus on Hilbert spaces}

Throughout this article, we fix a self-adjoint, non-negative and trace-class operator $Q:U\rightarrow U$ defined on a separable Hilbert space $U$. Then, there exists an orthonormal basis $\{e_i; i\ge 1\}$ of $U$ and eigenvalues $\{\lambda_i; i\ge 1\}$ such that

$$Q e_i = \lambda_i e_i; i\ge 1,$$
where $\text{trace}~Q = \sum_{k=1}^\infty \lambda_k < + \infty$. We assume that $\lambda_k >0$ for every $k\ge 1$. Let $U_0:= Q^{1/2}(U)$ be the linear space
equipped with the inner product

$$\langle u_0, v_0\rangle_0:= \langle Q^{-1/2}u_0, Q^{-1/2}v_0,\rangle_U; u_0,v_0\in U_0,$$
where $Q^{-1/2}$ is the inverse of $Q^{1/2}$. Then, $(U_0,\langle\cdot,\cdot \rangle_0)$ is a separable Hilbert space with an orthonormal basis $\{\sqrt{\lambda_k}e_k; k\ge 1\}$.




Let $W$ be a $Q$-Brownian motion given by

$$W_t:= \sum_{k\ge 1} \sqrt{\lambda}_k e_k w^k_t; t\ge 0,$$
where $(w^k)_{k\ge 1}$ is a sequence of independent real-valued Brownian motions. Let $(\beta^k)_{k\ge 1}$ be a sequence of independent FBMs, where $\beta^k$ is associated with $w^k$ via (\ref{iso1}), i.e.,

$$\beta^k_t = \int_0^t K_H(t,s)dw^k_s; 0 \le t\le T.$$
We then set

\begin{equation}\label{QFBM}
B_t := \sum_{k=1}^\infty \sqrt{\lambda_k} e_k \beta^k_t; 0\le t
\le T.
\end{equation}
For separable Hilbert spaces $E_1$ and $E_2$, let us denote $\mathcal{L}_2(E_1;E_2)$ as the space of all Hilbert-Schmidt operators from $E_1$ to $E_2$ equipped with the usual inner product. Let $\mathcal{F}$ be the sigma-field generated by $\{B(\varphi); \varphi \in\mathcal{H}\otimes \mathcal{L}_2(U_0, \mathbb{R})\}$ where $B:\mathcal{H}\otimes \mathcal{L}_2(U_0, \mathbb{R})\rightarrow L^2(\Omega,\mathcal{F},\mathbb{P})$ is the linear operator defined by

$$B(\Phi):=\int_0^T \Phi(t) dB_t:=\sum_{k=1}^\infty \int_0^T (K^*_H\Phi^k)(t)dw^k_t; \Phi\in\mathcal{H}\otimes \mathcal{L}_2(U_0, \mathbb{R}),$$
where

$$\Phi^i:= \Phi(\sqrt{\lambda_i}e_i); i\ge 1.$$
We recall the tensor product $\mathcal{H}\otimes \mathcal{L}_2(U_0, \mathbb{R})$ is isomorphic to $\mathcal{L}_2(U_0,\mathcal{H})$. The elements of $\mathcal{H}\otimes \mathcal{L}_2(U_0, \mathbb{R})$ are described by

$$\sum_{m,j=1}^\infty a_{mj}\sqrt{\lambda_m}e_m \otimes h_j,$$
where $(a_{mj})_{m,j}\in \ell^2(\mathbb{N}^2)$, $(h_j)$ is an orthonormal basis for $\mathcal{H}$ and we denote

$$ e \otimes h: y \in U_0 \mapsto\langle e,y \rangle_{U_0} h.$$
It is easy to check that $\mathbb{E}\big[B(\Phi) B(\Psi)\big]= \langle \Phi,\Psi\rangle_{\mathcal{L}_2(U_0, \mathcal{H})}$ for every $\Phi,\Psi\in \mathcal{L}_2(U_0, \mathcal{H})$. In this case, $\Big(\Omega, \mathcal{F},\mathbb{P}; \mathcal{L}_2(U_0, \mathcal{H})\Big)$ is the Gaussian space associated with the isonormal Guassian process $B$.

For Hilbert spaces $E_1$ and $E_2$, let $C^k_p(E_1;E_2)$ be the space of all functions $f:E_1\rightarrow E_2$ such that $f$ and all its derivatives have polynomial growth. Let $\mathcal{P}$ be the set of all cylindrical random variables of the form

\begin{equation}\label{cili1}
F = f(B(\varphi_1),\ldots, B(\varphi_m)),
\end{equation}
where $f\in C^\infty_p(\mathbb{R}^m;\mathbb{R})$ and $\varphi_i \in \mathcal{L}_2(U_0,\mathcal{H}); i=1, \ldots, m$ for some $m\ge 1$. The Malliavin derivative of an element of $F\in \mathcal{P}$ of the form (\ref{cili1}) over the Gaussian space $\Big(\Omega, \mathcal{F},\mathbb{P}; \mathcal{L}_2(U_0, \mathcal{H})\Big)$ is defined by

$$\mathbf{D} F: = \sum_{k=1}^m \frac{\partial}{\partial x_k}f(B(\varphi_1),\ldots, B(\varphi_m)) \varphi_k.$$
We observe

$$\langle \mathbf{D}F, h \rangle_{\mathcal{L}_2(U_0, \mathcal{H})} =\sum_{k=1}^m \frac{\partial}{\partial x_k}f(B(\varphi_1),\ldots, B(\varphi_m)) \langle \varphi_k, h\rangle_{\mathcal{L}_2(U_0, \mathcal{H})}$$
$$ = \frac{d}{d\epsilon}f\big( B(\varphi_1) + \epsilon \langle \varphi_1,h\rangle_{\mathcal{L}_2(U_0, \mathcal{H})} ,\ldots, B(\varphi_m) + \epsilon\langle\varphi_m,h\rangle_{\mathcal{L}_2(U_0, \mathcal{H})} \big)|_{\epsilon=0}.$$

 For a given separable Hilbert space $E$, let $\mathcal{P}(E)$ be the set of all cylindrical $E$-valued random variables of the form

$$F = \sum_{j=1}^n F_j h_j,$$
where $F_j \in \mathcal{P}$ and $h_j\in E$ for $j=1, \ldots, n$ and $n\ge 1$. We then define

$$\mathbf{D} F := \sum_{j=1}^n \mathbf{D} F_j \otimes h_j.$$

A routine exercise yields the following result.

\begin{lemma}
The operator $\mathbf{D}:\mathcal{P}(E)\subset L^p(\Omega; E)\rightarrow L^p\big(\Omega;\mathcal{L}_2(U_0, \mathcal{H})\otimes E\big)$ is closable and densely defined for every $p \ge 1$.
\end{lemma}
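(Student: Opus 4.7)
The plan is to prove the two assertions separately, treating density by a monotone class argument and closability through integration by parts against the isonormal Gaussian process $B$ introduced in Section \ref{gaussianspace}.

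For density, since $\mathcal{F}$ is generated by $\{B(\Phi) : \Phi \in \mathcal{L}_2(U_0,\mathcal{H})\}$, a standard monotone class argument shows that smooth polynomial functions of finitely many $B(\Phi_1),\ldots,B(\Phi_m)$ are dense in $L^p(\Omega,\mathcal{F},\mathbb{P})$. Such functionals already lie in $\mathcal{P}$ (take $f \in C^\infty_p(\mathbb{R}^m;\mathbb{R})$ to be a polynomial), so $\mathcal{P}$ is dense in $L^p(\Omega)$. Density of $\mathcal{P}(E)$ in $L^p(\Omega;E)$ then follows from the standard fact that finite sums $\sum_{j=1}^n F_j h_j$, with $F_j \in L^p(\Omega)$ and $h_j \in E$, are dense in $L^p(\Omega;E)$.

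For closability, writing $F = \sum_j F_j h_j \in \mathcal{P}(E)$ and pairing with an arbitrary $e \in E$ reduces matters to the scalar case: one must show that whenever $F_n \in \mathcal{P}$ satisfies $F_n \to 0$ in $L^p(\Omega)$ and $\mathbf{D} F_n \to \eta$ in $L^p(\Omega;\mathcal{L}_2(U_0,\mathcal{H}))$, then $\eta = 0$. The key tool is the integration-by-parts identity
\begin{equation*}
\mathbb{E}\big[\langle \mathbf{D} F, h\rangle_{\mathcal{L}_2(U_0,\mathcal{H})}\, G\big] \;=\; \mathbb{E}\big[F G\, B(h)\big] \;-\; \mathbb{E}\big[F\,\langle \mathbf{D} G, h\rangle_{\mathcal{L}_2(U_0,\mathcal{H})}\big],
\end{equation*}
valid for every $F,G \in \mathcal{P}$ and $h \in \mathcal{L}_2(U_0,\mathcal{H})$. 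This identity follows from the classical finite-dimensional Gaussian integration-by-parts formula applied to the jointly Gaussian vector $(B(\varphi_1),\ldots,B(\varphi_m),B(h))$, after orthonormalizing the $\varphi_k$'s in $\mathcal{L}_2(U_0,\mathcal{H})$ and using the isometry $\mathbb{E}[B(\Phi)B(\Psi)] = \langle \Phi,\Psi\rangle_{\mathcal{L}_2(U_0,\mathcal{H})}$.

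Armed with this, I would fix $G \in \mathcal{P}$ with $G$ and $\mathbf{D} G$ bounded, and any $h \in \mathcal{L}_2(U_0,\mathcal{H})$. Substituting $F_n$ into the identity and passing to the limit, the right-hand side vanishes by H\"older's inequality, since $G\,B(h)$ and $\langle \mathbf{D} G, h\rangle$ belong to every $L^q(\Omega)$, while the left-hand side converges to $\mathbb{E}[\langle \eta, h\rangle\, G]$. Therefore $\mathbb{E}[\langle \eta, h\rangle\, G] = 0$ for every admissible $G$; since such $G$'s are dense in $L^q(\Omega)$ with $q$ the conjugate exponent of $p$, this forces $\langle \eta, h\rangle_{\mathcal{L}_2(U_0,\mathcal{H})} = 0$ almost surely. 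Running $h$ over a countable dense subset of $\mathcal{L}_2(U_0,\mathcal{H})$ yields $\eta = 0$. The only non-routine point is the rigorous derivation of the integration-by-parts identity in the present Hilbert-valued isonormal setting, but this reduces to the classical scalar Gaussian calculation once $B$ is identified with white noise on $\mathcal{L}_2(U_0,\mathcal{H})$ via the isometries recalled in Section \ref{gaussianspace}.
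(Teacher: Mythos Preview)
Your proposal is correct and follows the standard textbook argument (as in Nualart's monograph, Chapter 1) for closability of the Malliavin derivative on an abstract Wiener/isonormal Gaussian space; the paper itself gives no proof, merely noting that the result is a ``routine exercise'', so your outline is precisely the canonical argument the authors are alluding to. The only cosmetic point is that you need not insist on $G$ and $\mathbf{D}G$ being bounded: since elements of $\mathcal{P}$ have polynomial growth and Gaussian variables have all moments, H\"older's inequality already handles the limit, and density of such $G$'s in $L^q(\Omega)$ is part of your first step.
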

For an integer $k\ge 1$ and $p\ge 1$, let $\mathbb{D}^{1,p}(E)$ be the completion of $\mathcal{P}(E)$ w.r.t the semi-norm

$$\| F\|_{\mathbb{D}^{1,p}(E)}:= \Bigg[\mathbb{E}\|F\|^p_E + \mathbb{E}\|\mathbf{D}F\|^p_{\mathcal{L}_2(U_0,\mathcal{H})\otimes E}\Bigg]^{1/p}.$$
Let us now devote our attention to some criteria for checking when a given functional $F:\Omega \rightarrow E$ belongs to the Sobolev spaces $\mathbb{D}^{1,p}(E)$ for $p>1$. In the sequel, $loc$ denotes localization in the sense of \cite{nualart}.

\begin{lemma}\label{weakCH}
For a given $p >1$, assume that $F\in L^p_{loc}(\Omega; E)$ and $\langle F,x\rangle_E \in \mathbb{D}^{1,p}_{loc}(\mathbb{R})$ for every $x\in E$. If there exists $\xi\in L^p_{loc}\big(\Omega;\mathcal{L}_2(U_0;\mathcal{H})\otimes E\big)$ such that

\begin{equation}\label{weakch}
\Big\langle \mathbf{D} \langle F,u\rangle_E, h \Big\rangle_{\mathcal{L}_2(U_0;\mathcal{H})}  = \langle \xi(u), h \rangle_{\mathcal{L}_2(U_0;\mathcal{H})},
\end{equation}
for every $u\in E$ and $h\in\mathcal{L}_2(U_0;\mathcal{H})$, then $F\in \mathbb{D}^{1,p}_{loc}(E)$ and $\mathbf{D}F = \xi$.
\end{lemma}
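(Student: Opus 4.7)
The plan is to project $F$ onto a fixed orthonormal basis of $E$, compute the Malliavin derivatives of the finite-dimensional truncations directly from the hypothesis, and close the argument using the closability of $\mathbf{D}$ asserted in the previous lemma. A localization step at the end transfers the conclusion from the non-localized case to the one actually stated.

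Assume first that $F\in L^p(\Omega;E)$, every $\langle F,x\rangle_E$ belongs to $\mathbb{D}^{1,p}(\mathbb{R})$, and $\xi\in L^p(\Omega;\mathcal{L}_2(U_0;\mathcal{H})\otimes E)$. Fix an orthonormal basis $\{x_j\}_{j\ge 1}$ of $E$ and set
$$
F_n := \sum_{j=1}^n \langle F,x_j\rangle_E\, x_j.
$$
Each $F_n$ lies in $\mathcal{P}(E)\subset \mathbb{D}^{1,p}(E)$, and $F_n\to F$ in $L^p(\Omega;E)$ by Parseval combined with dominated convergence (majorant $\|F\|_E$). Using the canonical identification $\mathcal{L}_2(U_0;\mathcal{H})\otimes E\simeq \mathcal{L}_2(E;\mathcal{L}_2(U_0;\mathcal{H}))$, I regard $\xi$ as a Hilbert–Schmidt map $u\mapsto\xi(u)$. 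Hypothesis (\ref{weakch}) applied with $u=x_j$ and $h\in\mathcal{L}_2(U_0;\mathcal{H})$ arbitrary forces $\mathbf{D}\langle F,x_j\rangle_E=\xi(x_j)$ in $\mathcal{L}_2(U_0;\mathcal{H})$, whence
$$
\mathbf{D}F_n = \sum_{j=1}^n \xi(x_j)\otimes x_j
$$
is the rank-$n$ partial sum of $\xi$ in this basis. Consequently $\|\xi-\mathbf{D}F_n\|^2_{\mathcal{L}_2(U_0;\mathcal{H})\otimes E}=\sum_{j>n}\|\xi(x_j)\|^2\to 0$ almost surely, with uniform majorant $\|\xi\|^2_{\mathcal{L}_2(U_0;\mathcal{H})\otimes E}\in L^{p/2}(\Omega)$, and dominated convergence yields $\mathbf{D}F_n\to\xi$ in $L^p(\Omega;\mathcal{L}_2(U_0;\mathcal{H})\otimes E)$. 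Closability of $\mathbf{D}$ from the previous lemma applied to the convergent pair $(F_n,\mathbf{D}F_n)\to(F,\xi)$ gives $F\in\mathbb{D}^{1,p}(E)$ with $\mathbf{D}F=\xi$.

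It remains to upgrade this to the localized statement. I would construct a common localizing sequence $\Omega_n\uparrow\Omega$---for instance, intersecting the truncation sets $\{\|F\|_E\le n\}\cap \{\|\xi\|_{\mathcal{L}_2(U_0;\mathcal{H})\otimes E}\le n\}$ with the localizing sets supplied by the assumption at each element of a fixed countable dense subset of $E$---truncate accordingly to obtain honest global representatives in $\mathbb{D}^{1,p}$, and apply the previous step on each $\Omega_n$. The main technical obstacle I anticipate is precisely this construction: the hypothesis $\langle F,x\rangle_E\in \mathbb{D}^{1,p}_{loc}(\mathbb{R})$ is a priori stated for every $x$ with $x$-dependent localizing sequences, and care is required to assemble a single increasing family of events compatible with all the scalar-valued localizations simultaneously. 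All other steps reduce to the Hilbert-basis approximation combined with the closability of $\mathbf{D}$.
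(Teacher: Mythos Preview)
Your argument in the non-localized case is essentially correct, with one slip: $F_n=\sum_{j\le n}\langle F,x_j\rangle_E\,x_j$ does \emph{not} lie in $\mathcal{P}(E)$, since the scalar coefficients $\langle F,x_j\rangle_E$ are only in $\mathbb{D}^{1,p}(\mathbb{R})$, not smooth cylindrical functionals. What you actually use (and what is true) is that $F_n\in\mathbb{D}^{1,p}(E)$ with $\mathbf{D}F_n=\sum_{j\le n}\mathbf{D}\langle F,x_j\rangle_E\otimes x_j$; this follows by approximating each scalar coefficient from $\mathcal{P}$ and passing to the limit. With that correction the basis-truncation plus closability argument goes through.

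Your route is genuinely different from the paper's. The authors do not carry out any such construction; they simply observe that the statement is an instance of Theorem~3.3 in Pronk--Veraar \cite{veraar}, which gives exactly this weak-to-strong criterion for $E$-valued Malliavin differentiability in the UMD setting (Hilbert spaces being UMD). The advantage of your approach is that it is self-contained and transparent in the Hilbert case; the advantage of invoking \cite{veraar} is that the delicate localization bookkeeping---which you correctly flag as the main obstacle---is already handled there in full generality. In particular, your proposed construction of a common localizing sequence via a countable dense subset of $E$ and a diagonal-type intersection is the right idea, but making it rigorous (ensuring $\Omega_n\uparrow\Omega$ while simultaneously controlling infinitely many scalar localizations, and extending from a dense set to all of $E$) requires care that you have not supplied. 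This is not a fatal gap, but it is the one place where the paper's appeal to an external reference does real work that your sketch leaves open.
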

\begin{proof}
Consider the Gaussian space $\Big(\Omega, \mathcal{F},\mathbb{P}; \mathcal{L}_2(U_0, \mathcal{H})\Big)$, take a localizing sequence $(\Omega_n,F_n)\in \mathcal{F}\times \mathbb{D}^{1,2}(\mathbb{R})$ such that $F_n = \langle F,u\rangle_E$ on $\Omega_n$ and $\Omega_n\uparrow \Omega$ as $n\rightarrow +\infty$. Then, apply Theorem 3.3 given by \cite{veraar}.
\end{proof}
In view of the H\"{o}lder path regularity of the underlying noise, it will be useful to play with Fr\'echet and Malliavin derivatives. In this case, it is convenient to realize $\mathbb{P}$ as a Gaussian probability measure defined on a suitable H\"{o}lder-type separable Banach space equipped with a Cameron-Martin space which supports possibly infinitely many independent FBMs. Let $\mathcal{C}^\infty_0(\mathbb{R}_+)$ be the space of smooth functions $w:[0,\infty)\rightarrow\mathbb{R}$ satisfying $w(0)=0$ and having compact support. Given $\gamma\in(0,1)$ and $\delta\in (0,1)$, we define for every $w\in \mathcal{C}^\infty_0(\mathbb{R}_+)$, the norm

$$\|w\|_{\mathcal{W^{\gamma,\delta}}}:=\sup_{t,s\in \mathbb{R}_+}\frac{|w(t)-w(s)|}{|t-s|^{\gamma}(1+|t| + |s|)^{\delta}}.$$
Let $\mathcal{W}^{\gamma,\delta}$ be the completion of $\mathcal{C}^\infty_0(\mathbb{R}_+)$ w.r.t $\|\cdot\|_{\mathcal{W}^{\gamma,\delta}}$. We also write $\mathcal{W}^{\gamma,\delta}_T$ when we restrict the arguments to the interval $[0,T]$. It should be noted that $\|\cdot\|_{\mathcal{W}^{\gamma,\delta}_T}$ is equivalent to the $\gamma$-H\"{o}lder norm on $[0,T]$ given by

$$|f(0)| + |f|_\gamma,$$
where
$$|f|_\gamma:=\sup_{0\le s < t\le T}\frac{|f(t)-f(s)|}{|t-s|^\gamma},~\frac{1}{2} < \gamma < 1.$$
Moreover, $\mathcal{W}^{\gamma,\delta}_T$ is a separable Banach space. Let $\lambda = (\lambda_i)_{i=1}^\infty$ be the sequence of strictly positive eigenvalues of $Q$. In addition to $\text{trace}~Q  = \sum_{i\ge 1}\lambda_i < \infty$, let us assume $\sum_{i\ge 1}\sqrt{\lambda_i} < \infty$. Let $\mathcal{W}^{\gamma,\delta,\infty}_{\lambda,T}$ be the vector space of functions $g:\mathbb{N}\rightarrow\mathcal{W}^{\gamma,\delta}_T$ such that

$$\|g\|_{\mathcal{W}^{\gamma,\delta,\infty}_{\lambda,T}}:=\sum_{i= 1}^\infty \sqrt{\lambda_i}\|g^i\|_{\mathcal{W}^{\gamma,\delta}_T} < \infty.$$
Clearly, $\mathcal{W}^{\gamma,\delta,\infty}_{\lambda,T}$ is a normed space.
\begin{lemma}
$\mathcal{W}^{\gamma,\delta,\infty}_{\lambda,T}$ is a separable Banach space equipped with the norm $\| \cdot\|_{\mathcal{W}^{\gamma,\delta,\infty}_{\lambda,T}}$.
\end{lemma}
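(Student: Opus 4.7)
The plan is to recognize $\mathcal{W}^{\gamma,\delta,\infty}_{\lambda,T}$ as the weighted $\ell^{1}$-type direct sum $\bigoplus_{i\ge 1} \mathcal{W}^{\gamma,\delta}_T$ with weights $\sqrt{\lambda_i}>0$, and then transfer completeness and separability from the factors. The norm axioms (positive definiteness, homogeneity, triangle inequality) are immediate from the corresponding properties of $\|\cdot\|_{\mathcal{W}^{\gamma,\delta}_T}$ together with the positivity of all $\sqrt{\lambda_i}$, so the only substantive work is completeness and separability.

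For completeness, I would take a Cauchy sequence $(g_n)_{n\ge1}\subset \mathcal{W}^{\gamma,\delta,\infty}_{\lambda,T}$ and observe that for each fixed $i$ the bound $\sqrt{\lambda_i}\,\|g_n^i-g_m^i\|_{\mathcal{W}^{\gamma,\delta}_T}\le \|g_n-g_m\|_{\mathcal{W}^{\gamma,\delta,\infty}_{\lambda,T}}$ makes $(g_n^i)_n$ Cauchy in the separable Banach space $\mathcal{W}^{\gamma,\delta}_T$; here I would appeal to the completeness of $\mathcal{W}^{\gamma,\delta}_T$ already invoked implicitly in the previous paragraph. Denote its limit by $g^i$ and set $g(i):=g^i$. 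To conclude that $g\in \mathcal{W}^{\gamma,\delta,\infty}_{\lambda,T}$ and $g_n\to g$ in the relevant norm, I would use a Fatou-type argument for the series: given $\epsilon>0$, pick $n_0$ so that $\|g_n-g_m\|<\epsilon$ for $n,m\ge n_0$; then for every finite $N$ and every $m\ge n_0$,
$$\sum_{i=1}^N \sqrt{\lambda_i}\,\|g_n^i-g_m^i\|_{\mathcal{W}^{\gamma,\delta}_T}<\epsilon,$$
and letting $m\to\infty$ followed by $N\to\infty$ gives $\|g_n-g\|_{\mathcal{W}^{\gamma,\delta,\infty}_{\lambda,T}}\le \epsilon$. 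Hence $g-g_{n_0}$ and so $g$ lie in the space, and $g_n\to g$.

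For separability, I would fix a countable dense subset $D\subset \mathcal{W}^{\gamma,\delta}_T$ (which exists since the paper has already recorded that $\mathcal{W}^{\gamma,\delta}_T$ is separable) and consider the countable collection $\mathcal{D}$ of finitely supported sequences $i\mapsto d^i$ with $d^i\in D\cup\{0\}$. Given $g\in \mathcal{W}^{\gamma,\delta,\infty}_{\lambda,T}$ and $\epsilon>0$, convergence of the series $\sum_i\sqrt{\lambda_i}\|g^i\|_{\mathcal{W}^{\gamma,\delta}_T}$ yields an $N$ with the tail bounded by $\epsilon/2$; for each $i\le N$ I approximate $g^i$ by $d^i\in D$ so that $\sqrt{\lambda_i}\|g^i-d^i\|_{\mathcal{W}^{\gamma,\delta}_T}<\epsilon/(2N)$, and then the truncated sequence $\tilde g\in \mathcal{D}$ satisfies $\|g-\tilde g\|_{\mathcal{W}^{\gamma,\delta,\infty}_{\lambda,T}}<\epsilon$.

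There is no real obstacle here; the one point where a reader might pause is ensuring that the limit constructed in the completeness argument genuinely belongs to the space (i.e., the weighted series of norms is finite), and this is precisely what the Fatou step above is designed to deliver. Everything else is a mechanical lift of the Banach-space and separability properties of $\mathcal{W}^{\gamma,\delta}_T$ through a convergent weighted $\ell^1$ sum, using $\sqrt{\lambda_i}>0$ to recover coordinate-wise information and $\sum_i\sqrt{\lambda_i}<\infty$ (already assumed) only implicitly via the definition of the norm.
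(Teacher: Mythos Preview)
Your argument is correct and, for completeness, essentially matches the paper's: both extract coordinate-wise limits in $\mathcal{W}^{\gamma,\delta}_T$ and then verify that the limit sequence lies in the space, with your Fatou-style partial-sum argument being the cleaner version of what the paper does.

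For separability the two routes diverge slightly. The paper takes a detour through the $\ell^2$-direct sum $\big[\bigoplus_{j\ge 1}\mathcal{W}^{\gamma,\delta}_T\big]_2$, using Cauchy--Schwarz and $\text{trace}\,Q<\infty$ to obtain the continuous embedding $\|\cdot\|_{\mathcal{W}^{\gamma,\delta,\infty}_{\lambda,T}}\le (\text{trace}\,Q)^{1/2}\|\cdot\|_2$, and then pushes separability of the $\ell^2$-sum through this embedding. Your direct $\ell^1$-style argument---truncate the tail and approximate finitely many coordinates from a countable dense set---is more elementary and avoids introducing the auxiliary $\ell^2$ structure altogether; it also makes transparent that only the summability built into the norm is needed, not the separate hypothesis $\sum_i\lambda_i<\infty$. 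The paper's route, on the other hand, foreshadows the $\ell^2$ comparisons that recur later when relating $\mathcal{W}^{\gamma,\delta,\infty}_{\lambda,T}$ to the Cameron--Martin space.
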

\begin{proof}
Let $\|g_n-g_m\|_{\mathcal{W}^{\gamma,\delta,\infty}_{\lambda,T}}\rightarrow 0$ as $n,m\rightarrow +\infty$. Then, for $\epsilon>0$, there exists $N(\epsilon)$ such that

$$\sum_{i= 1}^\infty \sqrt{\lambda_i}\|g^i_n - g^i_m\|_{\mathcal{W}^{\gamma,\delta}_T} < \epsilon, $$
for every $n,m > N(\epsilon)$. Since $\mathcal{W}^{\gamma,\delta}_T$ is complete, then there exists $g:\mathbb{N}\rightarrow \mathcal{W}^{\gamma,\delta}_T$ defined by $g^i:=\lim_{n\rightarrow \infty} g^i_n$ in $\mathcal{W}^{\gamma,\delta}_T$ for each $i\ge 1$. By construction, we observe that

\begin{eqnarray*}
\sum_{i= 1}^\infty \sqrt{\lambda_i}\|g^i\|_{\mathcal{W}^{\gamma,\delta}_T} &\le& \sum_{i= 1}^\infty \sqrt{\lambda_i}\|g^i - g^i_n\|_{\mathcal{W}^{\gamma,\delta}_T} + \sum_{i= 1}^\infty \sqrt{\lambda_i}\|g^i_n\|_{\mathcal{W}^{\gamma,\delta}_T}\\
& &\\
&\le& \sum_{i= 1}^\infty \sqrt{\lambda_i}\frac{\epsilon}{\sqrt{2^i}} + \sup_{j\ge 1}\|g_j\|_{\mathcal{W}^{\lambda,\gamma,\infty}_{\delta,T}}\\
& &\\
&\le& \Bigg(\sum_{i=1}^\infty \lambda_i\Bigg)^{1/2}\sqrt{2}\epsilon + \sup_{n\ge 1}\| g_n\|_{\mathcal{W}^{\gamma,\delta,\infty}_{\lambda,T}} < \infty.
\end{eqnarray*}
For separability, let $\Big[\oplus_{j=1}^\infty \mathcal{W}^{\gamma,\delta}_T\Big]_{2} = \{f:\mathbb{N}\rightarrow \mathcal{W}^{\gamma,\delta}_T; \|f\|_2< \infty\}$ be the $l_2$-direct sum of the Banach spaces $\mathcal{W}^{\gamma,\delta}_T$, where

$$\|f\|_2 = \Bigg(\sum_{j=1}^\infty \| f^j\|^2_{\mathcal{W}^{\gamma,\delta}_T}\Bigg)^{1/2}.$$
Since $\text{trace}~Q < \infty$, then

\begin{equation}\label{str}
\| \cdot \|_{\mathcal{W}^{\gamma,\delta,\infty}_{\lambda,T}}\le (\text{trace}~Q)^{1/2} \| \cdot\|_2.
\end{equation}
Of course, $\cup_{n\ge 1}\oplus_{j=1}^n\mathcal{W}^{\gamma,\delta}_T\subset \oplus_{j=1}^\infty \mathcal{W}^{\gamma,\delta}_T$ and clearly $\cup_{n\ge 1}\oplus_{j=1}^n\mathcal{W}^{\gamma,\delta}_T$ is a dense subset of $\big[\oplus_{j=1}^\infty \mathcal{W}^{\gamma,\delta}_T\big]_2$. Since $\mathcal{W}^{\gamma,\delta}_{T}$ is separable, the previous argument shows $\Big[\oplus_{j=1}^\infty \mathcal{W}^{\gamma,\delta}_T\Big]_{2}$ is separable and hence (\ref{str}) implies $\mathcal{W}^{\gamma,\delta,\infty}_{\lambda,T}$ is separable as well.
\end{proof}







\begin{lemma}\label{AWSlemma}
If $\gamma\in \big(\frac{1}{2},H\big)$ and $\gamma+\delta \in (H,1)$, then there exists a Gaussian probability measure $\mu^\infty_{\gamma,\delta}$ on $\mathcal{W}^{\gamma,\delta,\infty}_{\lambda,T}$. Therefore, there exists a separable Hilbert space $\mathbf{H}$ continuously imbedded into $\mathcal{W}^{\gamma,\delta,\infty}_{\lambda,T}$ such that $\big(\mathcal{W}^{\gamma,\delta,\infty}_{\lambda,T},\mathbf{H},\mu^\infty_{\gamma,\delta}\big)$ is an abstract Wiener space.
\end{lemma}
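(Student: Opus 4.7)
The plan is to realize $\mu^\infty_{\gamma,\delta}$ as the law of the random sequence $(\beta^i)_{i\geq 1}$ of independent FBMs with Hurst index $H$, viewed as a single $\mathcal{W}^{\gamma,\delta,\infty}_{\lambda,T}$-valued random variable, and then invoke the classical construction of Gross to upgrade this to an abstract Wiener space.

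The first ingredient is a uniform moment bound
\[
\sup_{i\geq 1}\mathbb{E}\|\beta^i\|_{\mathcal{W}^{\gamma,\delta}_T}^p \;\leq\; c_p \;<\; \infty, \qquad p\geq 1.
\]
On the bounded interval $[0,T]$, the norm $\|\cdot\|_{\mathcal{W}^{\gamma,\delta}_T}$ is equivalent to $|f(0)|+|f|_\gamma$, as noted immediately after its definition; combined with the fact that the FBM $\beta^i$ has Gaussian increments of size $|t-s|^H$ and $\gamma<H$, this reduces to a standard application of Kolmogorov's continuity theorem (or a Garsia--Rodemich--Rumsey argument) together with Gaussian hypercontractivity. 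Since all $\beta^i$ are identically distributed standard FBMs, the constant $c_p$ is independent of $i$.

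Next, set $X(\omega):=(\beta^i(\omega))_{i\geq 1}$. By Tonelli's theorem and the assumption $\sum_{i\geq 1}\sqrt{\lambda_i}<\infty$,
\[
\mathbb{E}\|X\|_{\mathcal{W}^{\gamma,\delta,\infty}_{\lambda,T}} \;=\; \sum_{i\geq 1}\sqrt{\lambda_i}\,\mathbb{E}\|\beta^i\|_{\mathcal{W}^{\gamma,\delta}_T} \;\leq\; c_1 \sum_{i\geq 1}\sqrt{\lambda_i} \;<\; \infty,
\]
so $X\in\mathcal{W}^{\gamma,\delta,\infty}_{\lambda,T}$ almost surely; strong measurability follows from separability (proved in the previous lemma) combined with coordinatewise measurability via the Pettis theorem. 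To identify $\mu^\infty_{\gamma,\delta}:=\mathbb{P}\circ X^{-1}$ as a centered Gaussian measure, I introduce the truncations $X^{(n)}:=(\beta^1,\ldots,\beta^n,0,0,\ldots)$. Their laws are manifestly centered Gaussian on the finite $\ell^2$-direct sum of copies of $\mathcal{W}^{\gamma,\delta}_T$, and the continuous embedding into $\mathcal{W}^{\gamma,\delta,\infty}_{\lambda,T}$ recorded in (\ref{str}) transports them to centered Gaussian laws on $\mathcal{W}^{\gamma,\delta,\infty}_{\lambda,T}$. The tail estimate
\[
\mathbb{E}\|X-X^{(n)}\|_{\mathcal{W}^{\gamma,\delta,\infty}_{\lambda,T}} \;\leq\; c_1 \sum_{i>n}\sqrt{\lambda_i} \;\longrightarrow\; 0
\]
then gives convergence in distribution, and weak limits of centered Gaussian measures on a separable Banach space are again centered Gaussian.

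For the final assertion, any centered Gaussian Radon measure $\mu$ on a separable Banach space carries a canonical Cameron--Martin Hilbert space $\mathbf{H}$ --- the reproducing-kernel Hilbert space of its covariance operator --- continuously embedded into the ambient space, and this makes the triple an abstract Wiener space in the sense of Gross. Applying this to $\mu^\infty_{\gamma,\delta}$ completes the argument. The main technical content is concentrated in the scalar moment bound of the first step; once $\|\cdot\|_{\mathcal{W}^{\gamma,\delta}_T}$ is replaced by the equivalent H\"older norm, everything else is soft Gaussian measure theory. The only mild point to watch is ensuring that the coordinate-cylinder $\sigma$-algebra agrees with the Borel $\sigma$-algebra of $\mathcal{W}^{\gamma,\delta,\infty}_{\lambda,T}$, which holds because the space is separable.
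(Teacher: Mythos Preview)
Your proof is correct and follows essentially the same approach as the paper: both show that the sequence $(\beta^i)_{i\ge1}$ almost surely lies in $\mathcal{W}^{\gamma,\delta,\infty}_{\lambda,T}$ via the moment bound $\mathbb{E}\|\beta^1\|_{\mathcal{W}^{\gamma,\delta}_T}<\infty$ combined with $\sum_i\sqrt{\lambda_i}<\infty$, and then invoke the general abstract Wiener space machinery for Gaussian measures on separable Banach spaces. The only cosmetic difference is in how Gaussianity is argued: the paper builds the infinite product measure $\otimes_{j\ge1}\mu_{\gamma,\delta}$ on the product space $\prod_j\mathcal{W}^{\gamma,\delta}_T$ (citing Bogachev for the fact that countable products of Gaussian measures are Gaussian) and then observes it is supported on the weighted subspace, whereas you obtain Gaussianity as a weak limit of the finite truncations $X^{(n)}$; both routes are standard and equivalent here.
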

\begin{proof}
From Lemma 4.1 in \cite{hairerohashi}, we know there exists a probability measure $\mu_{\gamma,\delta}$ on $\mathcal{W}^{\gamma,\delta}_T$ such that the canonical process is a FBM with Hurst parameter $\frac{1}{2} < H <1$ as long as $\gamma\in \big(\frac{1}{2},H\big)$ and $\gamma+\delta \in (H,1)$. Let $\mathcal{W}^{\gamma,\delta,\infty}_T := \prod_{j\ge 1} \mathcal{W}^{\gamma,\delta}_T$ be the countable product of the Banach spaces $\mathcal{W}_T^{\gamma,\delta}$ equipped with the product topology which makes $\mathcal{W}^{\gamma,\delta,\infty}_T$ as a topological vector space. Let $\mu_{\gamma,\delta}^\infty$ be the product probability measure $\otimes_{j\ge 1}\mu_{\gamma,\delta}$ over $\mathcal{W}^{\gamma,\delta,\infty}_T$ equipped with the usual product sigma-algebra. Then, $\mu^\infty_{\gamma,\delta}$ is a Gaussian probability measure (see e.g Example 2.3.8 in \cite{bogachev}). Moreover, we observe

$$\mu^\infty_{\gamma,\delta}\big(\mathcal{W}^{\gamma,\delta, \infty}_{\lambda,T}\big)=1.$$
Indeed, by construction, we can take a sequence of $\mu_{\gamma,\delta}$-independent FBMs $\beta^i; i\ge 1$. By using the modulus of continuity of FBM, it is well-known that $\mathbb{E}_{\mu_{\gamma,\delta}} \| \beta^i\|_{\mathcal{W}^{\gamma,\delta}_T}  = \mathbb{E}_{\mu_{\gamma,\delta}} \| \beta^1\|_{\mathcal{W}^{\gamma,\delta}_T}< \infty$ for every $i\ge 1$. Therefore

$$\mathbb{E}_{\mu^\infty_{\gamma,\delta}}\sum_{i=1}^\infty \lambda_i \| \beta^i\|_{\mathcal{W}^{\gamma,\delta}_T} =\mathbb{E}_{\mu_{\gamma,\delta}}\| \beta^1\|_{\mathcal{W}^{\gamma,\delta}_T} \sum_{i=1}^\infty \lambda_i < \infty,$$
and this proves that $\mu^\infty_{\gamma,\delta}$ is a Gaussian probability measure on the Banach space $\mathcal{W}^{\gamma,\delta, \infty}_{\lambda,T}$.  As a conclusion, this shows that we have an abstract Wiener space structure for $\mu^\infty_{\gamma,\delta}$.


\end{proof}
In the sequel, with a slight abuse of notation, we define $K^*_H: \mathcal{E}\otimes \mathcal{L}_2(U_0,\mathbb{R}) \rightarrow L^2\big([0,T];\mathcal{L}_2(U_0,\mathbb{R})\big)$ as follows

$$K^*_H(h\otimes \varphi)(s):=\int_s^Th(t)\frac{\partial}{\partial t}K_H(t,s)dt \varphi; \quad h\in \mathcal{E}, \varphi\in \mathcal{L}_2(U_0,\mathbb{R}).$$
Clearly,

$$\langle K^*_H(h_1\otimes \varphi_1), K^*_H(h_2\otimes \varphi_2) \rangle_{L^2([0,T];\mathcal{L}_2(U_0,\mathbb{R}))} = \langle (h_1\otimes \varphi_1), (h_2\otimes \varphi_2) \rangle_{\mathcal{H}\otimes \mathcal{L}_2(U_0,\mathbb{R})},$$
for every $h_1,h_2\in \mathcal{E}$ and $\varphi_1,\varphi_2\in \mathcal{L}_2(U_0,\mathbb{R})$ and hence we can extend $K^*_H$ to an isometric isomorphism from $\mathcal{H}\otimes \mathcal{L}_2(U_0,\mathbb{R})$ to $L^2\big([0,T];\mathcal{L}_2(U_0,\mathbb{R})\big)$. Let us also denote $\mathcal{K}_H:L^2\big([0,T];\mathcal{L}_2(U_0,\mathbb{R})\big)\rightarrow \mathbf{H}$ by

$$\mathcal{K}_H f(t):=\int_0^t K_H(t,s) f_s(\sqrt{\lambda_i}e_i)ds;0\le t\le T, i\ge 1,$$
for $f\in L^2\big([0,T];\mathcal{L}_2(U_0,\mathbb{R})\big)$. Here, $\mathbf{H}:= \text{Range}~\mathcal{K}_H$ is the Hilbert space equipped with the norm
$$\| \mathcal{K}_H f\|^2_{\mathbf{H}} := \int_0^T \|f_s\|^2_{\mathcal{L}_2(U_0,\mathbb{R})}ds=\sum_{i=1}^\infty  \|f(\sqrt{\lambda_i}e_i)\|^2_{L^2([0,T];\mathbb{R})} = \sum_{i=1}^\infty  \|\mathcal{K}_{H,1}f(\sqrt{\lambda_i}e_i)\|^2_{\mathbb{H}},$$
where $\mathbb{H}:=\text{Range}~\mathcal{K}_{H,1}$ and

$$\mathcal{K}_{H,1}g(t):=\int_0^t K_H(t,s)g(s)ds; 0\le t\le T,$$
for $g\in L^2([0,T];\mathbb{R})$. We recall (see Th 3.6 \cite{samko}) there exists a constant $C$ such that

$$\| \mathcal{K}_{H,1} g\|_{\mathcal{W}^{\gamma,\delta}_T}\le C \|g\|_{L^2([0,T];\mathbb{R})},$$
for every $g\in L^2([0,T];\mathbb{R})$. Therefore, Cauchy-Schwartz inequality yields

$$\|\mathcal{K}_H f\|_{\mathcal{W}^{\gamma,\delta,\infty}_{\lambda,T}}\le (\text{trace}~Q)^{1/2}\|\mathcal{K}_H f\|_{\mathbf{H}},$$
for every $f\in L^2\big([0,T];\mathcal{L}_2(U_0,\mathbb{R})\big)$. Let us set $\mathbb{P} = \mu^\infty_{\gamma,\delta}$ and $\Omega = \mathcal{W}^{\gamma,\delta,\infty}_{\lambda,T}$. Summing up the above computations, we conclude $\mathbf{H}$ is the Cameron-Martin space associated with $\mathbb{P}$ in Lemma \ref{AWSlemma}, namely

\begin{equation}\label{CMid}
\int_{\Omega}\exp(i \langle \omega, z\rangle_{\Omega,\Omega^*})\mathbb{P}(d\omega) = e^{-\frac{1}{2}\|z\|^2_{\mathbf{H}}}; z\in \Omega^*,
\end{equation}
where $\Omega^*$ is the topological dual of $\Omega$.


By applying Prop. 4.1.3 in \cite{nualart} (see also \cite{kusuoka}), we arrive at the following result. Let $\mathcal{R}_H: = \mathcal{K}_H \circ K^*_H$ be the injection of $\mathcal{L}_2(U_0;\mathcal{H})$ into $\Omega$. We observe $\mathcal{R}_H: \mathcal{L}_2(U_0;\mathcal{H})\rightarrow \Omega$ is a bounded operator with dense range.

\begin{corollary}\label{FMall}
If a random variable $Y:\Omega\rightarrow\mathbb{R}$ is Fr\'echet differentiable along directions in the Cameron-Martin space $\mathbf{H}$, then

$$h\mapsto Y(\omega + \mathcal{R}_H(h))$$
is Fr\'echet differentiable for each $\omega \in \Omega$. Moreover, $Y\in \mathbb{D}^{1,2}_{loc}(\mathbb{R})$ and

$$\nabla Y(\cdot)(\mathcal{R}_H h) = \langle \mathbf{D} Y, h\rangle_{\mathcal{L}_2(U_0,\mathcal{H})},$$
for every $h\in \mathcal{L}_2(U_0,\mathcal{H})$.
\end{corollary}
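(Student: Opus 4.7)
The plan is to reduce Corollary \ref{FMall} to Proposition 4.1.3 in \cite{nualart}, which is the standard bridge between Fréchet differentiability along the Cameron--Martin space of an abstract Wiener space and Malliavin differentiability with respect to the isonormal Gaussian process indexing that Cameron--Martin space.

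First I would fix the algebraic picture. By Lemma \ref{AWSlemma} together with identity (\ref{CMid}), the triple $(\Omega, \mathbf{H}, \mathbb{P})$ is an abstract Wiener space with Cameron--Martin space $\mathbf{H}$. The operator $K^*_H$ is an isometric isomorphism from $\mathcal{L}_2(U_0,\mathcal{H})$ (identified with $\mathcal{H}\otimes \mathcal{L}_2(U_0,\mathbb{R})$) onto $L^2([0,T];\mathcal{L}_2(U_0,\mathbb{R}))$, and $\mathcal{K}_H$ is an isometric identification of this space with $\mathbf{H}$. Consequently $\mathcal{R}_H=\mathcal{K}_H\circ K^*_H$ realizes the canonical unitary identification between the parameter space $\mathcal{L}_2(U_0,\mathcal{H})$ of the isonormal Gaussian process $B$ and the Cameron--Martin space $\mathbf{H}$; in particular
$$\langle \mathcal{R}_H h_1, \mathcal{R}_H h_2\rangle_{\mathbf{H}} = \langle h_1, h_2\rangle_{\mathcal{L}_2(U_0,\mathcal{H})}.$$

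Second, for fixed $\omega\in\Omega$ the map $h\mapsto \omega + \mathcal{R}_H h$ is continuous and affine from $\mathcal{L}_2(U_0,\mathcal{H})$ into $\omega+\mathbf{H}\subset \Omega$. Since $Y$ is Fréchet differentiable along $\mathbf{H}$ by hypothesis, the chain rule yields Fréchet differentiability of $h\mapsto Y(\omega+\mathcal{R}_H h)$ on $\mathcal{L}_2(U_0,\mathcal{H})$ with derivative $h\mapsto \nabla Y(\omega)(\mathcal{R}_H h)$. This also ensures that $Y$ is ray absolutely continuous and stochastically Gâteaux differentiable along $\mathbf{H}$, which are the hypotheses required to invoke \cite{nualart}.

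Third, Proposition 4.1.3 of \cite{nualart} then gives $Y\in \mathbb{D}^{1,2}_{loc}(\mathbb{R})$ and identifies its Malliavin derivative as the element of $\mathcal{L}_2(U_0,\mathcal{H})$ corresponding, via the isometry $\mathcal{R}_H$, to the Cameron--Martin directional derivative of $Y$; this is precisely the stated identity $\nabla Y(\cdot)(\mathcal{R}_H h)=\langle \mathbf{D}Y, h\rangle_{\mathcal{L}_2(U_0,\mathcal{H})}$. The only points requiring care are bookkeeping ones: verifying that the three isometries $K^*_H$, $\mathcal{K}_H$, and the tensor/Hilbert--Schmidt identification compose into a genuinely \emph{unitary} map so that the pairing transported through $\mathcal{R}_H$ matches the Malliavin pairing on $\mathcal{L}_2(U_0,\mathcal{H})$, and ensuring the localization hypothesis of \cite{nualart} is met, which is routine since $Y$ is $\mathcal{F}$-measurable and can be truncated along an exhausting sequence $\Omega_n\uparrow\Omega$.
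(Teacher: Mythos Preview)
Your proposal is correct and follows essentially the same approach as the paper, which simply states that the corollary follows by applying Proposition 4.1.3 in \cite{nualart} (see also \cite{kusuoka}). You have merely made explicit the bookkeeping---the unitary identification $\mathcal{R}_H:\mathcal{L}_2(U_0,\mathcal{H})\to\mathbf{H}$ and the chain rule step---that the paper leaves implicit.
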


\section{Malliavin differentiability of solutions}\label{MalliavinDiffSection}
In this section, we discuss differentiability in the sense of Malliavin calculus (on the probability space defined on Lemma \ref{AWSlemma}) of SPDE mild $\mathbb{F}$-adapted solutions of

\begin{equation}\label{SPDE1}
dX_t = \big(A(X_t) + F(X_t)\big)dt + G(X_t)dB_t,
\end{equation}
in a separable Hilbert space $E$. Here, $\mathbb{F}$ is the filtration generated by an $U$-valued FBM $B$ of the form

$$B_t=\sum_{i=1}^\infty \sqrt{\lambda_i}e_i\beta^i_t; 0\le t\le T.$$
We will assume $\sum_{i=1}^\infty \lambda_i < \infty$ and additional regularity conditions: $\sum_{i=1}^\infty \sqrt{\lambda_i}< \infty$ and $\lambda_i >0$ for all $i\ge 1$. The coefficients $F:E\rightarrow E$ and $G:E\rightarrow \mathcal{L}(U;E)$ will satisfy suitable minimal regularity conditions (see Assumption H1) to ensure well-posedness of (\ref{SPDE1}). Let us define $G_i(x):= G(x) (e_i)$ for an orthonormal basis $(e_i)_{i\ge 1}$ of $U$. Then, we view the solution as

\begin{equation}\label{mildsol}
X_t = S(t)x_0 + \int_0^t S(t-s)F(X_s)ds + \int_0^t S(t-s)G(X_s)dB_s,
\end{equation}
where the $dB$ differential is understood in Young's sense \cite{young,gubinelli1}

$$\int_0^t S(t-s)G(X_s)dB_s = \sum_{i=1}^\infty \sqrt{\lambda_i}\int_0^t S(t-s)G_i(X_s)d\beta^i_s,$$
where the convergence of the sum is understood $\mathbb{P}$-a.s in $E$ in the sense of Lemma \ref{consLemma} below. The solution of (\ref{mildsol}) will take values on $E_\alpha$ for suitable $\alpha >0$.

In order to prove Fr\'echet differentiability, it is crucial to play with linear SPDE solutions living in Banach spaces which are ``sensible" to the H\"{o}lder -type norm of the noise space $\mathcal{W}^{\gamma,\delta}_{\lambda,T}$. For this purpose, we make use of the algebraic/analytic formalism developed by \cite{gubinelli} in the framework of rough paths. Even though we are working with a regular noise $1/2 < H < 1$, the techniques developed by \cite{gubinelli,gubinelli1} allow us to derive better estimates than the classical approach of \cite{young} or fractional calculus given by \cite{nualart1}.

\subsection{Algebraic integration}
For completeness of presentation, let us summarize the basic objects of \cite{gubinelli,gubinelli1} which will be important to us. At first, we fix some notation. For a given normed space $V$ equipped with a norm $\|\cdot\|_V$, $\mathcal{C}_k(V)$ is the set of all continuous functions $g:[0,T]^k\rightarrow V$ such that $g_{t_1\ldots t_k}=0$ whenever $t_i=t_{i+1}$ for some $i\le k-1$. We define $\delta:\mathcal{C}_n(V) \rightarrow \mathcal{C}_{n+1}(V)$ by

$$(\delta F)_{t_1,\ldots t_{n+1}}:=\sum_{j=1}^{n+1}(-1)^j F_{t_1,\ldots \hat{t}_j\ldots t_{n+1}}; F\in \mathcal{C}_n(V),$$
where $\hat{t}_j$ means that this particular argument is omitted. We are mostly going to use the two special cases:
If $F\in \mathcal{C}_1(V)$, then

$$(\delta F)_{ts}= F_t - F_s; (t,s)\in [0,T]^2.$$
If $F \in \mathcal{C}_2(V)$, then

$$(\delta F)_{tsu}= -F_{su} + F_{tu} - F_{ts}; (t,s,u)\in [0,T]^3.$$
We measure the size of the increments by H\"{o}lder regularity defined as follows: For $f\in \mathcal{C}_2(V)$ and $\mu \ge 0$, let us define

$$\|f\|_{\mu,V}:=\sup_{s,t\in [0,T]}\frac{\|f_{st}\|_V}{|t-s|^\mu},$$
and the sets $\mathcal{C}^\mu_2(V):=\{f\in \mathcal{C}_2(V); \|f\|_{\mu,V} < \infty\}$, $\mathcal{C}^\mu_1(V):=\{f\in \mathcal{C}_1(V); \|\delta f\|_{\mu,V} < \infty\}$. In the same way, for $h\in \mathcal{C}_3(V)$, we set

$$\|h\|_{\gamma,\rho,V}:=\sup_{s,u,t\in [0,T]}\frac{\|h_{tus}\|_V}{|t-u|^\rho|s-u|^\gamma},$$

$$\|h\|_{\mu,V}:=\inf\Big\{\sum_{i}\|h_i\|_{\rho_i,\mu-\rho_i,V}; h = \sum_{i}h_i, 0 < \rho_i < \mu  \Big\},$$
where the last infimum is taken over all sequences $\{h_i\in \mathcal{C}_3(V)\}$ such that $h=\sum_{i}h_i$ and for all choices of numbers $\rho_i\in (0,\mu)$. Then, $\|\cdot\|_{\mu,V}$ is a norm on the space $\mathcal{C}_3(V)$, and we set

$$\mathcal{C}^\mu_3(V):=\{h\in \mathcal{C}_3(V); \|h\|_{\mu,V}< \infty\}.$$
Let us denote $\mathcal{Z}\mathcal{C}_k(V):=\mathcal{C}_k(V)\cap\text{Ker}\delta|_{\mathcal{C}_k(V)}$ and $\mathcal{B}\mathcal{C}_k(V):=\mathcal{C}_k(V)\cap\text{Range}~\delta|_{\mathcal{C}_{k-1}(V)}$. We have $\mathcal{Z}\mathcal{C}_{k+1}(V) = \mathcal{B}\mathcal{C}_{k+1}(V)$ for $k\ge 1$.

The convolutional increments will be defined as follows. Let $\mathcal{S}_n = \{(t_1,\ldots, t_n); T \ge t_1 \ge t_2\ge \ldots t_n \ge 0\}$. For a Banach space $V$, $\hat{\mathcal{C}}_n(V)$ denotes the space of continuous functions from $\mathcal{S}_n$ to $V$. We also need a modified version of basic increments distorted by the semigroup as follows: Let $\hat{\delta}: \hat{\mathcal{C}}_{n}(E)\rightarrow \hat{\mathcal{C}}_{n+1}(E)$ given by

 $$(\hat{\delta} F)_{t_1,\ldots t_{n+1}}:=(\delta F)_{t_1,\ldots t_{n+1}} - a_{t_1t_2} F_{t_2\ldots t_n},$$
where $a_{t_1t_2}:= S(t_1-t_2) - \text{Id}$ for $(t_1,t_2)\in \mathcal{S}_2$.

\

\noindent \textit{H\"{o}lder-type space of increments}. We need to define H\"{o}lder-type subspaces of $\hat{\mathcal{C}}_k$ for $1\le k\le 3$ associated with $E_\alpha; \alpha \in \mathbb{R}$. For $\mu \ge 0 $ and $g\in \hat{\mathcal{C}}_2(E_\alpha)$, we define the norm

$$\|g\|_{\mu,\alpha}:=\sup_{t,s\in \mathcal{S}_2}\frac{|g_{ts}|_{\alpha}}{|t-s|^\mu}$$
and the spaces
$$\hat{\mathcal{C}}_2^{\mu,\alpha}:=\{g\in \hat{\mathcal{C}}_2(E_\alpha); \|g\|_{\mu,\alpha} < \infty \},$$

$$\hat{\mathcal{C}}^{\mu,\alpha}_1 :=\{ f\in \hat{\mathcal{C}}_1(E_\alpha); \|\hat{\delta}f\|_{\mu,\alpha} < \infty\},$$

$$\mathcal{C}^{\mu,\alpha}_1 :=\{ f\in \hat{\mathcal{C}}_1(E_\alpha); \|\delta f\|_{\mu,\alpha} < \infty\}.$$


We denote $\hat{\mathcal{C}}^{0,\alpha}_1:=\hat{\mathcal{C}}_1(E_\alpha)$ equipped with the norm

$$\|f\|_{0,\alpha}:=\sup_{0\le t\le T}|f_t|_\alpha.$$

We also equip $\mathcal{C}^{\mu,\alpha}_1$ and $\hat{\mathcal{C}}^{\mu,\alpha}_1$ with the norms given, respectively, by

$$\|f\|_{\mathcal{C}^{\mu,\alpha}_1}:=\|f\|_{0,\alpha} + \|\delta f\|_{\mu,\alpha},$$

$$\|f\|_{\hat{\mathcal{C}}^{\mu,\alpha}_1}:=\|f\|_{0,\alpha} + \|\hat{\delta} f\|_{\mu,\alpha}.$$
We observe that

\begin{equation}\label{embed}
\hat{\mathcal{C}}^{\mu,\mu}_1\hookrightarrow \mathcal{C}^{\mu,0}_1,
\end{equation}
for every $\mu \in (0,1)$ due to the following estimate: For $\lambda \ge \mu$, we have

\begin{equation}\label{interpolationDELTA}
\|\delta f\|_{\mu,0}\le \|\hat{\delta}f\|_{\mu,\lambda} + C|T|^{\lambda-\mu}\|f\|_{0,\lambda},
\end{equation}
for every $f\in \hat{\mathcal{C}}^{\mu,\lambda}_1$ (see Lemma 2.4 in \cite{deya}).

Let us now consider the 3-increment spaces. If $h\in \hat{\mathcal{C}}_3(E_\alpha)$, we define

$$\|h\|_{\eta,\rho,\alpha}:=\sup_{t,u,s\in \mathcal{S}_3}\frac{|h_{tus}|_\alpha}{|t-u|^{\eta}|u-s|^{\rho}},$$

$$\|h\|_{\mu,\alpha}:=\inf\Big\{\sum_{i}\|h_i\|_{\rho_i, \mu-\rho_i,\alpha}; h = \sum_{i} h_i, 0 < \rho_i < \mu\Big\},$$
where the last infimum is taken over all sequences $h_i$ such that $h =\sum_{i} h_i$ and for all choices of the numbers $\rho_i \in (0,\mu)$. One can check $\|\cdot\|_{\mu,\alpha}$ is a norm and we define

$$\hat{\mathcal{C}}^{\mu,\alpha}_3:=\{ h\in \hat{\mathcal{C}}_3(E_\alpha); \|h\|_{\mu,\alpha} < \infty\}.$$
We also need H\"{o}lder-type spaces for operator-valued increments. For $\mu \ge 0$ and $\alpha,\beta\in \mathbb{R}$, we set

$$\hat{\mathcal{C}}^\mu_2 \mathcal{L}^{\beta,\alpha}:= \hat{\mathcal{C}}^\mu_2 \big(\mathcal{L}(E_\beta;E_\alpha)\big) = \{f:\mathcal{S}_2\rightarrow \mathcal{L}(E_\beta;E_\alpha); \|f\|_{\mu,\beta\rightarrow \alpha}< \infty\},$$
where

$$\|f\|_{\mu,\beta\rightarrow\alpha}:=\sup_{t,s\in \mathcal{S}_2}\frac{\|f_{ts}\|_{\beta\rightarrow \alpha}}{|t-s|^\mu}.$$
In order to work with the convolution sewing map (see \cite{gubinelli1}), we define

$$\mathcal{Z}\hat{\mathcal{C}}^{\mu,\alpha}_j: = \hat{\mathcal{C}}^{\mu,\alpha}_j \cap \text{ker}~\hat{\delta}|_{\hat{\mathcal{C}}_j}; j=2,3. $$
We recall $\text{Range}~\hat{\delta}|_{\hat{\mathcal{C}}_j} = \text{Ker}~\hat{\delta}|_{\hat{\mathcal{C}}_{j+1}}; j\ge 1$. Let us define $\mathcal{E}^{\mu,\alpha}_2:= \cap_{\epsilon\le \mu \wedge 1^-} \hat{\mathcal{C}}^{\mu-\epsilon,\alpha+\epsilon}_2$, where $\epsilon\le \mu \wedge 1^-$ means $\epsilon \in [0,\mu] \cap [0,1).$

\

\noindent \textit{Infinite-dimensional regularized noise}: We define

\begin{equation}\label{regnoise}
\mathbf{X}^{x,i}_{ts}:=S(t-s)(\delta x^i)_{ts}\sqrt{\lambda_i}; (t,s)\in \mathcal{S}_2,
\end{equation}
for $x = (x^i)_{i\ge 1}\in \mathcal{W}^{\gamma,\delta,\infty}_{\lambda,T}$ and $1/2 < \gamma < H < 1, \gamma+\delta \in (H,1)$. Let us now collect some important properties of the regularized noise.

\begin{lemma}\label{regNOISEreg}
The following properties hold true: $\mathbf{X}^{x,i}\in \hat{\mathcal{C}}^{\gamma}_2 \mathcal{L}^{\beta,\alpha}$ for $i\ge1$ and for every $(\alpha,\beta)\in \mathbb{R}^2$ such that $\beta \ge \alpha$. Moreover, there exists a constant $C$ which depends on $(\alpha ,\beta)$ such that

\begin{equation}\label{XHolder}
\sup_{(t,s)\in \mathcal{S}_2}\frac{\|\mathbf{X}^{x,i}_{ts}\|_{\beta \rightarrow \alpha}}{|t-s|^\gamma}\le C\sqrt{\lambda_i}\|x^i\|_{\mathcal{W}^{\gamma,\delta}_T},
\end{equation}
for every $i\ge 1$. Moreover, the following algebraic condition holds

\begin{equation}\label{algPR}
(\hat{\delta}\mathbf{X}^{x,i})_{tsu} = (\mathbf{X}^{x,i}a)_{tsu}; (t,s,u)\in \mathcal{S}_3,
\end{equation}
where $a_{su} = S(s-u) - \text{Id}; (s,u)\in\mathcal{S}_2$.
\end{lemma}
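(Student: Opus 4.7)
The plan is to verify Lemma \ref{regNOISEreg} by direct computation, exploiting the decisive simplification that $(\delta x^i)_{ts}$ is scalar-valued, so that $\mathbf{X}^{x,i}_{ts}$ reduces to the rank-one-in-disguise object $\sqrt{\lambda_i}(x^i_t - x^i_s)\,S(t-s)$, viewed as a bounded linear operator on the appropriate scale of Hilbert spaces.

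First I would prove the regularity claim and the estimate \eqref{XHolder}. For any pair $(\alpha,\beta)$ with $\beta \ge \alpha$, the embedding $E_\beta \hookrightarrow E_\alpha$ together with the bound $\|S(r)\|_{\alpha\to\alpha}\le M e^{-\lambda r}$ (valid by the standing hypothesis on the analytic semigroup) produces a constant $C=C(\alpha,\beta)$ such that
$$\|S(r)\|_{\beta\to\alpha} \le C, \qquad r\in [0,T].$$
Since $x^i\in \mathcal{W}^{\gamma,\delta}_T$, one has $|(\delta x^i)_{ts}|\le (1+2T)^\delta \|x^i\|_{\mathcal{W}^{\gamma,\delta}_T}|t-s|^\gamma$, and multiplying with the operator-norm bound above immediately yields both $\mathbf{X}^{x,i}\in \hat{\mathcal{C}}^\gamma_2\mathcal{L}^{\beta,\alpha}$ and the claimed Hölder estimate.

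For the algebraic identity \eqref{algPR}, I would unwind the definition of $\hat{\delta}$ applied to a two-index increment: for $(t,s,u)\in\mathcal{S}_3$,
$$(\hat{\delta}\mathbf{X}^{x,i})_{tsu} = \mathbf{X}^{x,i}_{tu} - \mathbf{X}^{x,i}_{ts} - S(t-s)\,\mathbf{X}^{x,i}_{su}.$$
Now apply three elementary facts: (i) the semigroup property $S(t-s)S(s-u)=S(t-u)$, which is legitimate precisely because $t\ge s\ge u$ on $\mathcal{S}_3$; (ii) the Chasles-type additivity $(\delta x^i)_{tu}=(\delta x^i)_{ts}+(\delta x^i)_{su}$; (iii) the trivial fact that scalars commute past operators. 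After substitution and cancellation the three terms collapse to
$$\sqrt{\lambda_i}(\delta x^i)_{ts}\bigl[S(t-u) - S(t-s)\bigr] = \sqrt{\lambda_i}(\delta x^i)_{ts}\,S(t-s)\bigl[S(s-u)-\operatorname{Id}\bigr] = \mathbf{X}^{x,i}_{ts}\,a_{su},$$
which by the natural product convention is $(\mathbf{X}^{x,i}a)_{tsu}$.

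There is no real analytic obstacle here; the lemma is a structural identity reflecting the algebra of convolutional increments. The only points requiring care are notational: matching the sign convention in $(\delta F)_{t_1 t_2 t_3} = -F_{t_2 t_3}+F_{t_1 t_3}-F_{t_1 t_2}$, recognising that $\operatorname{Id}+a_{t_1 t_2}=S(t_1-t_2)$ so that the correction term in $\hat{\delta}$ effectively replaces one of the $F$'s by $S(t_1-t_2)F_{t_2 t_3}$, and interpreting $(\mathbf{X}^{x,i}a)_{tsu}$ as the operator composition $\mathbf{X}^{x,i}_{ts}\circ a_{su}$.
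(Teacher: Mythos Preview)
Your proposal is correct and follows essentially the same approach as the paper's own proof: both establish the operator-norm bound $\|S(r)\|_{\beta\to\alpha}\le C$ for $\beta\ge\alpha$ (you via the embedding $E_\beta\hookrightarrow E_\alpha$ combined with $\|S(r)\|_{\alpha\to\alpha}\le M$, the paper via the equivalent observation that $(-A)^{\alpha-\beta}$ is bounded when $\alpha<\beta$), and both verify \eqref{algPR} by the same direct expansion using the semigroup property and the additivity of $\delta x^i$. The only cosmetic difference is the order of cancellation in the algebraic step.
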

\begin{proof}
We observe if $\beta \ge \alpha$, then there exists $C_{\alpha,\beta}$ such that $\sup_{0\le r\le T}\|S(r)\|_{\beta\rightarrow\alpha}\le C_{\alpha,\beta} < \infty$. This is obviously true for $\alpha=\beta$. In case, $\beta > \alpha$, we observe if $x\in E_\beta$, then


\begin{eqnarray*}
|S(r)x|_\alpha &=& \|(-A)^\alpha S(r)x\|_E = \| S(r)(-A)^\alpha x\|_E = \|S(r) (-A)^{\alpha-\beta}(-A)^\beta x\|_E\\
& &\\
&\le& \|S(r) (-A)^{\alpha-\beta}\|_{0\rightarrow 0}|x|_\beta,
\end{eqnarray*}
because $(-A)^{\alpha-\beta}$ is a bounded operator on $E$ (see Section 2.6 in \cite{pazy}) whenever $\beta > \alpha$. Therefore, $\|S(r)\|_{\beta\rightarrow \alpha}\le \|S(r) (-A)^{\alpha-\beta}\|_{0\rightarrow 0}\le \|S(r)\|_{0\rightarrow 0}\|(-A)^{\alpha-\beta}\|_{0\rightarrow 0}$ for every $r\in [0,T]$. This proves our first claim. Therefore,

$$\|\mathbf{X}^{x,i}_{ts}\|_{\beta\rightarrow \alpha}\le \|S(t-s)\|_{\beta\rightarrow \alpha}|x^{i}_t-x^i_s|\sqrt{\lambda_i},$$
which implies (\ref{XHolder}). By definition,

\begin{eqnarray}
\nonumber (\hat{\delta} \mathbf{X}^{x,i})_{tsu} &=& \mathbf{X}^{x,i}_{tu} -  \mathbf{X}^{x,i}_{ts} - S(t-s) \mathbf{X}^{x,i}_{su}\\
\nonumber & &\\
\nonumber &=& S(t-u)(x^i_t-x^i_s)\sqrt{\lambda_i} - S(t-s)(x^i_t - x^i_s)\sqrt{\lambda_i}\\
\nonumber & &\\
\nonumber &=& S(t-s) [S(s-u) - \text{Id}](x^i_t-x^i_s)\sqrt{\lambda_i}=\mathbf{X}^{x,i}_{ts}a_{su}=(\mathbf{X}^{x,i}a)_{tsu}; (t,s,u)\in\mathcal{S}_3.
\end{eqnarray}
This shows (\ref{algPR}).
\end{proof}

In the sequel, for a given $\mu > 1$ and $\alpha \in \mathbb{R}$, $\hat{\Lambda}:\mathcal{Z}\hat{\mathcal{C}}^{\mu,\alpha}_3\rightarrow\mathcal{E}^{\mu,\alpha}_2$ is the sewing map as defined by Theorem 3.5 in \cite{gubinelli1}.
\begin{lemma}\label{consLemma}
Let us fix $x = (x^i)_{i\ge 1}\in \mathcal{W}^{\gamma,\delta,\infty}_{\lambda,T}$ where $1/2 < \gamma < H < 1, \gamma+\delta \in (H,1)$. Assume $z = (z^i)_{i\ge 1}$ satisfies $\sup_{i\ge 1}\| z^i\|_{\hat{\mathcal{C}}^{\eta,\beta}_1}< \infty$ for $\eta + \gamma >1$. Then

$$\mathcal{J}_{t_1t_2}(\hat{d}x z):= \sum_{i=1}^\infty \sqrt{\lambda_i} X^{x,i}_{t_1t_2} z^i_{t_2} + \sum_{i=1}^\infty\sqrt{\lambda_i}\hat{\Lambda}\big(X^{x,i} \hat{\delta} z^i\big)_{t_1t_2}$$
satisfies:

\

(i) There exists a constant $C$ such that

$$\| \hat{\delta}\mathcal{J}(\hat{d}x z)\|_{\gamma,\alpha}\le C \| x \|_{\mathcal{W}^{\gamma,\delta,\infty}_{\lambda,T}} \sup_{i\ge 1}\{\|z^i\|_{0,\beta} + \|\hat{\delta} z^i\|_{\eta,\beta}\},$$
for $\alpha\le \beta$.

\

(ii) $$\mathcal{J}_{t_1t_2}(\hat{d}x z) = \sum_{i=1}^\infty \sqrt{\lambda_i}\int_{t_2}^{t_1} S(t_1-u)z^i_u dx^i_u~ \text{in}~E_\alpha,$$
for each $(t_1,t_2)\in \mathcal{S}_2$.
\end{lemma}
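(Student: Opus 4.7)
The plan is to apply the convolutional sewing theorem of \cite{gubinelli1} to each component $i$ separately and then exploit the summability $\|x\|_{\mathcal{W}^{\gamma,\delta,\infty}_{\lambda,T}}<\infty$ to transfer the estimates to the series. Fix $i\geq 1$ and set $h^i_{t_1t_2t_3}:=X^{x,i}_{t_1t_2}(\hat{\delta}z^i)_{t_2t_3}$. Two things must be checked about $h^i$. First, from (\ref{XHolder}) combined with $\|\hat{\delta}z^i\|_{\eta,\beta}\leq\|z^i\|_{\hat{\mathcal{C}}^{\eta,\beta}_1}$, one gets the $(\gamma+\eta)$-Hölder bound
$$\|h^i\|_{\gamma+\eta,\alpha}\leq C\sqrt{\lambda_i}\|x^i\|_{\mathcal{W}^{\gamma,\delta}_T}\sup_{j\geq 1}\|z^j\|_{\hat{\mathcal{C}}^{\eta,\beta}_1},$$
with $\gamma+\eta>1$. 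Second, the algebraic condition $\hat{\delta}h^i=0$: a direct Leibniz-type expansion for $\hat{\delta}$ on the product $X^{x,i}\cdot\hat{\delta}z^i$, together with $\hat{\delta}\hat{\delta}z^i=0$ (which lets one rewrite $(\hat{\delta}z^i)_{t_2t_4}=(\hat{\delta}z^i)_{t_2t_3}+S(t_2-t_3)(\hat{\delta}z^i)_{t_3t_4}$) and the semigroup-corrected relation (\ref{algPR}), produces a complete cancellation. Theorem 3.5 of \cite{gubinelli1} then applies and gives $\hat{\Lambda}h^i\in\mathcal{E}^{\gamma+\eta,\alpha}_2$ with a sewing bound proportional to $\|h^i\|_{\gamma+\eta,\alpha}$.

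For the direct piece $X^{x,i}_{t_1t_2}z^i_{t_2}\in E_\alpha$, inequality (\ref{XHolder}) yields the bound $C\sqrt{\lambda_i}\|x^i\|_{\mathcal{W}^{\gamma,\delta}_T}\|z^i\|_{0,\beta}|t_1-t_2|^\gamma$. After multiplying by the outer $\sqrt{\lambda_i}$ and summing in $i$, absolute convergence in $E_\alpha$ is secured by
$$\sum_{i=1}^\infty\sqrt{\lambda_i}\cdot\sqrt{\lambda_i}\|x^i\|_{\mathcal{W}^{\gamma,\delta}_T}\leq(\text{trace}~Q)^{1/2}\|x\|_{\mathcal{W}^{\gamma,\delta,\infty}_{\lambda,T}}<\infty.$$
Since $\gamma+\eta>\gamma$, the $\hat{\Lambda}h^i$ contribution has strictly better Hölder regularity and is absorbed into the direct-term bound. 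Computing $\hat{\delta}$ of $\mathcal{J}$ term-by-term, the product rule applied to the increment $X^{x,i}z^i$ together with the defining identity $\hat{\delta}\hat{\Lambda}h^i=h^i$ produces an explicit 3-increment whose $\gamma$-norm is controlled by the same quantity, which yields (i).

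For (ii), I would use the Riemann-sum characterization of $\hat{\Lambda}$ (again Theorem 3.5 in \cite{gubinelli1}): for each partition $\Pi$ of $[t_2,t_1]$,
$$X^{x,i}_{t_1t_2}z^i_{t_2}+\sum_{[u,v]\in\Pi}X^{x,i}_{t_1v}(\hat{\delta}z^i)_{vu}\xrightarrow[|\Pi|\to 0]{}X^{x,i}_{t_1t_2}z^i_{t_2}+\hat{\Lambda}h^i_{t_1t_2}\quad\text{in }E_\alpha.$$
Using $X^{x,i}_{t_1v}=\sqrt{\lambda_i}S(t_1-v)(x^i_{t_1}-x^i_v)$ together with a telescoping argument, the left-hand side coincides with the Young-Riemann approximation of $\sqrt{\lambda_i}\int_{t_2}^{t_1}S(t_1-u)z^i_u\,dx^i_u$, which establishes the integral representation componentwise; summing in $i$, absolutely convergent by the estimate of the previous paragraph, gives (ii). The main obstacle is ensuring that the sewing constant from Theorem 3.5 is uniform in $i$, so that summation against $\sum_i\sqrt{\lambda_i}\|x^i\|_{\mathcal{W}^{\gamma,\delta}_T}<\infty$ closes; here one must track the dependence of the sewing constant on the semigroup operator norms between $E_\beta$ and $E_\alpha$, which remain bounded because $\beta\geq\alpha$.
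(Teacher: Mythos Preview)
Your proposal is correct and follows essentially the same route as the paper. The paper's own proof is a one-liner citing Lemma~\ref{regNOISEreg} together with Lemma~3.2, Theorem~3.5 and Corollary~3.6 of \cite{gubinelli1}, and what you have written is precisely the expansion of those references: verify $\mathbf{X}^{x,i}\hat{\delta}z^i\in\mathcal{Z}\hat{\mathcal{C}}^{\gamma+\eta,\alpha}_3$ via the algebraic identity (\ref{algPR}), apply the convolutional sewing map, identify the result with Young--Riemann sums, and sum in $i$ using $\sum_i\sqrt{\lambda_i}\|x^i\|_{\mathcal{W}^{\gamma,\delta}_T}<\infty$.
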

\begin{proof}
The proof is a straightforward application of Lemma \ref{regNOISEreg} above and Lemma 3.2, Th. 3.5 and Cor 3.6 in \cite{gubinelli1}. We omit the details.
\end{proof}

\subsection{The It\^o map}
For a given $y_0=\psi\in E$, the It\^o map $x\mapsto y$ is defined as the solution of the equation
$$y_t = S(t-s)y_s + \int_s^t S(t-u)F(y_u)du + \mathcal{J}_{ts}(\hat{d}x G(y)); (t,s)\in \mathcal{S}_2,$$
which can be rewritten in terms of the increment operator $\hat{\delta}$

\begin{equation}\label{SDEequation}
(\hat{\delta}y)_{ts} = \int_s^t S(t-u)F(y_u)du + \mathcal{J}_{ts}(\hat{d}x G(y)); y_0 = \psi.
\end{equation}
Next, we list the basic assumptions needed for the existence and uniqueness of the SPDE solution. Before that, let us check that we may choose the correct set of parameters.

\begin{lemma}\label{choiceX}
For given $\frac{1}{2} < H < 1$ and $1/2 > \kappa > 1/4$, there exist $\tilde{\gamma}, \kappa_0$ satisfying $\tilde{\gamma} > \kappa_0 > \kappa > \frac{1}{4} $ with $\tilde{\gamma} + \kappa > 1$, $\tilde{\gamma} - \kappa \ge \kappa_0$ such that

\begin{equation}\label{choiceX1}
\mathbf{X}^{x,i}\in \hat{\mathcal{C}}^{\tilde{\gamma}}_2\mathcal{L}^{0,-\kappa} \cap  \hat{\mathcal{C}}^{\kappa_0}_2\mathcal{L}^{\kappa,\kappa},
\end{equation}
for every $i\ge 1$.
\end{lemma}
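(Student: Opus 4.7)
The plan is to split Lemma \ref{choiceX} into two independent tasks: first, picking exponents $\tilde{\gamma}$ and $\kappa_0$ so that the arithmetic inequalities hold; second, invoking Lemma \ref{regNOISEreg} to obtain the two regularity memberships. Both ingredients are needed since the claim combines a compatibility statement about the exponents with a regularity statement for $\mathbf{X}^{x,i}$ in two different operator spaces.

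First I would isolate the arithmetic content. The inequalities $\tilde{\gamma} > \kappa_0 > \kappa$, $\tilde{\gamma} + \kappa > 1$, and $\tilde{\gamma} - \kappa \ge \kappa_0$ combine into the single requirement $\tilde{\gamma} > \max(1-\kappa,\, 2\kappa)$. I would therefore pick $\tilde{\gamma}$ in the open interval $\bigl(\max(1-\kappa, 2\kappa),\, H\bigr)$, where the admissibility of this choice is the implicit calibration between $H$ and $\kappa$, and then set $\kappa_0 := \tilde{\gamma} - \kappa$. This assignment automatically gives $\kappa < \kappa_0 < \tilde{\gamma}$: the strict lower bound comes from $\tilde{\gamma} > 2\kappa$, and the strict upper bound from $\kappa > 0$. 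All stated inequalities now follow by construction.

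For the regularity step I would observe that in both target spaces one has $\beta \ge \alpha$: trivially when $(\beta,\alpha) = (\kappa,\kappa)$, and because $-\kappa \le 0$ when $(\beta,\alpha) = (0,-\kappa)$. Lemma \ref{regNOISEreg} combined with the H\"older estimate (\ref{XHolder}) then delivers $\mathbf{X}^{x,i} \in \hat{\mathcal{C}}^{\gamma}_2 \mathcal{L}^{0,-\kappa}$ and $\mathbf{X}^{x,i} \in \hat{\mathcal{C}}^{\gamma}_2 \mathcal{L}^{\kappa,\kappa}$ at the H\"older regularity $\gamma$ of the underlying path $x^i$. Because Lemma \ref{AWSlemma} allows the abstract Wiener space $\mathcal{W}^{\gamma,\delta,\infty}_{\lambda,T}$ to be built with any $\gamma \in (1/2,H)$, I would fix $\gamma$ strictly between $\tilde{\gamma}$ and $H$, upgrading both inclusions to hold with exponent $\tilde{\gamma}$. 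Finally, since $\kappa_0 < \tilde{\gamma}$, the trivial embedding $\hat{\mathcal{C}}^{\tilde{\gamma}}_2 \mathcal{L}^{\kappa,\kappa} \hookrightarrow \hat{\mathcal{C}}^{\kappa_0}_2 \mathcal{L}^{\kappa,\kappa}$ drops the exponent in the second component, completing the proof.

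The hard part here is not an analytic estimate but the careful calibration of the parameter window: one must guarantee that the interval $\bigl(\max(1-\kappa, 2\kappa),\, H\bigr)$ is non-empty and sits strictly below the H\"older threshold of $x^i$ imposed by the Wiener-space construction. Once $\tilde{\gamma}$ and $\kappa_0$ are chosen inside this window, the two regularity assertions are immediate consequences of Lemma \ref{regNOISEreg} and a standard embedding between H\"older-type spaces of increments.
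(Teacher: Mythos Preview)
Your proof is correct and follows the same strategy as the paper: pick exponents so the arithmetic constraints are satisfied, then invoke Lemma \ref{regNOISEreg} (together with the trivial H\"older embedding on a bounded interval) for the two regularity memberships. Your parameterization---choosing $\tilde{\gamma}$ directly in $\bigl(\max(1-\kappa,2\kappa),H\bigr)$ and setting $\kappa_0 := \tilde{\gamma}-\kappa$---is slightly more transparent than the paper's (which writes $\tilde{\gamma}=H-\epsilon$, $\kappa_0=H-\eta$ and imposes separate conditions on $\epsilon,\eta$), but the underlying argument and the implicit compatibility between $H$ and $\kappa$ that you flag are the same.
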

\begin{proof}
From Lemma \ref{regNOISEreg} and the definition of the spaces $\mathcal{W}^{\gamma,\delta}_T$, there exists a constant $C$ (which does not depend on $i\ge 1$) such that

$$\|\mathbf{X}^{x,i}\|_{H-\epsilon,0\rightarrow -\kappa}\le C \sqrt{\lambda_i}\|x^i\|_{\mathcal{W}^{H-\epsilon,\delta}_T},$$

$$\|\mathbf{X}^{x,i}\|_{H-\eta,\kappa\rightarrow \kappa}\le C \sqrt{\lambda_i}\|x^i\|_{\mathcal{W}^{H-\eta,\delta}_T},$$
for every $\kappa >0$, $\epsilon \in (0,H), \eta\in (0,H)$ and $\delta>0$ such that $H-\epsilon + \delta \in (H,1)$ and $H-\eta + \delta \in (H,1)$. For a given $\frac{1}{2}< H < 1$ and $ \frac{1}{2} > \kappa > \frac{1}{4}$, choose $\epsilon = \epsilon(\kappa,H)\in (0,H)$ in such way that

\begin{equation}\label{choosing1}
H-\epsilon + \kappa > 1.
\end{equation}
Choose $\eta = \eta (\epsilon,H)$ in such way that

\begin{equation}\label{choosing2}
\eta > \frac{1}{2} + \epsilon\quad \text{and}~H-\kappa > \eta.
\end{equation}
Of course, (\ref{choosing2}) implies $\frac{1}{2} + \epsilon < \eta < H-\kappa$. Choose $\delta$ accordingly to these conditions. We then set $\tilde{\gamma} = H-\epsilon$, $\kappa_0 = H-\eta$ where $\epsilon$ and $\eta$ satisfy (\ref{choosing1}) and (\ref{choosing2}). Then, by construction $\tilde{\gamma} + \kappa = H-\epsilon + \kappa > 1$ due to (\ref{choosing1}) and $\tilde{\gamma} > \kappa_0 > \kappa > \frac{1}{4}$ due to (\ref{choosing2}). Moreover, $\eta-\epsilon > \frac{1}{2} > \kappa > \frac{1}{4}$ so that

$$\tilde{\gamma} - \kappa_0 > \frac{1}{2}> \kappa > \frac{1}{4}.$$
Finally, we stress the choice of $\epsilon$ and $\eta$ does not depend on the index $i\ge 1$. This concludes the proof.

\end{proof}

Let us assume the following regularity assumptions on $F,G$:

\

\noindent \textbf{Assumption H1:} For $1/2 > \kappa > 1/4$, we assume that $F, G_i:E_{\kappa}\rightarrow E_{\kappa}$ is Lipschitz (uniformly in $i\ge 1$) and they have linear growth: there exists a constant $C$ such that

$$|G_i(x)|_\kappa\le C (1+ |x|_\kappa),\quad |F(x)|_\kappa\le C (1+ |x|_\kappa); x\in E_\kappa,$$
for every $i\ge 1$.
Furthermore, we suppose that $F,G_i$ can also be seen as
maps from $E$ to $E$, and when considered as such, it holds that $F,G_i$ are Lipschitz (uniformly in $i\ge 1$).

\

In the sequel, recall $\hat{\mathcal{C}}_1^{\kappa,\kappa}$ is the subspace of $\hat{\mathcal{C}}_1(E_\kappa)$ such that

$$\|z\|_{\hat{\mathcal{C}}^{\kappa,\kappa}_1}=\|z\|_{0,\kappa} + \|\hat{\delta}z\|_{\kappa,\kappa}< \infty.$$

In what follows, $x\in \mathcal{W}^{\tilde{\gamma},\delta,\infty}_{\lambda,T}$ where $\tilde{\gamma} + \delta \in (H,1), \frac{1}{2} < \tilde{\gamma} < H$,

\begin{equation}\label{Aindices}
\tilde{\gamma} > \kappa_0 > \kappa > \frac{1}{4},
\end{equation}
and $\tilde{\gamma} + \kappa > 1, \tilde{\gamma}-\kappa \ge \kappa_0$. By Lemma \ref{choiceX}, $\mathbf{X}^{x}$ satisfies (\ref{choiceX1}). By using Assumption H1, the following result is a straightforward application of Theorem 4.3 in \cite{gubinelli1}.

\begin{proposition}\label{exisuniq}
Under Assumption H1 and the choice of indices (\ref{Aindices}), for each $\psi\in E_\kappa$ there exists a unique global solution to (\ref{SDEequation}) in $ \hat{\mathcal{C}}_1^{\kappa,\kappa}$.
\end{proposition}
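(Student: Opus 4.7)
The plan is to obtain the proposition as an application of the rough-path fixed-point theorem (Theorem 4.3 of \cite{gubinelli1}), whose hypotheses have been arranged precisely by Lemma \ref{choiceX} and Assumption H1. Concretely, I set up a Picard iteration for the solution map
\[
\Gamma_\psi(y)_t := S(t)\psi + \int_0^t S(t-u)F(y_u)\,du + \mathcal{J}_{t0}(\hat{d}x\,G(y)),\qquad t\in [0,\tau],
\]
on a closed ball of $\hat{\mathcal{C}}_1^{\kappa,\kappa}([0,\tau])$, show it is a contraction for $\tau$ small, and then patch together the local solutions to cover $[0,T]$.

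The first task is to verify that $\Gamma_\psi$ takes values in $\hat{\mathcal{C}}_1^{\kappa,\kappa}$. Because $\psi\in E_\kappa$ and $\{S(t)\}$ is analytic, $(t\mapsto S(t)\psi)\in \hat{\mathcal{C}}_1^{\kappa,\kappa}$ is standard. For the drift, one uses $\|S(r)\|_{\kappa\to\kappa}\le M$ uniformly on $[0,T]$, together with the Lipschitz and linear-growth assumptions on $F:E_\kappa\to E_\kappa$ from H1, to compute $\hat\delta(\int_0^\cdot S(\cdot-u)F(y_u)du)_{ts} = \int_s^t S(t-u)F(y_u)du$ and bound its $\kappa$-H\"older norm by $C|t-s|\bigl(1+\|y\|_{0,\kappa}\bigr)\le C T^{1-\kappa}\bigl(1+\|y\|_{0,\kappa}\bigr)|t-s|^\kappa$. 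For the stochastic term, the key point is to feed $z^i := G_i\circ y$ into Lemma \ref{consLemma}: by H1 each $G_i:E_\kappa\to E_\kappa$ is Lipschitz uniformly in $i$ and has linear growth, and writing
\[
\hat{\delta}(G_i\circ y)_{ts} = \bigl(G_i(y_t)-G_i(y_s)\bigr) - \bigl(S(t-s)-\mathrm{Id}\bigr)G_i(y_s),
\]
the Lipschitz bound on the first summand together with the standard analytic-semigroup estimate $\|S(t-s)-\mathrm{Id}\|_{\kappa+\kappa_0\to\kappa}\lesssim |t-s|^{\kappa_0}$ applied to the second produces $\sup_{i\ge 1}\|G_i(y)\|_{\hat{\mathcal{C}}_1^{\kappa,\kappa}}\lesssim 1+\|y\|_{\hat{\mathcal{C}}_1^{\kappa,\kappa}}$ (using (\ref{interpolationDELTA}) if one needs to exchange $\hat\delta$ and $\delta$). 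Since $\tilde{\gamma}+\kappa>1$ by (\ref{Aindices}), part (i) of Lemma \ref{consLemma} with $\eta=\beta=\alpha=\kappa$ yields
\[
\|\hat{\delta}\mathcal{J}(\hat{d}x\,G(y))\|_{\tilde{\gamma},\kappa}\le C\,\|x\|_{\mathcal{W}^{\tilde{\gamma},\delta,\infty}_{\lambda,T}}\bigl(1+\|y\|_{\hat{\mathcal{C}}_1^{\kappa,\kappa}}\bigr),
\]
and interpolation down to exponent $\kappa$ on $[0,\tau]$ produces a gain of $\tau^{\tilde{\gamma}-\kappa}>0$.

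The contraction estimate for $\Gamma_\psi(y^1)-\Gamma_\psi(y^2)$ is obtained by repeating the above bounds with the Lipschitz increments of $F$ and $G_i$ in place of the growth bounds; the same $\tau^{\tilde{\gamma}-\kappa}$ factor renders $\Gamma_\psi$ a strict contraction on a suitable ball for $\tau$ depending only on $\|x\|_{\mathcal{W}^{\tilde{\gamma},\delta,\infty}_{\lambda,T}}$, the Lipschitz constants of $F$ and $\{G_i\}$, and $T$ (not on $\|\psi\|_{E_\kappa}$). Banach's fixed-point theorem then gives a unique local solution. Global existence on $[0,T]$ follows by iterating on intervals of constant length $\tau$, and uniqueness on $[0,T]$ is obtained by comparing any two solutions on $[0,\tau]$ and concatenating.

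The main technical obstacle is the uniformity in $i\ge 1$ of all estimates — both for the driver (the trace-class condition $\sum_i\sqrt{\lambda_i}<\infty$ is exactly what makes $\|x\|_{\mathcal{W}^{\tilde{\gamma},\delta,\infty}_{\lambda,T}}$ a meaningful norm and lets the series in $\mathcal{J}$ converge) and for the composition $G_i(y)$, where H1 is invoked with a Lipschitz constant independent of $i$. The role of Lemma \ref{choiceX} is to reconcile two competing H\"older scales: $\tilde{\gamma}+\kappa>1$ for the sewing map $\hat{\Lambda}$ to apply, together with $\tilde{\gamma}-\kappa\ge \kappa_0>\kappa$ so that the fixed point lives in the correct space $\hat{\mathcal{C}}_1^{\kappa,\kappa}$ while still producing the small-time factor $\tau^{\tilde{\gamma}-\kappa}$ needed for contraction.
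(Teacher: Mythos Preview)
Your approach is essentially the same as the paper's: both reduce the proposition to Theorem 4.3 of \cite{gubinelli1}, with Lemma \ref{choiceX} supplying the regularity (\ref{choiceX1}) of the regularised noise and Assumption H1 furnishing the Lipschitz and growth conditions on $F,G_i$ uniformly in $i$. The paper simply cites that theorem, while you sketch its fixed-point argument; one small imprecision is your use of $\|S(t-s)-\mathrm{Id}\|_{\kappa+\kappa_0\to\kappa}$, which would require $G_i(y_s)\in E_{\kappa+\kappa_0}$ (not granted by H1) --- the correct move, implicit in Lemma \ref{consLemma} and made explicit in Lemma \ref{tecdeltaZ}, is to drop to $E$ via $\|(S(t-s)-\mathrm{Id})G_i(y_s)\|_E\lesssim |t-s|^\kappa |G_i(y_s)|_\kappa$ and recover $E_\kappa$-regularity through the sewing map.
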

By noticing (see Lemma \ref{AWSlemma}) that $(\beta^i)_{i\ge 1}\in \mathcal{W}^{\tilde{\gamma}, \delta,\infty}_{\lambda,T}$ a.s, Proposition \ref{exisuniq} yields the following result.

\begin{proposition}
Under Assumption H1 and the choice of indices in (\ref{Aindices}), for each initial condition $x_0\in E_\kappa$, there exists a unique adapted process $X$ which is solution to (\ref{SPDE1}).
\end{proposition}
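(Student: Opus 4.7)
The plan is to promote the pathwise existence and uniqueness result of Proposition \ref{exisuniq} to a measurable, adapted process by evaluating the (deterministic) It\^o map at the random input $B(\omega) = (\beta^i(\omega))_{i \ge 1}$. I would structure this in three steps: (i) verify that the random input lies in the correct path space almost surely, (ii) transfer pathwise existence/uniqueness to an $\Omega$-measurable process, and (iii) establish adaptedness and identify the resulting process with the mild formulation (\ref{SPDE1}).

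\textbf{Step 1 (pathwise evaluation).} By Lemma \ref{AWSlemma}, $\mathbb{P} = \mu^\infty_{\tilde{\gamma},\delta}$ concentrates on $\mathcal{W}^{\tilde{\gamma},\delta,\infty}_{\lambda,T}$, so there exists an event $\Omega_0$ of full measure on which $B(\omega)$ is a well-defined element of that Banach space and Lemma \ref{choiceX} applies to its regularized noise $\mathbf{X}^{B(\omega)}$. For each $\omega \in \Omega_0$, Proposition \ref{exisuniq} yields a unique $X(\omega) \in \hat{\mathcal{C}}^{\kappa,\kappa}_1$ solving (\ref{SDEequation}) with input $x = B(\omega)$ and initial condition $x_0 \in E_\kappa$. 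Off $\Omega_0$ we set $X \equiv x_0$ to obtain a process defined on all of $\Omega$.

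\textbf{Step 2 (measurability).} One inspects the construction of the It\^o map: the fixed-point procedure of Theorem 4.3 in \cite{gubinelli1} produces $X$ as a limit of Picard iterates, each of which depends continuously on $\mathbf{X}^{x}$ via the sewing operator $\hat{\Lambda}$ (a bounded linear map) and on $x$ through Lemma \ref{consLemma}. In particular, the solution map $x \mapsto X$ is continuous from $\mathcal{W}^{\tilde{\gamma},\delta,\infty}_{\lambda,T}$ into $\hat{\mathcal{C}}^{\kappa,\kappa}_1$. Since $\omega \mapsto B(\omega)$ is Borel measurable into $\mathcal{W}^{\tilde{\gamma},\delta,\infty}_{\lambda,T}$ by Lemma \ref{AWSlemma}, the composition $\omega \mapsto X(\omega)$ is Borel measurable as well, and for every $t$, $X_t \colon \Omega \to E_\kappa$ is a bona fide random variable.

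\textbf{Step 3 (adaptedness and mild form).} For adaptedness, I would exploit the fact that (\ref{SDEequation}) is a fixed-point equation for the restriction of the path to $[0,t]$: truncating every ingredient to $[0,t]$ leaves the equation invariant, so $X_t$ depends only on $(B_s)_{0 \le s \le t}$ and hence is $\mathcal{F}_t$-measurable. Concretely, replay the Picard iteration on $[0,t]$: each iterate is a continuous functional of $(B_s)_{s\le t}$ (the Young/sewing integrals $\mathcal{J}_{uv}(\hat{d}B\, G(\cdot))$ involve only increments on $[0,t]$), and uniqueness guarantees the limit agrees with the restriction of the global solution. To finish, I would identify $\mathcal{J}_{ts}(\hat{d}B\, G(X))$ with the mild stochastic integral $\int_s^t S(t-u)G(X_u)\,dB_u$. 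On $\Omega_0$ this is exactly what part (ii) of Lemma \ref{consLemma} provides, interpreted termwise as a Young integral against each component $\beta^i$; combined with the Bochner integral $\int_s^t S(t-u)F(X_u)\,du$, this yields (\ref{mildsol}).

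\textbf{Main obstacle.} The routine parts are Steps 1 and the deterministic invocation of Proposition \ref{exisuniq}. The delicate point is Step 3: one must justify the termwise interpretation of the convolutional rough integral as a mild stochastic integral driven by the trace-class FBM $B$ and confirm that uniqueness in the adapted class is inherited from pathwise uniqueness (so that no spurious non-adapted solutions arise). This ultimately rests on the continuity of the solution map together with the causal structure of $\hat{\Lambda}$, but it is the place where pathwise and probabilistic formulations must be reconciled carefully.
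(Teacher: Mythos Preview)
Your proposal is correct and follows the same approach as the paper: the paper's entire argument is the single sentence preceding the proposition, namely that $(\beta^i)_{i\ge 1}\in \mathcal{W}^{\tilde{\gamma},\delta,\infty}_{\lambda,T}$ a.s.\ by Lemma~\ref{AWSlemma}, whence Proposition~\ref{exisuniq} applies pathwise. Your Steps~2--3 (measurability via continuity of the It\^o map, adaptedness via causality of the fixed-point construction, and identification with the mild form via Lemma~\ref{consLemma}(ii)) are exactly the details one would supply to make that sentence rigorous, but the paper leaves them implicit.
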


\subsection{Fr\'echet differentiability}
Let us now devote our attention to the Fr\'echet differentiability of the It\^o map
$$\Phi:\mathcal{W}^{\tilde{\gamma},\delta,\infty}_{\lambda,T}\rightarrow \hat{\mathcal{C}}^{\kappa,\kappa}_1\quad x\mapsto y,$$
where $y$ is the mild solution of (\ref{SDEequation}) driven by $x\in \mathcal{W}^{\tilde{\gamma},\delta,\infty}_{\lambda,T}$ and the indices $\tilde{\gamma}, \delta, \kappa_0,\kappa$ satisfy (\ref{Aindices}). Then, the Fr\'echet derivative is a mapping

$$\nabla \Phi:\mathcal{W}^{\tilde{\gamma},\delta,\infty}_{\lambda,T}\rightarrow
\mathcal{L}\big(\mathcal{W}^{\tilde{\gamma},\delta,\infty}_{\lambda,T};\hat{\mathcal{C}}^{\kappa,\kappa}_1\big).$$
The importance of Fr\'echet differentiability lies on the following argument: Once we have Fr\'echet differentiability of the It\^o map $x\mapsto y$, we shall use the Fr\'echet derivative chain rule to infer that $\langle X_t, h\rangle_E$ is Fr\'echet differentiable along the direction of the Cameron-Martin space $\mathbb{H}$ for a given $h\in E$ and $t\in [0,T]$. Hence, Corollary \ref{FMall} implies
$$\langle X_t, h\rangle_E\in \mathbb{D}^{1,2}_{loc}(\mathbb{R}).$$
Then, we must use Lemma \ref{weakCH} and try to conclude a representation. We follow the idea contained in the work of Nualart and Saussereau \cite{nualart3}. At first, we list a set of assumptions on the vector fields which will be important in this section.

\

\noindent \textbf{Assumption A1:} The vector fields, $G_i,F:E_\kappa\rightarrow E_\kappa$ are Fr\'echet differentiable and also differentiable when considering from $E$ to $E$. Moreover,

$$\sup_{i\ge 1}\sup_{x\in E_\kappa} \|\nabla G_i(x)\|_{\kappa\rightarrow \kappa} + \sup_{x\in E_\kappa}\|\nabla F(x)\|_{\kappa\rightarrow \kappa}< \infty,$$
$$\sup_{i\ge 1}\sup_{x\in E} \|\nabla G_i(x)\| + \sup_{x\in E}\|\nabla F(x)\| < \infty.$$

\

\noindent \textbf{Assumption A2:}

$$ \sup_{i\ge 1}\sup_{g\in E}\|\nabla^{(2)} G_i(g)\|_{(2),q\rightarrow q} + \sup_{f\in E}\|\nabla^{(2)} F(f)\|_{(2),\kappa\rightarrow\kappa} < \infty,$$
for $q\in \{0,\kappa\}$ and there exists a constant $C$ such that
$$\sup_{i\ge 1}\|\nabla G_i(f) -\nabla G_i(g)\| + \sup_{i\ge 1}\|\nabla^{(2)}G_i(f) -\nabla^{(2)}G_i(g)\big\|_{(2),0\rightarrow 0}\le C \|f-g\|_E,$$
for every $f,g\in E$.

\


At first, it is necessary to investigate flow properties of linear equations. We start with the following corollary whose proof is entirely analogous to Proposition \ref{exisuniq}, so we omit the details.

\begin{corollary}\label{linearequation1}
Suppose $F,G$ satisfy Assumptions A1-H1 and let us fix $(x,y)\in \mathcal{W}^{\tilde{\gamma},\delta,\infty}_{\lambda,T}\times \hat{\mathcal{C}}^{\kappa,\kappa}_1$ and $t_0\in [0,T]$. Then, for every $\eta \in \hat{\mathcal{C}}^{\kappa,\kappa}_1$,

$$v_t = \eta_t + \int_{t_0}^t S(t-s)\nabla F(y_s)v_sds + \mathcal{J}_{tt_0}(\hat{d}x\nabla G (y)v)$$
admits a unique solution in $v\in \hat{\mathcal{C}}^{\kappa,\kappa}_1$ on the interval $[t_0,T]$.
\end{corollary}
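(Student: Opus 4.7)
The plan is to set this up as a Banach fixed-point problem on the space $\hat{\mathcal{C}}^{\kappa,\kappa}_1$, restricted first to a small time interval $[t_0, t_0+\tau]$, and then to extend to the full interval $[t_0,T]$ exploiting the fact that the equation is \emph{linear} in the unknown $v$. Specifically, I would define the affine operator
\[
\Gamma(v)_t := \eta_t + \int_{t_0}^t S(t-s)\nabla F(y_s) v_s\,ds + \mathcal{J}_{tt_0}\big(\hat{d}x\, \nabla G(y)v\big),
\]
for $t\in [t_0, t_0+\tau]$, and show that $\Gamma$ maps $\hat{\mathcal{C}}^{\kappa,\kappa}_1([t_0,t_0+\tau])$ into itself and is a strict contraction for $\tau$ small enough.

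For the stability step, the drift $\int_{t_0}^t S(t-s)\nabla F(y_s) v_s\,ds$ is handled with the standard analytic-semigroup estimate $\|S(r)\|_{0\to\kappa}\le C r^{-\kappa}$, and Assumption A1 bounds $\|\nabla F(y_s)\|_{\kappa\to\kappa}$ uniformly; this gives a contribution of order $\tau^{1-\kappa}\|v\|_{\hat{\mathcal{C}}^{\kappa,\kappa}_1}$ in the $\hat{\mathcal{C}}^{\kappa,\kappa}_1$ norm. For the rough integral term, I would invoke Lemma \ref{consLemma}(i) with $\beta=\kappa$ and the controlled path $z^i_s := \nabla G_i(y_s) v_s$. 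Assumption A1 together with Assumption A2 bound $\sup_i \|\nabla G_i(y)\|_{\kappa\to\kappa}$ and the incremental term $\hat{\delta}(\nabla G_i(y) v)$ in $\mathcal{C}^{\kappa,\kappa}_2$ (using the product formula $\hat{\delta}(Mv) = (\delta M) v + M (\hat{\delta} v) - a\, M v$, and the fact that the composition with $y\in \hat{\mathcal{C}}^{\kappa,\kappa}_1$ preserves H\"older regularity via the chain rule in Fr\'echet sense), yielding a bound of order $\tau^{\kappa_0-\kappa}\|x\|_{\mathcal{W}^{\tilde{\gamma},\delta,\infty}_{\lambda,T}}(1+\|v\|_{\hat{\mathcal{C}}^{\kappa,\kappa}_1})$ by the parameter constraints in Lemma \ref{choiceX}.

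Because $\Gamma$ is affine and the Lipschitz constants controlling the $v$-dependent terms are the same as the terms themselves up to the inhomogeneity $\eta$, the contraction estimate for $\Gamma(v)-\Gamma(\tilde v)$ carries the same small factor $\tau^{\kappa_0-\kappa}+\tau^{1-\kappa}$, which can be made strictly less than $1$ by choosing $\tau$ small depending on $\|x\|_{\mathcal{W}^{\tilde{\gamma},\delta,\infty}_{\lambda,T}}$, $\|y\|_{\hat{\mathcal{C}}^{\kappa,\kappa}_1}$, and the A1-A2 bounds on the vector fields, but crucially \emph{not} on $\|v\|$ or $\|\eta\|$. Banach's fixed-point theorem then delivers a unique solution on $[t_0,t_0+\tau]$.

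The final step is to iterate. Since $\tau$ depends only on the driver and the fixed path $y$, one can split $[t_0,T]$ into finitely many intervals $[t_k,t_{k+1}]$ of length at most $\tau$ and solve the equation on each in turn, taking as new ``initial inhomogeneity'' on $[t_k,t_{k+1}]$ the restart $\eta^{(k)}_t := v_{t_k} + (\eta_t - \eta_{t_k})$ plus the tail of the already-solved terms, which remains in $\hat{\mathcal{C}}^{\kappa,\kappa}_1$. Linearity of the equation guarantees no blow-up between intervals (no Gronwall-type a priori bound is needed, only uniqueness), so patching yields a unique $v\in \hat{\mathcal{C}}^{\kappa,\kappa}_1$ on all of $[t_0,T]$. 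The main technical obstacle I expect is bookkeeping the ``controlled path'' structure of $\nabla G(y)v$ to apply Lemma \ref{consLemma}, i.e.\ verifying that $\hat{\delta}(\nabla G_i(y)v)$ genuinely lies in $\mathcal{C}^{\eta,\kappa}_2$ with $\eta+\tilde{\gamma}>1$; this is where Assumption A2's uniform second-derivative bound on $G_i$ is essential, exactly as in the nonlinear case treated in Proposition \ref{exisuniq}.
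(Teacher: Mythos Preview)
Your proposal is correct and takes essentially the same approach as the paper, which simply states that the proof is ``entirely analogous to Proposition \ref{exisuniq}'' (i.e., the Gubinelli--Tindel fixed-point argument) and omits the details; you have spelled out exactly that contraction-and-patching argument. Your observation that Lipschitz control of $\nabla G_i$ (part of A2) is needed to make $s\mapsto \nabla G_i(y_s)v_s$ a legitimate integrand for Lemma~\ref{consLemma} is well taken---the paper's hypothesis list A1--H1 is slightly loose here, but the corollary is only invoked under A2 anyway (see Lemmas~\ref{menosK} and the surrounding results).
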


The following lemma plays a key role on the Fr\'echet differentiability of the It\^o map.

\begin{lemma}\label{tecdeltaZ}
Let $[s_0,t_0]$ be a subset of $[0,T]$, let $Z_t = \sum_{i\ge 1}\sqrt{\lambda_i}\int_{s_0}^t S(t-s) z^i_s dx^i_s; s_0\le t\le t_0$ where $x\in \mathcal{W}^{\tilde{\gamma},\delta,\infty}_{\lambda,T}$ and assume $\sup_{i\ge1}\|z^i\|_{0,\eta} + \sup_{i\ge 1}\| \hat{\delta} z^i\|_{\zeta,\eta-\alpha} < \infty$ on the interval $[s_0,t_0]$ for some $\eta \ge 0$ where $0\le \alpha \le \text{min}(\zeta,\eta)$, $0 \le \zeta \le \tilde{\gamma}$ and $\tilde{\gamma} + \zeta >1$.  Then, there exists a constant $C$ which depends on $\eta$ and $\tilde{\gamma}$ such that

\begin{equation}\label{withgamma}
\|\hat{\delta}Z\|_{\tilde{\gamma},\eta}\le C \|x\|_{\mathcal{W}^{\tilde{\gamma},\delta,\infty}_{\lambda,T}}\Big\{\sup_{i\ge1}\|z^i\|_{0,\eta} + |t_0-s_0|^{\zeta-\alpha}\sup_{i\ge 1}\| \hat{\delta} z^i\|_{\zeta,\eta-\alpha}\Big\},
\end{equation}

\begin{equation}\label{withzeta}
\|\hat{\delta}Z\|_{\zeta,\eta}\le C \|x\|_{\mathcal{W}^{\tilde{\gamma},\delta,\infty}_{\lambda,T}}\Big\{|t_0-s_0|^{\tilde{\gamma}-\zeta}\sup_{i\ge1}\|z^i\|_{0,\eta} + |t_0-s_0|^{\tilde{\gamma}-\alpha}\sup_{i\ge 1}\| \hat{\delta} z^i\|_{\zeta,\eta-\alpha}\Big\},
\end{equation}
on the interval $[s_0,t_0]$.
\end{lemma}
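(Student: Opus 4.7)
The plan is to use the convolutional Young representation provided by Lemma \ref{consLemma}. Since $\mathcal{J}(\hat{d}xz)$ lies in the kernel of $\hat{\delta}$ and $Z_{\cdot}=\mathcal{J}_{\cdot,s_0}(\hat{d}xz)$, one has on the sub-interval $[s_0,t_0]$ that
\[
(\hat{\delta}Z)_{t_1t_2}=\mathcal{J}_{t_1t_2}(\hat{d}xz)=\sum_{i\ge 1}\sqrt{\lambda_i}\,\mathbf{X}^{x,i}_{t_1t_2}z^i_{t_2}\;+\;\sum_{i\ge 1}\sqrt{\lambda_i}\,\hat{\Lambda}\bigl(\mathbf{X}^{x,i}\hat{\delta}z^i\bigr)_{t_1t_2},
\]
and I would bound the two sums separately in $|\cdot|_\eta$.

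For the first sum, Lemma \ref{regNOISEreg} applied with $\beta=\alpha=\eta$ gives $\|\mathbf{X}^{x,i}_{t_1t_2}\|_{\eta\to\eta}\le C\sqrt{\lambda_i}\,\|x^i\|_{\mathcal{W}^{\tilde{\gamma},\delta}_T}|t_1-t_2|^{\tilde{\gamma}}$, so that summation in $i$ together with $\sum_i\lambda_i\|x^i\|_{\mathcal{W}^{\tilde{\gamma},\delta}_T}\le\sqrt{\operatorname{trace}Q}\,\|x\|_{\mathcal{W}^{\tilde{\gamma},\delta,\infty}_{\lambda,T}}$ (because $\sqrt{\lambda_i}\le\sqrt{\operatorname{trace}Q}$) yields the first term in (\ref{withgamma}). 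For the second sum the strategy is to estimate the 3-increment $(\mathbf{X}^{x,i}\hat{\delta}z^i)_{tus}:=\mathbf{X}^{x,i}_{tu}(\hat{\delta}z^i)_{us}$ in a suitable space on which the convolutional sewing map $\hat{\Lambda}$ of \cite{gubinelli1} is bounded. The crucial input is the smoothing estimate $\|S(t-u)\|_{\eta-\alpha\to\eta}\le C(t-u)^{-\alpha}$, which combined with (\ref{XHolder}) at level $\eta-\alpha\to\eta$ and the H\"older bound $|(\hat{\delta}z^i)_{us}|_{\eta-\alpha}\le\|\hat{\delta}z^i\|_{\zeta,\eta-\alpha}|u-s|^\zeta$ produces
\[
|(\mathbf{X}^{x,i}\hat{\delta}z^i)_{tus}|_\eta\le C\sqrt{\lambda_i}\|x^i\|_{\mathcal{W}^{\tilde{\gamma},\delta}_T}\|\hat{\delta}z^i\|_{\zeta,\eta-\alpha}\,|t-u|^{\tilde{\gamma}-\alpha}|u-s|^\zeta.
\]
Since $\alpha\le\zeta$ and $\tilde{\gamma}+\zeta>1$, the composite exponent $\mu:=(\tilde{\gamma}-\alpha)+\zeta$ is strictly greater than $1$, so Theorem 3.5 of \cite{gubinelli1} yields $|\hat{\Lambda}(\mathbf{X}^{x,i}\hat{\delta}z^i)_{t_1t_2}|_\eta\le C\sqrt{\lambda_i}\|x^i\|_{\mathcal{W}^{\tilde{\gamma},\delta}_T}\|\hat{\delta}z^i\|_{\zeta,\eta-\alpha}|t_1-t_2|^{\mu}$. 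Factoring out $|t_1-t_2|^{\tilde{\gamma}}$ leaves a residual $|t_1-t_2|^{\zeta-\alpha}\le|t_0-s_0|^{\zeta-\alpha}$, and the same summation in $i$ as before completes the proof of (\ref{withgamma}).

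Finally, (\ref{withzeta}) follows from (\ref{withgamma}) via the elementary interpolation $\|\hat{\delta}Z\|_{\zeta,\eta}\le|t_0-s_0|^{\tilde{\gamma}-\zeta}\|\hat{\delta}Z\|_{\tilde{\gamma},\eta}$, valid whenever $\zeta\le\tilde{\gamma}$; multiplying the right-hand side of (\ref{withgamma}) by $|t_0-s_0|^{\tilde{\gamma}-\zeta}$ recovers exactly (\ref{withzeta}). The main technical obstacle I anticipate is the sewing-map step: one has to verify that $\mathbf{X}^{x,i}\hat{\delta}z^i$ lies in the domain of $\hat{\Lambda}$, which in the convolutional framework of \cite{gubinelli1} requires the algebraic condition (\ref{algPR}) together with identities for the $\hat{\delta}$-coboundary of 2-increments, and at the same time one must track the pair of indices $(\alpha,\eta)$ so that the semigroup smoothing exactly compensates the regularity gap while preserving the uniform $\sqrt{\lambda_i}$-linearity required for the series in $i$ to be controlled by $\|x\|_{\mathcal{W}^{\tilde{\gamma},\delta,\infty}_{\lambda,T}}$.
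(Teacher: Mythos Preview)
Your overall decomposition and the treatment of the first sum are fine and match the paper. The derivation of (\ref{withzeta}) from (\ref{withgamma}) by the interpolation $\|\hat{\delta}Z\|_{\zeta,\eta}\le |t_0-s_0|^{\tilde{\gamma}-\zeta}\|\hat{\delta}Z\|_{\tilde{\gamma},\eta}$ is also correct.

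However, there is a genuine gap in the sewing step. You estimate the 3-increment directly at spatial level $\eta$ using the semigroup smoothing $\|S(t-u)\|_{\eta-\alpha\to\eta}\le C(t-u)^{-\alpha}$, obtaining exponents $(\tilde{\gamma}-\alpha,\zeta)$, and then claim that $\mu:=(\tilde{\gamma}-\alpha)+\zeta>1$ follows from $\alpha\le\zeta$ and $\tilde{\gamma}+\zeta>1$. This deduction is false: from $\alpha\le\zeta$ one only gets $\mu\ge\tilde{\gamma}$, and $\tilde{\gamma}<1$. For instance $\tilde{\gamma}=0.6$, $\zeta=0.5$, $\alpha=0.3$ satisfies all the hypotheses of the lemma yet gives $\mu=0.8<1$, so Theorem~3.5 of \cite{gubinelli1} does not apply at level $\eta$.

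The paper avoids this by \emph{not} putting the semigroup smoothing into the 3-increment estimate. It bounds $\mathbf{X}^{x,i}\hat{\delta}z^i$ at the lower spatial level $\eta-\alpha$, where $\|\mathbf{X}^{x,i}_{tu}\|_{\eta-\alpha\to\eta-\alpha}\le C\sqrt{\lambda_i}\|x^i\|_{\mathcal{W}^{\tilde{\gamma},\delta}_T}|t-u|^{\tilde{\gamma}}$ has no blow-up, so the 3-increment lies in $\mathcal{Z}\hat{\mathcal{C}}^{\zeta+\tilde{\gamma},\eta-\alpha}_3$ with the guaranteed exponent $\zeta+\tilde{\gamma}>1$. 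Only then does it invoke the smoothing built into the convolutional sewing map (Theorem~3.5 of \cite{gubinelli1}), namely $\|\hat{\Lambda}h\|_{\zeta+\tilde{\gamma}-\epsilon,\theta+\epsilon}\le C\|h\|_{\zeta+\tilde{\gamma},\theta}$, with the choice $\theta=\eta-\alpha$ and $\epsilon=\alpha$. This recovers the estimate at spatial level $\eta$ with temporal exponent $\zeta+\tilde{\gamma}-\alpha$, which is exactly what is needed for (\ref{withgamma}). The point is that the trade-off between spatial and temporal regularity must happen \emph{inside} $\hat{\Lambda}$, after the $>1$ condition has been secured, not in the raw 3-increment bound.
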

\begin{proof}
In the sequel, $C$ is a constant which may differ from line to line. To keep notation simple, without loss of generality, we set $s_0=0, t_0=T$. We observe $(\hat{\delta}Z)_{ts} =\sum_{i\ge 1}\sqrt{\lambda_i}\int_s^t S(t-u) z^i_u dx^i_u $. From the proof of Lemma \ref{consLemma}, we know that

$$\int_s^t S(t-u) z^i_u dx^i_u = \mathbf{X}^{x,i}_{ts} z^i_{s} + \hat{\Lambda}\big( \mathbf{X}^{x,i}\hat{\delta} z^i\big)_{ts}; (t,s)\in \mathcal{S}_2,$$
where $\mathbf{X}^{x,i}\in \hat{\mathcal{C}}^{\tilde{\gamma}}_2\mathcal{L}^{\eta,\eta}$ due to Lemma \ref{regNOISEreg}. Then, checking the proof of Lemma \ref{consLemma}, we have $\mathbf{X}^{x,i}\hat{\delta} z^i\in \mathcal{Z}\hat{\mathcal{C}}^{\zeta+\tilde{\gamma},\theta}_3$ for $\theta\le \eta-\alpha$. Now,

$$\Big|\sum_{i\ge 1}\sqrt{\lambda_i}\int_s^t S(t-u) z^i_u dx^i_u\Big|_{\eta}\le \sum_{i\ge 1}\sqrt{\lambda_i}\Big|\int_s^t S(t-u) z^i_u dx^i_u\Big|_{\eta}\le C \sum_{i\ge 1}\sqrt{\lambda_i}\|\mathbf{X}^{x,i}_{ts}\|_{\eta\rightarrow \eta}|z^i_s|_\eta$$
$$+  \sum_{i\ge 1}\sqrt{\lambda_i}\Big|\hat{\Lambda}\big( \mathbf{X}^{x,i}\hat{\delta} z^i\big)_{ts}\Big|_{\eta}$$
\begin{equation}\label{difinter}
\le C \|x\|_{\mathcal{W}^{\tilde{\gamma},\delta,\infty}_{\lambda,T}} |t-s|^{\tilde{\gamma}}\sup_{i\ge 1}\|z^i\|_{0,\eta} + \sum_{i\ge 1}\sqrt{\lambda_i} \Big|\hat{\Lambda}\big( \mathbf{X}^{x,i}\hat{\delta} z^i\big)_{ts}\Big|_\eta; (t,s)\in\mathcal{S}_2.
\end{equation}
By applying the ``convolution" Sewing lemma (Th 3.5 in \cite{gubinelli1}), there exists a constant $C_{\zeta+ \tilde{\gamma}}$ such that

$$\| \hat{\Lambda}\mathbf{X}^{x,i}\hat{\delta} z^i\|_{\zeta+\tilde{\gamma}-\epsilon,\theta+\epsilon}\le C_{\zeta+\tilde{\gamma},\epsilon}\|\mathbf{X}^{x,i}\hat{\delta} z^i\|_{\zeta+\tilde{\gamma},\theta},$$
for every $\epsilon \in [0,\zeta+ \tilde{\gamma}] \cap [0,1)$. Take $\theta= \eta-\alpha$ and $\epsilon = \alpha$. Then,


\begin{equation}\label{difinter1}
\Big|\hat{\Lambda}\big( \mathbf{X}^{x,i}\hat{\delta} z^i\big)_{ts}\Big|_{\eta}\le C_{\zeta+\tilde{\gamma},\epsilon}\|\mathbf{X}^{x,i}\hat{\delta} z^i\|_{\zeta+\tilde{\gamma},\theta}|t-s|^{\zeta + \tilde{\gamma}-\epsilon}.
\end{equation}
On the other hand, $(\mathbf{X}^{x,i}\hat{\delta} z^i)$ is a 3-increment, where

$$\| \mathbf{X}^{x,i}\hat{\delta} z^i\|_{\zeta + \tilde{\gamma},\eta-\alpha}=\inf\Big\{\sum_{j}\|h_j\|_{\rho_j, \zeta+\tilde{\gamma}-\rho_j,\eta-\alpha};  \mathbf{X}^{x,i}\hat{\delta} z^i= \sum_{j} h_j, 0 < \rho_j <\zeta + \tilde{\gamma} \Big\},$$
and the last infimum is taken over all sequences $h_j$ such that $\mathbf{X}^{x,i}\hat{\delta} z^i =\sum_{j} h_j$ and for all choices of the numbers $\rho_j \in (0,\zeta + \tilde{\gamma})$. Here, we recall for any 3-increment $f$, we have

$$\|f\|_{\rho_j,\zeta+\tilde{\gamma}-\rho_j,\eta-\alpha}=\sup_{t,u,s\in \mathcal{S}_3}\frac{|f_{tus}|_{\eta-\alpha}}{|t-u|^{\rho_j}|u-s|^{\zeta+\tilde{\gamma}-\rho_j}}.$$
Take $h_i= \mathbf{X}^{x,i}\hat{\delta} z^i$ and $\rho_j = \tilde{\gamma}$. By definition, $(\mathbf{X}^{x,i}\hat{\delta} z^i)_{tus} = \mathbf{X}^{x,i}_{tu} \hat{\delta} z^i_{us}$, and then

\begin{eqnarray*}
\|\mathbf{X}^{x,i}\hat{\delta} z^i\|_{\zeta + \tilde{\gamma},\eta-\alpha}&\le& \sup_{t,u,s\in \mathcal{S}_3}\frac{|\mathbf{X}^{x,i}_{tu} \hat{\delta} z^i_{us}|_{\eta-\alpha}}{|t-u|^{\tilde{\gamma}}|u-s|^{\zeta}}\\
& &\\
&\le& \sup_{t,u,s\in \mathcal{S}_3}\frac{|\mathbf{X}^{x,i}_{tu}|_{\eta-\alpha\rightarrow \eta-\alpha} |\hat{\delta} z^i_{us}|_{\eta-\alpha}}{|t-u|^{\tilde{\gamma}}|u-s|^{\zeta}}\\
& &\\
&\le& C \|x^i\|_{\mathcal{W}^{\tilde{\gamma},\delta}_T}\|\hat{\delta} z^i\|_{\zeta,\eta-\alpha}.
\end{eqnarray*}
Then, (\ref{difinter1}) yields

\begin{equation}\label{difinter2}
\sum_{i\ge 1}\sqrt{\lambda_i}\Big|\hat{\Lambda}\big( \mathbf{X}^{x,i}\hat{\delta} z^i\big)_{ts}\Big|_\eta\le C_{\zeta+\tilde{\gamma},\alpha}|t-s|^{\zeta+\tilde{\gamma}-\alpha}\|x\|_{\mathcal{W}^{\tilde{\gamma},\delta,\infty}_{\lambda,T}}\sup_{i\ge 1}\|\hat{\delta} z^i\|_{\zeta,\eta-\alpha}.
\end{equation}

Finally, we shall plug (\ref{difinter2}) into (\ref{difinter}) and we conclude the proof of (\ref{withgamma}). By observing (\ref{difinter2}) and (\ref{difinter}), we conclude (\ref{withzeta}).


\end{proof}

\begin{lemma}
Assume that hypotheses H1-A1-A2 hold true. Let $y$ be the solution of (\ref{SDEequation}) with initial condition $\psi$ and driven by $x\in \mathcal{W}^{\tilde{\gamma},\delta,\infty}_{\lambda,T}$. Then, the mapping

$$L: \mathcal{W}^{\tilde{\gamma},\delta,\infty}_{\lambda,T}\times \hat{\mathcal{C}}^{\kappa,\kappa}_1\rightarrow \hat{\mathcal{C}}^{\kappa,\kappa}_1, $$
defined by

$$(x,y)\mapsto L(x,y)_t:=y_t - S_t\psi - \int_0^t S(t-s) F(y_s)ds  - \mathcal{J}_{t0}\big(\hat{d}(x) G(y)\big)$$
is Fr\'echet differentiable. In particular, for each $(x,y)\in \mathcal{W}^{\tilde{\gamma},\delta,\infty}_{\lambda,T}\times \hat{\mathcal{C}}^{\kappa,\kappa}_1$ and $(q,v)\in \mathcal{W}^{\tilde{\gamma},\delta,\infty}_{\lambda,T}\times \hat{\mathcal{C}}^{\kappa,\kappa}_1$, we have

\begin{equation}\label{partial1}
\nabla_1 L(x,y)(q)_t = -\mathcal{J}_{t0}\big(\hat{d}q G(y)\big),
\end{equation}

\begin{equation}\label{partial2}
\nabla_2 L(x,y)(v)_t = v_t - \int_0^t S(t-s)\nabla F(y_s)v_sds  - \mathcal{J}_{t0}\big(\hat{d}x \nabla G(y)v\big); 0\le t\le T.
\end{equation}
Moreover, for each $x\in \mathcal{W}^{\tilde{\gamma},\delta,\infty}_{\lambda,T}$, the mapping $\nabla_2 L(x,\Phi(x)):\hat{\mathcal{C}}^{\kappa,\kappa}_1\rightarrow \hat{\mathcal{C}}^{\kappa,\kappa}_1$ is an homeomorphism.
\end{lemma}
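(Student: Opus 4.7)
The plan is to decompose $L(x,y) = y - S_{\cdot}\psi - L^1(y) - L^2(x,y)$, where $L^1(y)_t := \int_0^t S(t-s)F(y_s)\,ds$ and $L^2(x,y)_t := \mathcal{J}_{t0}(\hat{d}x\,G(y))$. Since the regularized noise $\mathbf{X}^{x,i}_{ts}=S(t-s)(\delta x^i)_{ts}\sqrt{\lambda_i}$ is linear in $x^i$, both $\mathbf{X}^{x,i}z^i$ and $\hat{\Lambda}(\mathbf{X}^{x,i}\hat{\delta}z^i)$ are linear in $x$, so $x\mapsto L^2(x,y)$ is a bounded linear operator from $\mathcal{W}^{\tilde{\gamma},\delta,\infty}_{\lambda,T}$ into $\hat{\mathcal{C}}^{\kappa,\kappa}_1$; the boundedness is the content of Lemma \ref{consLemma}. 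This immediately gives $\nabla_1 L(x,y)(q) = -\mathcal{J}_{\cdot 0}(\hat{d}q\,G(y))$, establishing (\ref{partial1}).

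For the partial derivative in $y$, Assumption A2 ensures that $\nabla^{(2)} F$ and $\nabla^{(2)} G_i$ are uniformly bounded as multilinear maps on $E_\kappa$, so Taylor's formula with integral remainder yields
$$F(y+v) - F(y) - \nabla F(y)v = R^F(y,v),\qquad G_i(y+v) - G_i(y) - \nabla G_i(y)v = R^{G,i}(y,v),$$
with $|R^F(y_t,v_t)|_\kappa + \sup_{i\ge 1}|R^{G,i}(y_t,v_t)|_\kappa \le C |v_t|_\kappa^2$. The remainder contribution from $L^1$ equals $\int_0^{\cdot} S(\cdot - s)R^F(y_s,v_s)\,ds$, and boundedness of $S$ on $E_\kappa$ immediately shows that this term lies in $\hat{\mathcal{C}}^{\kappa,\kappa}_1$ with norm controlled by $C\,T^{1-\kappa}\|v\|_{0,\kappa}^2 = o(\|v\|_{\hat{\mathcal{C}}^{\kappa,\kappa}_1})$. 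The crux is the $L^2$-remainder $\mathcal{J}_{\cdot 0}(\hat{d}x\,R^G(y,v))$: the plan is to apply Lemma \ref{tecdeltaZ} with $z^i=R^{G,i}(y,v)$, $\eta=\kappa$, and a pair $(\zeta,\alpha)$ with $\tilde{\gamma}+\zeta>1$ and $\alpha\le\min(\zeta,\kappa)$. The pointwise bound gives $\sup_i\|z^i\|_{0,\kappa}\lesssim \|v\|_{0,\kappa}^2$, while $\hat{\delta}z^i$ is handled by combining Taylor expansion with the Lipschitz estimates on $\nabla G_i$ and $\nabla^{(2)} G_i$ from Assumption A2 together with $y,v\in\hat{\mathcal{C}}^{\kappa,\kappa}_1$, producing a bound of the form $\sup_i\|\hat{\delta}z^i\|_{\zeta,\kappa-\alpha}\le C\|v\|_{\hat{\mathcal{C}}^{\kappa,\kappa}_1}^2$. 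Lemma \ref{tecdeltaZ} then yields $\|\hat{\delta}\mathcal{J}_{\cdot 0}(\hat{d}x\,R^G)\|_{\tilde{\gamma},\kappa}\lesssim \|x\|_{\mathcal{W}^{\tilde{\gamma},\delta,\infty}_{\lambda,T}}\|v\|_{\hat{\mathcal{C}}^{\kappa,\kappa}_1}^2$, and since $\tilde{\gamma}>\kappa$ this embeds into the required $\hat{\mathcal{C}}^{\kappa,\kappa}_1$ control; the sup-norm part follows from $(\hat{\delta}Z)_{t0}=Z_t$. This gives (\ref{partial2}).

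Joint Fréchet differentiability then follows from continuity in $(x,y)$ of the two partial derivatives, established by another Taylor/perturbation argument combined with Lemmas \ref{consLemma} and \ref{tecdeltaZ}. For the homeomorphism claim, by (\ref{partial2}) the equation $\nabla_2 L(x,\Phi(x))(v)=\eta$ is
$$v_t = \eta_t + \int_0^t S(t-s)\nabla F(y_s)v_s\,ds + \mathcal{J}_{t0}(\hat{d}x\,\nabla G(y)v),\qquad y=\Phi(x),$$
which by Corollary \ref{linearequation1} admits a unique solution $v\in\hat{\mathcal{C}}^{\kappa,\kappa}_1$ for every $\eta\in\hat{\mathcal{C}}^{\kappa,\kappa}_1$. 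Hence $\nabla_2 L(x,\Phi(x))$ is a bounded linear bijection of the Banach space $\hat{\mathcal{C}}^{\kappa,\kappa}_1$, and the open mapping theorem furnishes continuity of its inverse. I expect the main obstacle to be the careful bookkeeping in the $\hat{\delta}R^{G,i}(y,v)$ estimate so that the hypotheses of Lemma \ref{tecdeltaZ} are verified with the correct exponents $\zeta,\eta,\alpha$; the remaining steps are conceptually routine once that estimate is in place.
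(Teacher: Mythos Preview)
Your proposal is correct and follows essentially the same route as the paper: Taylor expansion of $F$ and $G_i$, control of the stochastic remainder via Lemma \ref{tecdeltaZ} (the paper takes $\zeta=\alpha=\kappa$ so that $\eta-\alpha=0$ and the $\hat{\delta}$-increment is estimated in $E$ using the $E$-Lipschitz bound on $\nabla^{(2)}G_i$ from A2 together with the embedding $\hat{\mathcal{C}}^{\kappa,\kappa}_1\hookrightarrow\mathcal{C}^{\kappa,0}_1$), and the homeomorphism via Corollary \ref{linearequation1} plus the open mapping theorem. The only cosmetic difference is that the paper expands $L(x+h,y+v)-L(x,y)$ jointly and handles the cross-term $R_3$ explicitly, whereas you compute the partials separately and invoke continuity of partials to obtain joint differentiability; the estimates required for that continuity step are exactly those the paper carries out for $R_3$, so no work is saved or added.
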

\begin{proof}
In the sequel, $C$ is a constant which may differ form line to line. By the very definition,

$$L(x+h,y+v)_t - L(x,y)_t =  v_t - \int_0^t S(t-u) \big[ F(y_u + v_u) - F(y_u)\big]du$$
$$-\sum_{i\ge 1}\sqrt{\lambda_i}\int_0^t S(t-u)(G_i(y_u+ v_u))dh^i_u - \sum_{i\ge 1}\sqrt{\lambda_i}\int_0^t S(t-u)(G_i(y_u+ v_u) -G_i(y_u) )dx^i_u.$$
Let us write the increments in terms of the Taylor formula,

$$F(y_u +v_u) - F(y_u)  = \nabla F(y_u)v_u + z_u(y,v),\quad G_i(y_u +v_u) - G_i(y_u)  = \nabla G_i(y_u)v_u + c^i_u(y,v),$$
$$G_i(y_u +v_u) = G_i(y_u) + e^i_u(y,v),$$
where

$$z_u(y,v):=\Bigg(\int_0^1 (1-r)\nabla^{(2)}F(y_u+rv_u)dr\Bigg)(v_u,v_u),\quad c^i_u(y,v):=\Bigg(\int_0^1 (1-r)\nabla^{(2)}G_i(y_u+rv_u)dr\Bigg)(v_u,v_u),$$
$$e^i_u(y,v):= \bigg(\int_0^1 \nabla G_i (y_u + r v_u)dr\bigg)v_u,$$
for $i\ge 1$ and $0\le u\le t$. Therefore,

$$L(x+h,y+v)_t - L(x,y)_t - \nabla_1 L(x,y)(h)_t - \nabla_2 L(x,y)(v)_t = R_1(y,v)_t+R_2(y,v)_t + R_3(y,v)_t, $$
where

$$R_1(y,v)_t:=-\int_0^t S(t-u)z_u(y,v)du,\quad R_2(y,v)_t:=-\sum_{i\ge 1}\sqrt{\lambda_i}\int_0^t S(t-u)c^i_u(y,v)dx^i_u,$$
$$R_3(y,v)_t:=-\sum_{i\ge 1}\sqrt{\lambda_i}\int_0^t S(t-u)e^i_u(y,v)dh^i_u. $$

We need to check

\begin{equation}
\| R_1(y,v) + R_2(y,v) + R_3(y,v)\|_{\hat{\mathcal{C}}^{\kappa,\kappa}_1} = o \big( \| h\|^2_{\mathcal{W}^{\tilde{\gamma},\delta,\infty}_{\lambda,T}} + \|v\|^2_{\hat{\mathcal{C}}^{\kappa,\kappa}_1}\big)^{\frac{1}{2}}.
\end{equation}
The first term is easy. Indeed, if the second order derivative of $F$ is bounded, then the norm of the bilinear form $z_u(y,v)$ can be estimated as follows $\|z_u(y,v)\|_{(2),\kappa\rightarrow \kappa}\le C |v_u|^2_{\kappa}\le C \|v\|^2_{\hat{\mathcal{C}}^{\kappa,\kappa}_1}$. Therefore,

$$
\|R_1(u,v)\|_{\hat{\mathcal{C}}^{\kappa,\kappa}_1}\le C\|v\|^2_{\hat{\mathcal{C}}^{\kappa,\kappa}_1}.
$$
Then,

\begin{equation}
\frac{\|R_1(u,v)\|_{\hat{\mathcal{C}}^{\kappa,\kappa}_1}}{\big( \| h\|^2_{\mathcal{W}^{\tilde{\gamma},\delta,\infty}_{\lambda,T}} + \|v\|^2_{\hat{\mathcal{C}}^{\kappa,\kappa}_1}\big)^{\frac{1}{2}}}\le \frac{\|R_1(u,v)\|_{\hat{\mathcal{C}}^{\kappa,\kappa}_1}}{\big(\|v\|^2_{\hat{\mathcal{C}}^{\kappa,\kappa}_1}\big)^{\frac{1}{2}}} \le  C\|v\|_{\hat{\mathcal{C}}^{\kappa,\kappa}_1}.
\end{equation}
Let us now estimate $R_2(y,v)$. At first, since $R_2(y,v)_0=0$, then

\begin{equation}\label{diffL}
\|R_2(y,v)\|_{\hat{\mathcal{C}}^{\kappa,\kappa}_1}\le (2+T^\kappa)\|\hat{\delta} R_2(y,v)\|_{\kappa,\kappa},
\end{equation}
where $-(\hat{\delta} R_2(y,v))_{ts} = \sum_{i\ge 1}\sqrt{\lambda_i}\int_s^t S(t-u)c_u^i(y,v)dx^i_u = \mathcal{J}_{ts}\big(\hat{d}x c(y,v)\big)$ so that $\|\hat{\delta} R_2(y,v)\|_{\kappa,\kappa} = \|\mathcal{J}\big(\hat{d}x c(y,v)\big)\|_{\kappa,\kappa}$. By Lemma \ref{tecdeltaZ}, there exists a constant $C$ such that

\begin{equation}\label{diffL1}
\|\mathcal{J}\big(\hat{d}x c(y,v)\big)\|_{\kappa,\kappa} \le C \|x\|_{\mathcal{W}^{\gamma,\delta,\infty}_{\lambda,T}}\Big\{\sup_{i\ge1}\|c^i(y,v)\|_{0,\kappa} +  \sup_{i\ge 1}\| \hat{\delta} c^i(y,v)\|_{\kappa,0}\Big\}.
\end{equation}
By definition,

$$(\hat{\delta}c^i(y,v))_{ts} =c^i_t(y,v) - c^i_s(y,v) + c^i_s(y,v) - S(t-s)c^i_s(y,v); (t,s)\in \mathcal{S}_2.$$
By viewing $\nabla^{(2)}G_i:E_\kappa\times E_\kappa \rightarrow E_\kappa$ as a bounded bilinear form where $\kappa >0$, we observe $c^i_s(y,v)\in E_\kappa$ and this little gain of spatial regularity allows us to estimate

\begin{equation}\label{depo1}
\|(\hat{\delta}c^i(y,v))_{ts}\|_{E}\le \| (\delta c^i(y,v))_{ts}\|_{E} + \| \big(S(t-s) - \text{Id}\big)c^i_s(y,v)\|_{E},
\end{equation}
where (see e.g Th 6.13 in \cite{pazy})

\begin{eqnarray}
\nonumber\| \big(S(t-s) - \text{Id}\big)c^i_s(y,v)\|_{E}&\le& C |t-s|^\kappa |c^i_s(y,v)|_{\kappa}\\
\nonumber& &\\
\label{depo2}&\le& C |t-s|^\kappa |v_s|^2_{\kappa}\le C|t-s|^{\kappa}\|v\|^2_{0,\kappa},
\end{eqnarray}
and the estimate (\ref{depo2}) is due to the boundedness $\sup_{i\ge 1}\sup_{a\in E_\kappa}\|\nabla^{(2)}G_i(a)\|_{(2),\kappa\rightarrow\kappa} < \infty$.

For each $i\ge 1$ and $u\in [0,t]$, we observe $\int_0^1 (1-r)\nabla^{(2)}G_i(y_u+rv_u)dr:E \times E \rightarrow E$ is a bounded bilinear form, so that we shall estimate

$$\| c^i_u(y,v) - c^i_{u'}(y,v)\|_{E}\le C\|v_{u'} - v_u\|_{E} \|v_{u'}\|_{E} + C \|v_{u'} - v_u\|_{E} \|v_{u}\|_{E} $$
$$+ \int_0^1 (1-r)\big\|\nabla^{(2)}G_i(y_u+rv_u) -\nabla^{(2)}G_i(y_{u'}+rv_{u'})\big\|_{(2),0\rightarrow 0} dr \|v_{u'}\|^2_{E}.$$
By using the Lipschitz property on the bilinear form $\nabla^{(2)} G_i$, we have

$$\int_0^1 (1-r)\big\|\nabla^{(2)}G_i(y_u+rv_u) -\nabla^{(2)}G_i(y_{u'}+rv_{u'})\big\|_{(2),0\rightarrow 0} dr\le C\int_0^1 (1-r)\| y_u-y_{u'}\|_{E} dr$$
$$+ \int_0^1 (1-r)r\| v_u-v_{u'}\|_{E} dr \le C \| y_u-y_{u'}\|_{E} + C\|v_u-v_{u'}\|_{E}.$$
Now, we observe $\hat{\mathcal{C}}^{\kappa,\kappa}_1 \hookrightarrow \mathcal{C}^{\kappa,0}_1$ (see (\ref{interpolationDELTA})) and $E_\kappa \hookrightarrow E$. Therefore,

\begin{equation}\label{diffL2}
\frac{\| c^i_u(y,v) - c^i_{u'}(y,v)\|_{E}}{|u-u'|^\kappa}\le C2\| v\|^2_{\hat{\mathcal{C}}^{\kappa,\kappa}_1} + C\| v\|^3_{\hat{\mathcal{C}}^{\kappa,\kappa}_1}.
\end{equation}
By assumption, $\sup_{i\ge 1}\sup_{p\in E_\kappa}\|\nabla^2G_i(p)\|_{(2),\kappa\rightarrow\kappa} < \infty$ and hence

\begin{equation}\label{diffL3}
\sup_{i\ge 1}\|c^i(y,v)\|_{0,\kappa}\le C \|v\|^2_{\hat{\mathcal{C}}^{\kappa,\kappa}_1}.
\end{equation}
Plugging (\ref{diffL3}), (\ref{diffL2}), (\ref{depo2}) and (\ref{depo1}) into (\ref{diffL1}), we conclude from (\ref{diffL}) that $\|R_2(y,v)\|_{\hat{\mathcal{C}}^{\kappa,\kappa}_1}\le C \|v\|^2_{\hat{\mathcal{C}}^{\kappa,\kappa}_1}$.

Let us now estimate $R_3(y,v)$. Similar to (\ref{diffL}), from Lemma \ref{tecdeltaZ}, we know there exists a constant $C$ such that

\begin{equation}\label{diffL4}
\|\mathcal{J}\big(\hat{d}x e(y,v)\big)\|_{\kappa,\kappa} \le C \|h\|_{\mathcal{W}^{\gamma,\delta,\infty}_{\lambda,T}}\Big\{\sup_{i\ge1}\|e^i(y,v)\|_{0,\kappa} +  \sup_{i\ge 1}\| \hat{\delta} e^i(y,v)\|_{\kappa,0}\Big\}.
\end{equation}
Clearly, Assumption A1 yields
\begin{equation}\label{diffL5}
\sup_{i\ge 1}\|e^i(y,v)\|_{0,\kappa}\le C \|v\|_{0,\kappa}\le C \|v\|_{\hat{\mathcal{C}}^{\kappa,\kappa}_1}.
\end{equation}
Similar to (\ref{depo1}) and (\ref{depo2}), we observe

\begin{equation}\label{depo3}
\|(\hat{\delta}e^i(y,v))_{ts}\|_{E}\le \| (\delta e^i(y,v))_{ts}\|_{E} + \| \big(S(t-s) - \text{Id}\big)e^i_s(y,v)\|_{E},
\end{equation}
where

\begin{equation}\label{depo4}
\| \big(S(t-s) - \text{Id}\big)e^i_s(y,v)\|_{E}\le C|t-s|^{\kappa}\|v\|_{0,\kappa}; (t,s)\in \mathcal{S}_2.
\end{equation}

The boundedness and the Lipschitz property on $\nabla G_i$ (Assumption A2) allow us to estimate

$$\| e^i_u(y,v) - e^i_{u'}(y,v)\|_{E}\le C \|v_u - v_{u'}\|_{E} + C\|v_{u'}\|_{E}\Big\{ \|y_u - y_{u'}\|_{E} + \|v_u - v_{u'}\|_{E}\Big\}.$$
Then, (\ref{interpolationDELTA}) yields

\begin{equation}\label{diffL6}
\frac{\| e^i_u(y,v) - e^i_{u'}(y,v)\|_{E}}{|u-u'|^\kappa} \le C\|v\|_{\hat{\mathcal{C}}^{\kappa,\kappa}_1} + \|v\|_{\hat{\mathcal{C}}^{\kappa,\kappa}_1}\{\|y\|_{\hat{\mathcal{C}}^{\kappa,\kappa}_1} + \|v\|_{\hat{\mathcal{C}}^{\kappa,\kappa}_1}\}.
\end{equation}
By using (\ref{diffL4}), (\ref{diffL5}), (\ref{depo3}), (\ref{depo4}) and (\ref{diffL6}), we infer

$$\|\mathcal{J}\big(\hat{d}x e(y,v)\big)\|_{\kappa,\kappa} = O\big( \|h\|_{\mathcal{W}^{\tilde{\gamma},\delta,\infty}_{\lambda,T}}\times \|v\|_{\hat{\mathcal{C}}^{\kappa,\kappa}_1} \big).$$

One can easily check $(x,y)\mapsto \nabla_1 L(x,y)\in \mathcal{L}(\mathcal{W}^{\tilde{\gamma},\delta,\infty}_{\lambda,T}; \hat{\mathcal{C}}^{\kappa,\kappa}_1)$ and
$(x,y)\mapsto \nabla_2 L(x,y)\in \mathcal{L}(\hat{\mathcal{C}}^{\kappa,\kappa}_1; \hat{\mathcal{C}}^{\kappa,\kappa}_1)$ are both continuous. Summing up all the above steps, we conclude $L$ is Fr\'echet differentiable and formulas (\ref{partial1}) and (\ref{partial2}) hold true. It remains to check $\nabla_2 L(x,\Phi(x))$ is a $\hat{\mathcal{C}}^{\kappa,\kappa}_1$-homeomorphism. By open mapping theorem, this is an immediate consequence of Corollary \ref{linearequation1} (which proves it is an isomorphism). The continuity can be easily checked so we left the details of this point to the reader.
\end{proof}

By applying implicit function theorem, $x\mapsto \Phi(x)$ is continuously Fr\'echet differentiable and the following formula holds true

\begin{equation}\label{abstractformulaDER}
\nabla \Phi(x) = -\nabla_2 L(x,\Phi(x))^{-1}\circ \nabla_1 L(x,\Phi(x)); x\in \mathcal{W}^{\tilde{\gamma},\delta,\infty}_{\lambda,T}.
 \end{equation}
The inverse operator yields $\nabla_2 L(x,\Phi(x)) \big( \nabla_2 L(x,\Phi(x))^{-1}(v)\big) = v$ so that

$$\nabla_2 L(x,\Phi(x))^{-1}(v)_t = v_t+ \int_0^t S(t-u) \nabla F (\Phi(x)_u)\nabla_2 L(x,\Phi(x))^{-1}(v)_udu$$
$$+\sum_{i\ge 1}\sqrt{\lambda_i}\int_0^t S(t-u) \nabla G_i \big(\Phi(x)_u\big)\nabla_2 L(x,\Phi(x))^{-1}(v)_udx^i_u; 0\le t\le T,$$
for each $v\in\hat{\mathcal{C}}^{\kappa,\kappa}_1$. Therefore, for each $x,h\in \mathcal{W}^{\tilde{\gamma},\delta,\infty}_{\lambda,T}$, $\nabla \Phi (x)(h)$ is the unique solution of

$$\nabla \Phi (x)(h)_t =\sum_{i\ge 1}\sqrt{\lambda_i}\int_0^t S(t-u) G_i \big(\Phi(x)_u\big)dh^i_u + \sum_{i\ge 1}\sqrt{\lambda_i}\int_0^t S(t-u) \nabla G_i \big(\Phi(x)_u\big) \nabla \Phi(x)(h)_u dx^i_u$$
\begin{equation}\label{frecheteq}
+ \int_0^t S(t-u) \nabla F\big(\Phi(x)_u\big) \nabla \Phi(x)(h)_u du; 0\le t\le T.
\end{equation}
Now, by Corollary \ref{linearequation1}, for each $u\in (0,T),x\in \mathcal{W}^{\tilde{\gamma},\delta,\infty}_{\lambda,T}$ and $i\ge 1$, the mapping $t\mapsto \Psi^i_{t,u}(x)$ given by

\begin{eqnarray}
\nonumber\Psi^i_{t,u}(x)&:=&S(t-u)G_i\big(\Phi(x)_u\big) + \sum_{j\ge1 }\sqrt{\lambda_j}\int_u^t S(t-\ell) \nabla G_j \big(\Phi(x)_\ell\big)\Psi^i_{\ell,u}(x)dx^j_\ell\\
\label{auxiliar}&+& \int_u^t S(t-\ell)\nabla F \big( \Phi(x)_\ell\big)\Psi^{i}_{\ell,u}(x)d\ell,
\end{eqnarray}
where $\Psi^i_{t,u}(x)=0$ for $u > t$, it is a well-defined element of $\hat{\mathcal{C}}^{\kappa,\kappa}_1$ over $[u,T]$. Let us denote $\Gamma^i_{x,u,u'}(t):= \Psi^i_{t,u}(x) - \Psi^i_{t,u'}(x)$ for $0\le u' \le u\le t\le T$. It is simple to check that

\begin{eqnarray}
\nonumber \Gamma^i_{x,u,u'}(t) &=& S(t-u)\Big[\Psi^i_{u,u}(x) - \Psi^i_{u,u'}(x)\Big] + \sum_{j\ge 1}\sqrt{\lambda_j}\int_u^t S(t-\ell)\nabla G_j(\Phi(x)_\ell)\Gamma^i_{x,u,u'}(\ell)dx^j_\ell\\
\nonumber & &\\
\nonumber &+&\int_{u}^t S(t-\ell) \nabla F(\Phi(x)_\ell)\Gamma^i_{x,u,u'}(\ell)d\ell.
\end{eqnarray}
The following technical lemma is important to derive an alternative representation for $\nabla \Phi(x)(h)$.

\begin{lemma}\label{menosK}
If Assumptions H1-A1-A2 hold true, then for each $x\in \mathcal{W}^{\tilde{\gamma},\delta,\infty}_{\lambda,T}$, there exists a positive constant $C$ which only depends on $\|x\|_{\mathcal{W}^{\tilde{\gamma},\delta,\infty}_{\lambda,T}}$ and $\|\delta \Phi(x)\|_{\kappa,\kappa}$ such that
$$|\Gamma^i_{x,u,u'}(t)|_\kappa \le C|\Psi^i_{u,u}(x) - \Psi^i_{u,u'}(x)|_\kappa,$$
for every $0 \le u' < u\le t \le T$ and $i\ge 1$.
\end{lemma}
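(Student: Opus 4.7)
The plan is to treat the linear integral identity for $v := \Gamma^i_{x,u,u'}$ as a fixed-point equation in $\hat{\mathcal{C}}^{\kappa,\kappa}_1$ and to bootstrap a short-time contraction estimate into a global bound, with all constants uniform in $i$ thanks to Assumptions A1 and A2.

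On a generic subinterval $[u_0,u_1]\subset[u,T]$ I would rewrite the identity as
$$v_t = S(t-u_0)v_{u_0} + \int_{u_0}^t S(t-\ell)\nabla F(\Phi(x)_\ell)v_\ell\,d\ell + \mathcal{J}_{tu_0}\bigl(\hat{d}x\,\nabla G(\Phi(x))v\bigr),$$
and estimate each term in the norm of $\hat{\mathcal{C}}^{\kappa,\kappa}_1([u_0,u_1])$ separately. The forcing $S(\cdot-u_0)v_{u_0}$ has $\hat\delta\equiv 0$ by the semigroup identity and $\|S(\cdot-u_0)v_{u_0}\|_{0,\kappa}\le C|v_{u_0}|_\kappa$, so it contributes at most $C|v_{u_0}|_\kappa$. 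The drift is bounded by $C(u_1-u_0)^{1-\kappa}\|v\|_{\hat{\mathcal{C}}^{\kappa,\kappa}_1}$ via standard Hölder semigroup bounds together with Assumption A1. For the rough-integral term I would invoke Lemma \ref{tecdeltaZ} with $\zeta=\eta=\alpha=\kappa$ applied to $z^j:=\nabla G_j(\Phi(x))v$, which by (\ref{withzeta}) produces the common gain factor $(u_1-u_0)^{\tilde\gamma-\kappa}>0$ thanks to the choice of indices (\ref{Aindices}). The two quantities $\sup_j\|z^j\|_{0,\kappa}$ and $\sup_j\|\hat\delta z^j\|_{\kappa,0}$ on the right-hand side of (\ref{withzeta}) are then bounded uniformly in $j$ by, respectively, $C\|v\|_{0,\kappa}$ (immediate from A1) and $C(1+\|\Phi(x)\|_{\hat{\mathcal{C}}^{\kappa,\kappa}_1})\|v\|_{\hat{\mathcal{C}}^{\kappa,\kappa}_1}$, the latter by decomposing $\hat\delta z^j=\delta z^j-a\cdot z^j$, using the Pazy-type estimate $\|a_{ts}w\|_E\le C|t-s|^\kappa|w|_\kappa$ for $w\in E_\kappa$, the uniform-in-$j$ Lipschitz/boundedness of $\nabla G_j:E\to E$ from A2, and the embedding (\ref{embed}). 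Assembling the three contributions yields
$$\|v\|_{\hat{\mathcal{C}}^{\kappa,\kappa}_1([u_0,u_1])}\le C_1|v_{u_0}|_\kappa + C_2\,\phi(u_1-u_0)\,\|v\|_{\hat{\mathcal{C}}^{\kappa,\kappa}_1([u_0,u_1])},$$
where $\phi(\tau)=\tau^{\tilde\gamma-\kappa}+\tau^{1-\kappa}\to 0$ as $\tau\to 0^+$, and $C_1,C_2$ depend only on $\|x\|_{\mathcal{W}^{\tilde\gamma,\delta,\infty}_{\lambda,T}}$, $\|\delta\Phi(x)\|_{\kappa,\kappa}$ and the bounds in H1-A1-A2.

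Finally, I would pick $\tau>0$ with $C_2\phi(\tau)\le 1/2$ (a choice depending only on $\|x\|_{\mathcal{W}^{\tilde\gamma,\delta,\infty}_{\lambda,T}}$ and $\|\delta\Phi(x)\|_{\kappa,\kappa}$), subdivide $[u,T]$ into at most $N=\lceil T/\tau\rceil$ pieces of length $\tau$, and iterate: on each subinterval $[u_k,u_{k+1}]$ the short-time estimate gives $\|v\|_{\hat{\mathcal{C}}^{\kappa,\kappa}_1([u_k,u_{k+1}])}\le 2C_1|v_{u_k}|_\kappa$, hence $|v_{u_{k+1}}|_\kappa\le 2C_1|v_{u_k}|_\kappa$, so $N$ iterations produce $\sup_{t\in[u,T]}|v_t|_\kappa\le(2C_1)^N|v_u|_\kappa$. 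Since $v_u=\Psi^i_{u,u}(x)-\Psi^i_{u,u'}(x)$, this is precisely the claim with $C=(2C_1)^N$. The main technical obstacle I anticipate is the uniform-in-$j$ bound on $\sup_j\|\hat\delta z^j\|_{\kappa,0}$: it crucially relies on the Lipschitz property of $\nabla G_j:E\to E$ being uniform in $j$ supplied by Assumption A2, without which the induced constant would blow up with $j$ and the summation in $\mathcal{J}$ would fail.
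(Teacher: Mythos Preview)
Your proposal is correct and follows essentially the same approach as the paper's proof: both split the $\hat{\mathcal{C}}^{\kappa,\kappa}_1$-estimate into the (vanishing) semigroup forcing, the drift, and the rough convolution term, apply Lemma~\ref{tecdeltaZ} via (\ref{withzeta}) with the parameter choice $\zeta=\eta=\alpha=\kappa$ to $z^j=\nabla G_j(\Phi(x))v$, control $\sup_j\|z^j\|_{0,\kappa}$ and $\sup_j\|\hat\delta z^j\|_{\kappa,0}$ through A1--A2 and the embedding (\ref{interpolationDELTA}), and finish by a small-interval contraction plus patching. The only cosmetic difference is that the paper writes the estimate first on $[u,T]$ (with $|T-u|^{\tilde\gamma-\kappa}$ as the small parameter) and then invokes the patching argument in one sentence, whereas you set things up on a generic subinterval from the outset and iterate explicitly.
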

\begin{proof}
Fix $0 \le u' < u \le T$, $i\ge 1$, $0\le \alpha \le \text{min}\{\kappa,\eta\}$ for $\eta\ge 0$. Let us denote $\varphi^i_{x,u,u'} = \big[\Psi^i_{u,u}(x) - \Psi^i_{u,u'}(x)\big]$. In the sequel, $C$ is a constant which may differ from line to line. Of course,

$$\|\hat{\delta}\Gamma^i_{x,u,u'}\|_{\kappa,\eta} \le \|\hat{\delta}S(\cdot-u)\varphi^i_{x,u,u'}\|_{\kappa,\eta} + \sum_{j\ge 1}\sqrt{\lambda_j}\Bigg\|\hat{\delta}\int_u^\cdot S(\cdot-\ell)\nabla G_j(\Phi(x)_\ell)\Gamma^i_{x,u,u'}(\ell)dx^j_\ell \Bigg\|_{\kappa,\eta}$$
$$+\Bigg\|\hat{\delta}\int_{u}^\cdot S(\cdot-\ell) \nabla F(\Phi(x)_\ell)\Gamma^i_{x,u,u'}(\ell)d\ell\Bigg\|_{\kappa,\eta} =: I_1+I_2+I_3.$$
At first, we observe $S(t-u)\varphi^i_{x,u,u'} - S(t-s)S(s-u)\varphi^i_{x,u,u'} = 0$ so that $I_1=0$.

By Lemma \ref{tecdeltaZ} (see (\ref{withzeta})), we observe there exists a constant $C$ such that

$$I_2\le  C \|x\|_{\mathcal{W}^{\tilde{\gamma},\delta,\infty}_{\lambda,T}}\Big\{ \sup_{j\ge 1}\| z^{ij}_{x,u,u'}\|_{0,\eta}|T-u|^{\tilde{\gamma}-\kappa} + |T-u|^{\tilde{\gamma}-\alpha}\sup_{j\ge 1}\|\hat{\delta} z^{ij}_{x,u,u'}\|_{\kappa,\eta-\alpha}\Big\},
$$
where $z^{ij}_{x,u,u'}(\ell) = \nabla G_j\big(\Phi(x)_\ell\big)\Gamma^i_{x,u,u'}(\ell)$. Let us take $\eta=\kappa=\alpha$. We observe

$$|z^{ij}_{x,u,u'}(\ell)|_\kappa \le \|\nabla G_j\big(\Phi(x)_\ell\big)\|_{\kappa\rightarrow \kappa}|\Gamma^i_{x,u,u'}(\ell)|_\kappa,$$
so that the boundedness assumption on the gradient $\nabla G_j$ yields
\begin{equation}\label{menosK2}
\|z^{ij}_{x,u,u'}\|_{0,\kappa}\le C\|\Gamma^i_{x,u,u'}\|_{0,\kappa}\le C \|\Gamma^i_{x,u,u'}\|_{\hat{\mathcal{C}}^{\kappa,\kappa}_1}.
\end{equation}
Triangle inequality yields
\begin{eqnarray}
\nonumber \big\|(\hat{\delta}z^{ij}_{x,u,u'})_{ts}\big\|_E&\le& \big\| [\nabla G_j\big(\Phi(x)_t\big) -\nabla G_j\big(\Phi(x)_s\big)]\big\|_{0\rightarrow 0} \big\|\Gamma^i_{x,u,u'}(t)\big\|_E\\
\nonumber & &\\
\nonumber &+& \|\nabla G_j\big(\Phi(x)_s\big)\|_{0\rightarrow 0}\|(\delta\Gamma^i_{x,u,u'})_{ts} \|_E\\
\nonumber & &\\
\nonumber &+& \big\|[\text{Id} - S(t-s)]\nabla G_j\big(\Phi(x)_s\big)\Gamma^i_{x,u,u'}(s) \big\|_E=:I_4+I_5+I_6,
\end{eqnarray}
where $\nabla G_j\big(\Phi(x)_s\big)\Gamma^i_{x,u,u'}(s)\in E_\kappa$. We observe

\begin{equation}\label{menosK3}
I_6\le C|t-s|^\kappa\|\Gamma^i_{x,u,u'}\|_{0,\kappa}.
\end{equation}
The imbedding (\ref{interpolationDELTA}) yields
\begin{equation}\label{menosK4}
I_5\le C|t-s|^\kappa \{\| \hat{\delta }\Gamma^i_{x,u,u'}\|_{\kappa,\kappa} + \|\Gamma^i_{x,u,u'}\|_{0,\kappa}\} = C|t-s|^\kappa \|\Gamma^i_{x,u,u'}\|_{\hat{\mathcal{C}}^{\kappa,\kappa}_1}.
\end{equation}
We observe

\begin{equation}\label{menosK5}
I_4\le C\|\delta \Phi(x)\|_{\kappa,\kappa}|t-s|^\kappa\|\Gamma^i_{x,u,u'}\|_{0,\kappa}.
\end{equation}
Summing up (\ref{menosK5}), (\ref{menosK4}) and (\ref{menosK3}), we have

\begin{equation} \label{menosK6}
\|\hat{\delta}z^{ij}_{x,u,u'}\|_{\kappa,0}\le C \Big(1+  \|\delta \Phi(x)\|_{\kappa,\kappa}\Big)    \|\Gamma^i_{x,u,u'}\|_{\hat{\mathcal{C}}^{\kappa,\kappa}_1}.
\end{equation}
This shows that

$$
I_2\le C\|x\|_{\mathcal{W}^{\tilde{\gamma},\delta,\infty}_{\lambda,T}}\Big\{\|\Gamma^i_{x,u,u'}\|_{\hat{\mathcal{C}}^{\kappa,\kappa}_1}|T-u|^{\tilde{\gamma}-\kappa} + |T-u|^{\tilde{\gamma}-\alpha}\Big(1+  \|\delta \Phi(x)\|_{\kappa,\kappa}\Big)    \|\Gamma^i_{x,u,u'}\|_{\hat{\mathcal{C}}^{\kappa,\kappa}_1} \Big\}.
$$
We notice that

\begin{eqnarray}
\nonumber I_3 &\le& C\sup_{u\le s < t\le T}\frac{|\int_{s}^t S(t-\ell) \nabla F(\Phi(x)_\ell)\Gamma^i_{x,u,u'}(\ell)d\ell|_\kappa}{|t-s|^\kappa}\\
\nonumber & &\\
\nonumber &=&C\|\Gamma^i_{x,u,u'}\|_{0,\kappa}|T-u|^{1-\kappa}.
\end{eqnarray}
Summing up the above inequalities, we have

\begin{eqnarray}
\nonumber\|\hat{\delta}\Gamma^i_{x,u,u'}\|_{\kappa,\kappa}&\le&   C\|x\|_{\mathcal{W}^{\tilde{\gamma},\delta,\infty}_{\lambda,T}}\Big\{ C|T-u|^{\tilde{\gamma}-\kappa}\Big(1+  \|\delta \Phi(x)\|_{\kappa,\kappa}\Big)    \|\Gamma^i_{x,u,u'}\|_{\hat{\mathcal{C}}^{\kappa,\kappa}_1} \nonumber\Big\}\\
\label{menosK7}& &\\
\nonumber&+& C\|\Gamma^i_{x,u,u'}\|_{0,\kappa}|T-u|^{1-\kappa}.
\end{eqnarray}
Therefore,

\begin{eqnarray}
\nonumber\|\Gamma^i_{x,u,u'}\|_{\hat{\mathcal{C}}^{\kappa,\kappa}_1}&\le& \|S(\cdot-u)\varphi^i_{x,u,u'}\|_{0,\kappa} +   C\|x\|_{\mathcal{W}^{\tilde{\gamma},\delta,\infty}_{\lambda,T}}\Big\{ C|T-u|^{\tilde{\gamma}-\kappa}\Big(1+  \|\delta \Phi(x)\|_{\kappa,\kappa}\Big)    \|\Gamma^i_{x,u,u'}\|_{\hat{\mathcal{C}}^{\kappa,\kappa}_1} \nonumber\Big\}\\
\label{menosK8}& &\\
\nonumber&+& C\|\Gamma^i_{x,u,u'}\|_{0,\kappa}|T-u|^{1-\kappa},
\end{eqnarray}
where $\|S(\cdot-u)\varphi^i_{x,u,u'}\|_{0,\kappa}\le C |\varphi^i_{x,u,u'}|_\kappa$. Finally, by working on a small interval and performing a standard patching argument, the estimate (\ref{menosK8}) allows us to conclude

$$
\|\Gamma^i_{x,u,u'}\|_{\hat{\mathcal{C}}^{\kappa,\kappa}_1}\le C_{x,y,T} |\varphi^i_{x,u,u'}|_\kappa,
$$
where $C_{x,y,T}= g\big(\|x\|_{\mathcal{W}^{\tilde{\gamma},\delta,\infty}_{\lambda,T}},\|\delta \Phi(x)\|_{\kappa,\kappa},T\big)$ for a function $g:\mathbb{R}^3_+\rightarrow \mathbb{R}_+$ growing with its arguments. This implies

$$|\Gamma^i_{x,u,u'}(t)|_\kappa = |\Psi^i_{t,u}(x) - \Psi^i_{t,u'}(x)|_\kappa\le C_{x,y,T}\big|\Psi^i_{u,u}(x) - \Psi^i_{u,u'}(x)\big|_\kappa.$$



\end{proof}

We are now in position to state the main result of this section. Let $\mathcal{C}^{\infty}_{0,\lambda}$ be the subset of $\mathcal{W}^{\tilde{\gamma},\delta,\infty}_{\lambda,T}$ composed by functions $g:\mathbb{N}\rightarrow \mathcal{C}^\infty_0$.

\begin{theorem}
Under Assumptions (H1-A1-A2), the It\^o map $x\mapsto \Phi(x)$ is continuously Fr\'echet differentiable and for each $x,h\in \mathcal{W}^{\tilde{\gamma},\delta,\infty}_{\lambda,T}$, $\nabla \Phi(x)(h)$ is the unique solution of the equation (\ref{frecheteq}). In addition, the following representation formula holds true
\begin{equation}\label{representationFRECHET}
\nabla \Phi(x)(h)_t = \sum_{i\ge 1}\sqrt{\lambda_i}\int_0^t \Psi^{i}_{t,u}(x)dh^i_u\in E_\kappa; 0\le t\le T,
\end{equation}
for each $(x,h)\in \mathcal{W}^{\tilde{\gamma},\delta,\infty}_{\lambda,T}\times \mathcal{C}^\infty_{0,\lambda} $.
\end{theorem}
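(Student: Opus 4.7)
The first two claims---that $\Phi$ is continuously Fr\'echet differentiable and that $\nabla\Phi(x)(h)$ solves (\ref{frecheteq})---follow already from the preceding development: the implicit function theorem applied to the mapping $L(x,y)$ (using that $\nabla_2 L(x,\Phi(x))$ is a homeomorphism of $\hat{\mathcal{C}}^{\kappa,\kappa}_1$ by Corollary \ref{linearequation1}) gives formula (\ref{abstractformulaDER}), and then inverting $\nabla_2 L(x,\Phi(x))$ on the image of $\nabla_1 L(x,\Phi(x))(h)$ produces exactly (\ref{frecheteq}). So the substantive task is to establish the representation (\ref{representationFRECHET}).

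My plan is to define, for $(x,h)\in \mathcal{W}^{\tilde{\gamma},\delta,\infty}_{\lambda,T}\times \mathcal{C}^\infty_{0,\lambda}$,
$$V^{h}_t := \sum_{i\ge 1}\sqrt{\lambda_i}\int_0^t \Psi^{i}_{t,u}(x)\,dh^i_u,\quad 0\le t\le T,$$
and to show that $V^h$ itself satisfies the linear equation (\ref{frecheteq}); uniqueness in $\hat{\mathcal{C}}^{\kappa,\kappa}_1$ from Corollary \ref{linearequation1} then forces $V^h = \nabla \Phi(x)(h)$. The key simplification is that, because $h^i\in \mathcal{C}^\infty_0$, each integral $\int_0^t \Psi^i_{t,u}(x)\,dh^i_u$ reduces to the ordinary Bochner integral $\int_0^t \Psi^i_{t,u}(x)\dot h^i_u\,du$, so no rough integration is required in the $u$-variable. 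Convergence of the series in $E_\kappa$ and membership of $V^h$ in $\hat{\mathcal{C}}^{\kappa,\kappa}_1$ are controlled by $\sum_i \sqrt{\lambda_i} < \infty$, the uniform bound $\sup_i \|\Psi^i_{\cdot,u}(x)\|_{\hat{\mathcal{C}}^{\kappa,\kappa}_1}$ coming from the Lipschitz-type estimates analogous to those proving Proposition \ref{exisuniq}, and the regularity of $u\mapsto \Psi^i_{\cdot,u}(x)$ provided by Lemma \ref{menosK}.

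The core computation is then the substitution of the defining identity (\ref{auxiliar}) into $V^h_t$ followed by an interchange of the order of integration. Splitting $\Psi^i_{t,u}(x)$ into its three constituent pieces gives
\begin{eqnarray*}
V^h_t &=& \sum_{i\ge 1}\sqrt{\lambda_i}\int_0^t S(t-u)G_i(\Phi(x)_u)\,dh^i_u\\
&&+\, \sum_{i\ge 1}\sqrt{\lambda_i}\int_0^t\!\! \sum_{j\ge 1}\sqrt{\lambda_j}\!\int_u^t S(t-\ell)\nabla G_j(\Phi(x)_\ell)\Psi^i_{\ell,u}(x)\,dx^j_\ell\,dh^i_u\\
&&+\, \sum_{i\ge 1}\sqrt{\lambda_i}\int_0^t\!\! \int_u^t S(t-\ell)\nabla F(\Phi(x)_\ell)\Psi^i_{\ell,u}(x)\,d\ell\,dh^i_u.
\end{eqnarray*}
For the third term the exchange is classical Fubini since both differentials are Lebesgue; for the second term, because $dh^i_u = \dot h^i_u\,du$ is a smooth Lebesgue differential while the inner $d x^j_\ell$ is a Young/convolutional integral, one may swap by approximating the Young integral by Riemann sums (each sum is a finite linear combination in $\ell$, hence Fubini is trivial) and passing to the limit using the uniform Young estimates of Lemma \ref{consLemma}. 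After the exchange, the inner sums reassemble into $V^h_\ell$, giving exactly equation (\ref{frecheteq}).

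The main obstacle is the rigorous justification of the Fubini swap in the middle term, together with tracking the uniform-in-$i,j$ estimates needed for the absolute convergence of the double series. This requires combining the pointwise-in-$u$ bounds on $\Psi^i_{\cdot,u}(x)$ with the Lipschitz continuity $u\mapsto \Psi^i_{\cdot,u}(x)$ of Lemma \ref{menosK}, together with $\sum_i \sqrt{\lambda_i}<\infty$, to reduce to a finite-dimensional Fubini statement and pass to the limit in the truncated sums. Once this exchange is justified, uniqueness in $\hat{\mathcal{C}}^{\kappa,\kappa}_1$ (Corollary \ref{linearequation1}) closes the argument and yields (\ref{representationFRECHET}) for every smooth test direction $h\in \mathcal{C}^\infty_{0,\lambda}$.
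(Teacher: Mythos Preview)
Your proposal is correct and follows essentially the same route as the paper: both reduce the first two claims to the implicit function theorem output (\ref{abstractformulaDER}), then substitute the defining equation (\ref{auxiliar}) for $\Psi^i_{t,u}(x)$ into the candidate $V^h_t$, apply Fubini (justified via the continuity of $u\mapsto \Psi^i_{t,u}(x)$ from Lemma \ref{menosK} and (\ref{continuousPSI})) to swap the $dh^i_u$ and $dx^j_\ell$/$d\ell$ integrals, and finally invoke the uniform bound $\sup_{i\ge 1}\sup_{0\le t\le T}\|\Psi^i_{t,\cdot}\|_{0,\kappa}<\infty$ (your remark on the ``Lipschitz-type estimates analogous to those proving Proposition \ref{exisuniq}'' is exactly what the paper records as (\ref{supPsi})) to sum over $i$ and recognize (\ref{frecheteq}). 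The paper is terser about the Fubini step---it simply asserts it after noting continuity---while you spell out the Riemann-sum approximation for the Young side, but the argument is the same.
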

\begin{proof}
The fact that $x\mapsto \Phi(x)$ is continuously Fr\'echet differentiable and it satisfies (\ref{frecheteq}) are consequences of (\ref{abstractformulaDER}). Obviously,

$$\sum_{i\ge 1}\sqrt{\lambda_i}\int_0^t \Psi^i_{t,u}(x)dh^i_u = \sum_{i\ge 1}\sqrt{\lambda_i}\int_0^t S(t-u) G_i \big(\Phi(x)_u\big)dh^i_u$$
$$+\sum_{i\ge 1}\sqrt{\lambda_i}\int_0^t \int_u^t S(t-\ell)\nabla F\big(\Phi(x)_\ell\big)\Psi^i_{\ell,u}(x) d\ell d h^i_u$$
$$+\sum_{i\ge 1}\sqrt{\lambda_i}\sum_{j\ge 1}\sqrt{\lambda_j}\int_0^t\int_u^t S(t-\ell)\nabla G_j \big(\Phi(x)_\ell\big)\Psi^i_{\ell,u}(x)d x^j_\ell dh^i_u; 0\le t\le T.$$
Let us fix $i\ge 1$ and $x\in \mathcal{W}^{\tilde{\gamma},\delta,\infty}_{\lambda,T}$. By Lemma \ref{menosK} and noticing

\begin{eqnarray}
\nonumber\Psi^i_{u,u}(x)-\Psi^i_{u,u'}(x)&=&G_i\big(\Phi(x)_u\big) - S(u-u')G_i\big(\Phi(x)_{u'}\big) - \sum_{j\ge1 }\sqrt{\lambda_j}\int_{u'}^u S(u-\ell) \nabla G_j \big(\Phi(x)_\ell\big)\Psi^i_{\ell,u'}(x)dx^j_\ell\\
\label{continuousPSI}& &\\
\nonumber&-& \int_{u'}^u S(u-\ell)\nabla F \big( \Phi(x)_\ell\big)\Psi^{i}_{\ell,u'}(x)d\ell; 0\le u' < u \le T; i\ge 1,
\end{eqnarray}
we clearly have $u\mapsto \Psi^i_{t,u}(x)$ is continuous, so that we shall apply Fubini's theorem to get

$$
\int_0^t \int_u^t S(t-\ell)\nabla F\big(\Phi(x)_\ell\big)\Psi^i_{\ell,u}(x) d\ell d h^i_u = \int_0^t \int_0^\ell S(t-\ell)\nabla F\big(\Phi(x)_\ell\big)\Psi^i_{\ell,u}(x) d h^i_u d\ell,
$$

$$
\int_0^t\int_u^t S(t-\ell)\nabla G_j \big(\Phi(x)_\ell\big)\Psi^i_{\ell,u}(x)d x^j_\ell dh^i_u = \int_0^t\int_0^\ell S(t-\ell)\nabla G_j \big(\Phi(x)_\ell\big)\Psi^i_{\ell,u}(x) dh^i_u d x^j_\ell; 0\le t\le T, i\ge 1.
$$
Therefore,

$$\sqrt{\lambda_i}\int_0^t \Psi^i_{t,u}(x)dh^i_u = \sqrt{\lambda_i}\int_0^t S(t-u) G_i \big(\Phi(x)_u\big)dh^i_u$$
$$+\int_0^t S(t-\ell)\nabla F\big(\Phi(x)_\ell\big)\sqrt{\lambda_i} \int_0^\ell\Psi^i_{\ell,u}(x)dh^i_u d\ell$$
$$+\sum_{j\ge 1}\sqrt{\lambda_j}\int_0^t S(t-\ell)\nabla G_j \big(\Phi(x)_\ell\big)\sqrt{\lambda_i}\int_0^\ell\Psi^i_{\ell,u}(x)dh^i_u d x^j_\ell ; 0\le t\le T.$$

At this point, in order to complete the proof of representation (\ref{representationFRECHET}), we only need to check

\begin{equation}\label{supPsi}
\sup_{i\ge 1}\sup_{0\le t\le T}\|\Psi^i_{t,\cdot}\|_{0,\kappa} < \infty.
\end{equation}
Since $\Psi^i$ is the solution of the linear equation (\ref{auxiliar}), a completely similar argument as detailed in the proof of Lemma \ref{menosK} yields

$$|\Psi^i_{t,u}(x)|_\kappa\le C_{x,y,T} \sup_{0\le r\le T}|S(r-u)G_i(\Phi(x)_u)|_\kappa,$$
for each $0\le u\le t\le T$, where $C_{x,y,T}= g\big(\|x\|_{\mathcal{W}^{\tilde{\gamma},\delta,\infty}_{\lambda,T}},\|\delta \Phi(x)\|_{\kappa,\kappa},T\big)$ for a function $g:\mathbb{R}^3_+\rightarrow \mathbb{R}_+$ growing with its arguments. This completes the proof.

\end{proof}

Let us now check Malliavin differentiability. Let us fix $t\in [0,T]$, $g\in E$ and we now look the mapping $ \mathcal{W}^{\tilde{\gamma},\delta,\infty}_{\lambda,T}\ni x\mapsto \langle \Phi(x)_t,g \rangle_E \in \mathbb{R} $. We can represent $\Phi(x)_t = \tau_t (\Phi(x))$, where $\tau_t:\hat{\mathcal{C}}^{\kappa,\kappa}_1\rightarrow E$ is the evaluation map which is a bounded linear operator for every $t\in [0,T]$. Then, the Fr\'echet derivative of $x\mapsto \Phi(x)_t$ is equal to the linear operator

$$\mathcal{W}^{\tilde{\gamma},\delta,\infty}_{\lambda,T}\ni f\mapsto \nabla \Phi (x)(f)_t\in E_\kappa\subset E.$$
Similarly, the Fr\'echet derivative of $x\mapsto \langle \Phi(x)_t,g \rangle_E$ is equal to

$$f\mapsto \langle \nabla \Phi(x)(f)_t, g\rangle_E.$$
We must find an $\mathcal{L}_2(U_0,\mathcal{H})$-valued random element $\omega\mapsto a(\omega)$ such that

$$\langle \nabla \Phi(\cdot)(\mathcal{R}_Hh)_t, g\rangle_E = \langle a(\cdot), h \rangle_{\mathcal{L}_2(U_0,\mathcal{H})}~a.s,$$
for each $h\in \mathcal{L}_2(U_0;\mathcal{H})$. If this is the case, then $a = \mathbf{D}_\cdot \langle X_t, g\rangle_E$ a.s. The following result is a straightforward consequence of the definition of $\mathcal{R}_H$.

\begin{lemma}
If $h\in \mathcal{C}^\infty_0$ and $\varphi\in \mathcal{L}_2(U_0;\mathbb{R})$, then

$$\mathcal{R}_H(h\otimes \varphi)\in \mathcal{C}^{\infty}_{0,\lambda}.$$
\end{lemma}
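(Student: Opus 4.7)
\emph{Plan.} The starting observation is that $\mathcal{R}_H=\mathcal{K}_H\circ K^*_H$ is linear and separates on the tensor product, so that
\begin{equation*}
\bigl(\mathcal{R}_H(h\otimes\varphi)\bigr)^i(t)=\varphi(\sqrt{\lambda_i}e_i)\cdot \rho_H h(t),\qquad \rho_H h(t):=(\mathcal{K}_{H,1}K^*_H h)(t),
\end{equation*}
where on the right $K^*_H$ and $\mathcal{K}_{H,1}$ denote the scalar operators of Section~\ref{gaussianspace}. Thus the $i$-th component is a constant (depending on $\varphi$ and $\lambda_i$) times the scalar $\mathcal{R}_H$ applied to $h$ alone. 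The proof therefore splits into two pieces: (a) show that $\rho_H h\in\mathcal{C}^\infty_0$ for every $h\in\mathcal{C}^\infty_0$; (b) verify the $\lambda$-weighted summability in the norm of $\mathcal{W}^{\tilde{\gamma},\delta,\infty}_{\lambda,T}$.

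For (a) I would exploit the polarisation identity $\rho_H h(t)=\langle h,\mathds{1}_{[0,t]}\rangle_\mathcal{H}$, which follows from the elementary computation $(\mathcal{K}_{H,1} K^*_H \mathds{1}_{[0,t]})(r)=R_H(r,t)$ and linearity in $h$. Since $\mathcal{C}^\infty_0\subset L^{1/H}([0,T];\mathbb{R})$, the integral-kernel representation (\ref{niceinner}) yields
\begin{equation*}
\rho_H h(t)=\alpha_H\int_0^t\!\!\int_0^T h(u)\,|r-u|^{2H-2}\,du\,dr,
\end{equation*}
so that $\rho_H h(0)=0$. Differentiating once under the integral gives $(\rho_H h)'(t)=\alpha_H\int_0^T h(u)|t-u|^{2H-2}du$, continuous in $t$ because $|\cdot|^{2H-2}$ is locally integrable for $H>1/2$. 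Higher orders follow by the substitution $v=t-u$ and integration by parts in $v$: the boundary contributions at $v=0$ and $v=t-T$ vanish by the compact support of $h$, producing the recursion $(\rho_H h)^{(k+1)}(t)=\alpha_H\int_0^T h^{(k)}(u)|t-u|^{2H-2}du$ for every $k\ge 0$. Iterating gives $\rho_H h\in C^\infty$; the compact-support aspect of $\mathcal{C}^\infty_0$ is automatic on the bounded interval $[0,T]$ seen by $\mathcal{W}^{\tilde{\gamma},\delta}_T$.

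For (b), a one-line Cauchy--Schwartz bound using $\mathrm{trace}\,Q=\sum_i\lambda_i<\infty$ gives
\begin{equation*}
\|\mathcal{R}_H(h\otimes\varphi)\|_{\mathcal{W}^{\tilde{\gamma},\delta,\infty}_{\lambda,T}}=\|\rho_H h\|_{\mathcal{W}^{\tilde{\gamma},\delta}_T}\sum_{i\ge 1}\sqrt{\lambda_i}\,|\varphi(\sqrt{\lambda_i}e_i)|\le \|\rho_H h\|_{\mathcal{W}^{\tilde{\gamma},\delta}_T}(\mathrm{trace}\,Q)^{1/2}\|\varphi\|_{\mathcal{L}_2(U_0,\mathbb{R})},
\end{equation*}
which is finite by step (a).

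The main obstacle is the bootstrap from $C^1$ to $C^\infty$ regularity for $\rho_H h$. A naive second differentiation past the first derivative would leave the singular boundary factors $|t|^{2H-2}$ and $|t-T|^{2H-2}$ inside the integral, blocking any further iteration. Compact support of $h$ inside $(0,T)$—together with the vanishing of all derivatives of $h$ at the endpoints—is exactly what makes these boundary terms disappear, so that each $t$-derivative can be transferred onto $h$ via integration by parts. Without this property the iteration would halt after $\lfloor 2H\rfloor$ steps.
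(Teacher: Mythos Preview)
The paper itself gives no proof of this lemma beyond the remark that it is ``a straightforward consequence of the definition of $\mathcal{R}_H$''. Your argument supplies the details the paper omits, and the strategy is the natural one: factorise the $i$-th component as $\varphi(\sqrt{\lambda_i}e_i)\cdot\rho_H h$, reduce the problem to the scalar map $\rho_H=\mathcal{K}_{H,1}K^*_H$, and then handle the $\lambda$-summability by Cauchy--Schwarz. The identification $\rho_H h(t)=\langle h,\mathds{1}_{[0,t]}\rangle_{\mathcal{H}}$ and the resulting double-integral formula are correct, as is the first differentiation.

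One point deserves care. In your Leibniz step the boundary contributions arise at $u=0$ and $u=T$ (equivalently $v=t$ and $v=t-T$, not $v=0$). The term at $u=0$ vanishes because $h(0)=0$, but the term at $u=T$ contributes $-\alpha_H\,h(T)(T-t)^{2H-2}$, which blows up as $t\uparrow T$ unless $h(T)=0$. Your recursion therefore needs $h^{(k)}(T)=0$ for all $k\ge 0$, i.e.\ $h$ vanishing to infinite order at $T$. You recognise this in your final paragraph by invoking ``compact support of $h$ inside $(0,T)$'', but that hypothesis is not part of the lemma as stated; the definition of $\mathcal{C}^\infty_0(\mathbb{R}_+)$ only requires compact support in $[0,\infty)$, which places no constraint on $h|_{[0,T]}$ near $T$. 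In practice this is harmless: for the application in Corollary~\ref{weakderR} one only needs the conclusion on a set of $h$'s dense in $\mathcal{H}$, and compactly supported smooth functions with support in $(0,T)$ certainly suffice. So your proof is correct under that (tacit) reading, and the issue is really an ambiguity in the paper's statement rather than a gap in your reasoning.
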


\begin{corollary}\label{weakderR}
Under the probability space given in Lemma \ref{AWSlemma}, the random variable $\langle X_t,g\rangle_E\in \mathbb{D}^{1,2}_{\text{loc}}(\mathbb{R})$ and $\mathbf{D}\langle X_t,g\rangle_E\in \mathcal{L}_2(U_0;\mathcal{H})$ is the Hilbert-Schmidt linear operator defined by

$$\mathbf{D}\langle X_t,g\rangle_E (\sqrt{\lambda_i}e_i) :=\langle \sqrt{\lambda_i}\Psi^i_{t,\cdot} , g \rangle_E~a.s,$$
for every $t\in [0,T]$ and $g\in E$.
\end{corollary}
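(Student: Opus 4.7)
My plan is to combine the Fréchet differentiability of $\Phi$ from the previous theorem with Corollary \ref{FMall}. Since $\Phi:\Omega \to \hat{\mathcal{C}}^{\kappa,\kappa}_1$ is continuously Fréchet differentiable and both the evaluation map $\tau_t:\hat{\mathcal{C}}^{\kappa,\kappa}_1 \to E$ and $\langle \cdot, g\rangle_E: E \to \mathbb{R}$ are bounded linear, the chain rule implies that $x \mapsto \langle \Phi(x)_t, g\rangle_E$ is Fréchet differentiable on all of $\Omega$, hence in particular along the Cameron--Martin space $\mathbf{H}$. Invoking Corollary \ref{FMall} with $Y = \langle \Phi(\cdot)_t,g\rangle_E$ gives both $\langle X_t,g\rangle_E \in \mathbb{D}^{1,2}_{\text{loc}}(\mathbb{R})$ and the key identity
$$\langle \nabla \Phi(x)(\mathcal{R}_H h)_t, g\rangle_E = \langle \mathbf{D}\langle X_t,g\rangle_E, h\rangle_{\mathcal{L}_2(U_0,\mathcal{H})}$$
valid for every $h \in \mathcal{L}_2(U_0,\mathcal{H})$. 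The task reduces to computing the left-hand side explicitly on a dense class of $h$.

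Next I would take the simple tensor $h = h_0 \otimes \sqrt{\lambda_i}e_i$ with $h_0 \in \mathcal{C}^\infty_0$. By the lemma preceding the corollary, $\mathcal{R}_H h \in \mathcal{C}^\infty_{0,\lambda}$, and by construction only its $i$-th component is non-vanishing, with $(\mathcal{R}_H h)^i = \mathcal{K}_{H,1}(K^*_{H,1} h_0)$. Inserting this into the representation formula (\ref{representationFRECHET}) collapses the sum to a single term,
$$\nabla \Phi(x)(\mathcal{R}_H h)_t = \sqrt{\lambda_i}\int_0^t \Psi^i_{t,u}(x)\, d(\mathcal{R}_H h)^i_u.$$
After pairing with $g$ (and interchanging inner product with the $E_\kappa$-valued Young integral, which is justified since $\langle \cdot,g\rangle_E$ is continuous), one invokes the defining chain $\mathcal{R}_H = \mathcal{K}_H \circ K^*_H$ and the isometry $K^*_{H,1}:\mathcal{H} \to L^2([0,T];\mathbb{R})$ to write
$$\int_0^t \langle\Psi^i_{t,u}(x),g\rangle_E\, d(\mathcal{R}_H h)^i_u = \langle \langle \Psi^i_{t,\cdot}(x),g\rangle_E \mathds{1}_{[0,t]}, h_0\rangle_{\mathcal{H}}.$$
Comparing with the identity above and using the isomorphism $\mathcal{L}_2(U_0,\mathcal{H}) \cong \mathcal{H}\otimes \mathcal{L}_2(U_0,\mathbb{R})$ yields $\langle \mathbf{D}\langle X_t,g\rangle_E(\sqrt{\lambda_i}e_i), h_0\rangle_{\mathcal{H}} = \sqrt{\lambda_i}\langle \langle\Psi^i_{t,\cdot},g\rangle_E, h_0\rangle_{\mathcal{H}}$, and the density of $\mathcal{C}^\infty_0$ in $\mathcal{H}$ identifies the derivative. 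Finally, the operator thus defined is genuinely Hilbert--Schmidt: its squared norm is bounded by $\sum_i \lambda_i \|\langle \Psi^i_{t,\cdot},g\rangle_E\|^2_{\mathcal{H}}$, which is finite thanks to the uniform bound (\ref{supPsi}) on $\|\Psi^i_{t,\cdot}\|_{0,\kappa}$, the inclusion $L^{1/H} \hookrightarrow |\mathcal{H}| \hookrightarrow \mathcal{H}$ from (\ref{inclusions}), and $\operatorname{trace} Q<\infty$.

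The step I expect to be most delicate is the integration-by-parts identity bridging the Young integral against the smooth path $\mathcal{R}_H h$ and the (non-local) inner product on $\mathcal{H}$. One must reduce to a scalar integral by first pairing with $g$, check that $u\mapsto \langle \Psi^i_{t,u}(x),g\rangle_E$ is regular enough to be interpreted in $|\mathcal{H}|$ via (\ref{niceinner}), and carefully handle the fact that the integration interval is $[0,t]$ rather than $[0,T]$ (absorbed by the indicator $\mathds{1}_{[0,t]}$). Once this identity is in place for smooth tensors, the extension to arbitrary $h \in \mathcal{L}_2(U_0,\mathcal{H})$ and the Hilbert--Schmidt verification are routine.
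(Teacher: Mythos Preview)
Your proposal is correct and follows essentially the same route as the paper: invoke Corollary~\ref{FMall} via the chain rule to get $\langle X_t,g\rangle_E\in\mathbb{D}^{1,2}_{\mathrm{loc}}(\mathbb{R})$, test the Fr\'echet derivative against smooth simple tensors using the representation~(\ref{representationFRECHET}), convert the Young integral against $\mathcal{R}_H h$ into an $\mathcal{H}$-inner product, and finish with the Hilbert--Schmidt check from~(\ref{supPsi}) and~(\ref{inclusions}). The only cosmetic difference is that the paper writes out the kernel computation explicitly (differentiating $\mathcal{R}_H v$, applying Fubini, and recognising $K^*_H$) rather than packaging it as your integration-by-parts identity, and tests against $h\otimes\varphi$ with general $\varphi\in\mathcal{L}_2(U_0,\mathbb{R})$ instead of your basis vectors $\sqrt{\lambda_i}e_i$.
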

\begin{proof}
Let us fix $t\in [0,T]$ and $g\in E$. By Lemma \ref{AWSlemma}, we shall represent $X_t(\omega) = \Phi(\omega)_t; (\omega,t)\in \mathcal{W}^{\tilde{\gamma},\delta,\infty}_{\lambda,T}\times [0,T]$. Since $\mathbf{H}\subset \mathcal{W}^{\gamma,\delta,\infty}_{\lambda,T}$, then
$$f\mapsto \langle X_t(f),g\rangle_E =\langle \Phi(f)_t,g\rangle_E $$
is Fr\'echet differentiable at all vectors $f\in \mathbf{H}$. In this case, Corollary \ref{FMall} yields $\langle X_t, g\rangle_E \in \mathbb{D}^{1,2}_{loc}(\mathbb{R})$ and

$$ \langle \nabla\Phi(\cdot)(\mathcal{R}_H v)_t,g\rangle_E = \langle \mathbf{D} \langle X_t, g\rangle_E, v\rangle_{\mathcal{L}_2(U_0;\mathcal{H})}~\text{locally in}~\Omega,$$
for each $v\in \mathcal{L}_2(U_0;\mathcal{H})$. Let us take $v=(h\otimes \varphi)\in \mathcal{C}^\infty_0\otimes \mathcal{L}_2(U_0.\mathbb{R})$. By using (\ref{representationFRECHET})

\begin{equation}
\Big\langle \nabla \Phi(\mathcal{R}_H v)_t, g\Big\rangle_E = \sum_{i\ge 1}\sqrt{\lambda_i}\Bigg\langle \int_0^t \Psi^{i}_{t,u}(x)d(\mathcal{R}_Hv^i)_u, g\Bigg\rangle_E.
\end{equation}
We observe

$$(\mathcal{R}_H v^i)'_u = \int_0^u\frac{\partial K_H}{\partial u}(u,s)K^*_H(h\otimes \varphi)_s(e_i)ds .$$

Therefore,
$$\int_0^t \Psi^{i}_{t,u}(x)d(\mathcal{R}_Hv^i)_u = \sqrt{\lambda_i}\int_0^t \Psi^{i}_{t,u}(x) \Bigg(\int_0^u\frac{\partial K_H}{\partial u}(u,s)K^*_H(h\otimes \varphi)_s(e_i)ds \Bigg)du $$
$$ = \sqrt{\lambda_i}\int_0^T \Psi^{i}_{t,u}(x) \Bigg(\int_0^u\frac{\partial K_H}{\partial u}(u,s)K^*_H(h\otimes \varphi)_s(e_i)ds \Bigg)du$$
$$ = \sqrt{\lambda_i}\int_0^T K^*_H(h\otimes \varphi)_s(e_i)\Bigg(\int_s^T\frac{\partial K_H}{\partial u}(u,s)\Psi^{i}_{t,u}(x)du \Bigg)ds.$$
Then,

$$\langle \nabla\Phi(\mathcal{R}_H v)_t,g\rangle_E = \sum_{i=1}^\infty \lambda_i\int_0^T K^*_H\big(h\otimes \varphi\big)_s(e_i) K^*_H \Big(\langle \Psi^i_{t,\cdot},g\rangle_E\Big)_s ds$$
$$ = \sum_{i=1}^\infty \lambda_i \Big\langle (h\otimes \varphi)(e_i), \langle \Psi^i_{t,\cdot} , g \rangle_E  \Big\rangle_\mathcal{H} $$
$$ = \sum_{i=1}^\infty \Big\langle (h\otimes \varphi)(\sqrt{\lambda_i}e_i), \langle \sqrt{\lambda_i}\Psi^i_{t,\cdot} , g \rangle_E  \Big\rangle_\mathcal{H},$$
where we observe (recall that this function is continuous (except at one point) for every $x\in \Omega$) $\langle \sqrt{\lambda_i}\Psi^i_{t,\cdot}(x) , g \rangle_E \in L^{\frac{1}{H}}([0,T];\mathbb{R})\subset |\mathcal{H}|$.
The candidate is then the linear operator defined by

\begin{equation}\label{derinner}
\mathbf{D}_\cdot \langle X_t, g\rangle_E (\sqrt{\lambda_i}e_i):=\langle \sqrt{\lambda_i}\Psi^i_{t,\cdot} , g \rangle_E~a.s.
\end{equation}

We observe (\ref{derinner}) provides a well-defined Hilbert-Schmidt operator from $U_0$ to $\mathcal{H}$ because

\begin{align}
\nonumber \sum_{i=1}^\infty \lambda_i \int_0^T \big| K^*_H\big(\langle \Psi^i_{t,\cdot}(\omega),g\rangle_E\big)_s & \big|^2ds \nonumber  ~=~ \sum_{i=1}^\infty \lambda_i \int_0^T \Big|\int_s^T\frac{\partial K_H}{\partial u}(u,s)\langle \Psi^{i}_{t,u}(\omega), g\rangle_E du\Big|^2ds\\
\nonumber &
\nonumber \le \int_0^T \Big(\int_s^T \big|\frac{\partial K_H}{\partial u}(u,s)\big|du\Big)^2ds\|g\|_E^2 \sup_{i\ge 1}\|\Psi^i_{t,\cdot}(\omega)\|^2_{0,\kappa} \text{Trace}~Q < \infty,
\end{align}
for each $\omega\in \Omega$. This concludes the proof.
\end{proof}
We are now able to state the main result of this section.

\begin{theorem}
If Assumptions H1-A1-A2 hold true, then $X_t\in \mathbb{D}^{1,2}_{loc}(E)$ for each $t\in [0,T]$ and the following formula holds
\begin{equation}\label{Malliavindereq}
\mathbf{D}_sX_t = S(t-s)G(X_s) + \int_s^t S(t-r)\nabla F(X_r) \mathbf{D}_sX_rdr + \sum_{i=1}^\infty\sqrt{\lambda_i}\int_s^t S(t-r)\nabla G_i(X_r) \mathbf{D}_sX_rd\beta^i_r,
\end{equation}
where $\mathbf{D}_sX_t = 0$ for $s > t$.
\end{theorem}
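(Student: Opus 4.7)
The plan is to assemble the theorem from the pieces already in place: Corollary \ref{weakderR} identifies the candidate Malliavin derivative coordinate-by-coordinate, and Lemma \ref{weakCH} is the lifting device that promotes this weak identification into an $E$-valued Malliavin derivative. The equation \eqref{Malliavindereq} will then be a transcription of the linear equation \eqref{auxiliar} satisfied by $\Psi^i_{t,u}$.

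First, I would define the candidate operator $\xi_t(\omega):U_0\to \mathcal{H}\otimes E$ by
\begin{equation*}
\xi_t(\omega)(\sqrt{\lambda_i}e_i):= \sqrt{\lambda_i}\,\Psi^i_{t,\cdot}(\omega);\quad i\ge 1,
\end{equation*}
where the right-hand side is viewed as an $E_\kappa$-valued (hence $E$-valued) function of time, and via the identification $\mathcal{H}\otimes E\simeq \mathcal{L}_2(U_0;\mathcal{H})\otimes E$ using the inclusion $L^{1/H}([0,T];\mathbb{R})\hookrightarrow |\mathcal{H}|\hookrightarrow \mathcal{H}$ recalled in \eqref{inclusions}. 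Corollary \ref{weakderR} then states exactly that for every $g\in E$, the weak identity \eqref{weakch} holds with $\xi = \xi_t$ and $F = X_t$. So the only remaining input needed for Lemma \ref{weakCH} is local $L^2$-integrability of both $X_t$ and $\xi_t$ with values in the appropriate spaces.

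For the integrability step, I would localize with the stopping-region sequence
\begin{equation*}
\Omega_N:= \Big\{\|B\|_{\mathcal{W}^{\tilde{\gamma},\delta,\infty}_{\lambda,T}} + \|\Phi(B)\|_{\hat{\mathcal{C}}^{\kappa,\kappa}_1}\le N\Big\},\qquad N\ge 1,
\end{equation*}
which forms a localizing family since $B\in \mathcal{W}^{\tilde{\gamma},\delta,\infty}_{\lambda,T}$ a.s.\ and $\Omega_N\uparrow \Omega$. On $\Omega_N$, the bound \eqref{supPsi} and the estimate from the proof of the previous theorem give $\sup_{i\ge 1}\|\Psi^i_{t,\cdot}(\omega)\|_{0,\kappa}\le C_N$ uniformly in $\omega\in \Omega_N$. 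Combining this with the Hilbert-Schmidt computation already performed at the end of the proof of Corollary \ref{weakderR} (using $\text{trace}\,Q<\infty$ and the integrability of $\int_s^T|\partial_u K_H(u,s)|du$) yields $\xi_t\in L^2(\Omega_N;\mathcal{L}_2(U_0;\mathcal{H})\otimes E)$ and trivially $X_t\in L^\infty(\Omega_N; E)$. Lemma \ref{weakCH} applied with $p=2$ then delivers $X_t\in \mathbb{D}^{1,2}_{loc}(E)$ together with the pointwise identification $\mathbf{D}_s X_t(\sqrt{\lambda_i}e_i)= \sqrt{\lambda_i}\Psi^i_{t,s}$ for a.e.\ $s$, $i\ge 1$.

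Finally, I would derive \eqref{Malliavindereq} from the linear equation \eqref{auxiliar} characterizing $\Psi^i_{t,u}$. Multiplying \eqref{auxiliar} by $\sqrt{\lambda_i}$, summing formally over $i\ge 1$ in the operator sense, and using that $\mathbf{D}_s X_t$ is obtained by packaging the family $\{\sqrt{\lambda_i}\Psi^i_{t,s}\}_{i\ge 1}$ into a single $\mathcal{L}_2(U_0;\mathcal{H})\otimes E$-valued object, one obtains
\begin{equation*}
\mathbf{D}_s X_t = S(t-s)G(X_s) + \int_s^t S(t-r)\nabla F(X_r)\mathbf{D}_s X_r\,dr + \sum_{i=1}^\infty \sqrt{\lambda_i}\int_s^t S(t-r)\nabla G_i(X_r)\mathbf{D}_s X_r\, d\beta^i_r,
\end{equation*}
where the stochastic integral is interpreted pathwise in the Young/rough sense of Lemma \ref{consLemma}. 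The convention $\mathbf{D}_s X_t=0$ for $s>t$ follows from the convention $\Psi^i_{t,u}(x)=0$ for $u>t$ introduced just before \eqref{auxiliar}.

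The main obstacle I anticipate is bookkeeping the identification $\mathcal{H}\otimes E\simeq \mathcal{L}_2(U_0;\mathcal{H})\otimes E$ and verifying uniform-in-$i$ bounds carefully enough so that the Hilbert-Schmidt norm of $\xi_t$ is controlled on the localizing sets; everything else (the weak identity, the operator equation) is essentially already present in the preceding results, and only needs to be repackaged.
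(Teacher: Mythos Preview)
Your proposal is correct and follows essentially the same route as the paper: define the candidate $\xi_t(\sqrt{\lambda_i}e_i)=\sqrt{\lambda_i}\Psi^i_{t,\cdot}$, verify it is Hilbert--Schmidt via \eqref{supPsi} and $\text{trace}\,Q<\infty$, invoke Corollary~\ref{weakderR} for the weak identity, and apply Lemma~\ref{weakCH}; the equation \eqref{Malliavindereq} is then read off from \eqref{auxiliar}. The only cosmetic difference is that you make the localization via $\Omega_N$ explicit, whereas the paper leaves it implicit in the ``a.s.'' bounds and carries out the inner-product verification $\langle \mathbf{D}\langle X_t,g\rangle_E,v\rangle_{\mathcal{L}_2(U_0;\mathcal{H})}=\langle \xi_t(g),v\rangle_{\mathcal{L}_2(U_0;\mathcal{H})}$ by hand.
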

\begin{proof}
At first, we observe the postulated object $\mathbf{D} X_t$ takes values on $\mathcal{H}\otimes \mathcal{L}_2(U_0;\mathbb{R})\otimes E \equiv \mathcal{L}_2(U_0;\mathcal{H}\otimes E)$. Let us compute
$$
\Big\langle \mathbf{D} \langle X_t,g\rangle_E, v \Big\rangle_{\mathcal{L}_2(U_0;\mathcal{H})},
$$
for a given $g\in E$ and $v = (\varphi\otimes h)\in\mathcal{L}_2(U_0;\mathcal{H})$. By definition,

\begin{eqnarray}
\nonumber \Big\langle \mathbf{D} \langle X_t,g\rangle_E, v \Big\rangle_{\mathcal{L}_2(U_0;\mathcal{H})}&=&\sum_{i=1}^\infty  \Big \langle \langle \sqrt{\lambda_i}\Psi^i_{t,\cdot} , g \rangle_E, (\varphi\otimes h)(\sqrt{\lambda_i}e_i) \Big\rangle_\mathcal{H}\\
\nonumber & &\\
\nonumber &=& \sum_{i=1}^\infty \varphi(e_i) \lambda_i\Big \langle \langle \Psi^i_{t,\cdot} , g \rangle_E, h \Big\rangle_\mathcal{H}.
\end{eqnarray}

Let us define a Hilbert-Schmidt operator $\Psi_{t,\cdot}(\omega):U_0 \rightarrow L^{\frac{1}{H}}([0,T];E)\hookrightarrow \mathcal{H}\otimes E$ as follows

$$\Psi_{t,\cdot}(\omega)(\sqrt{\lambda_i}e_i):=\sqrt{\lambda_i}\Psi^i_{t,\cdot}(\omega); \omega \in \Omega.$$
By (\ref{inclusions}), there exists a constant $C$ such that

\begin{eqnarray}
\nonumber \sum_{i=1}^\infty \|\Psi_{t,\cdot}(\sqrt{\lambda_i}e_i)\|^2_{\mathcal{H}\otimes E}&\le & C \sum_{i=1}^\infty \|\Psi_{t,\cdot}(\sqrt{\lambda_i}e_i)\|^2_{L^{\frac{1}{H}}([0,T];E)}\\
\nonumber & &\\
\nonumber &\le& C \sum_{i=1}^\infty \lambda_i\|\Psi^i_{t,\cdot}\|^2_{0,\kappa}\le C \sup_{i\ge 1}\|\Psi^i_{t,\cdot}\|_{0,\kappa}.\text{trace}~Q < \infty~a.s.
\end{eqnarray}
We claim that $X_t \in \mathbb{D}^{1,2}_{loc}(E)$ and

\begin{equation}\label{pathwiseMalliavinder}
\mathbf{D}_\cdot X_t = \Psi_{t,\cdot}~a.s.
\end{equation}
Indeed, we observe $\Psi_t$ satisfies

$$\Psi_{t,s}= S(t-s)G(X_s) + \int_s^t S(t-r)\nabla F(X_r) \Psi_{r,s}dr + \sum_{i=1}^\infty\sqrt{\lambda_i}\int_s^t S(t-r)\nabla G_i(X_r) \Psi_{r,s}d\beta^i_r~a.s,$$
where $\Psi_{t,s}=0$ for $t < s$. Moreover,

\begin{eqnarray}
\nonumber \Big\langle \mathbf{D} \langle X_t,g\rangle_E, v \Big\rangle_{\mathcal{L}_2(U_0;\mathcal{H})}&=& \sum_{i=1}^\infty \varphi(e_i) \lambda_i\Big \langle \langle \Psi^i_{t,\cdot} , g \rangle_E, h \Big\rangle_\mathcal{H}\\
\nonumber &=&\sum_{i=1}^\infty\Big \langle \langle \Psi_{t,\cdot}(\sqrt{\lambda_i}e_i) ,g \rangle_E, \varphi(e_i) \sqrt{\lambda_i}h \Big\rangle_\mathcal{H}\\
\nonumber & &\\
\nonumber &=& \langle \mathbf{D}X_t g, v\rangle_{\mathcal{L}_2(U_0;\mathcal{H})}~a.s.
\end{eqnarray}
By applying Lemma \ref{weakCH} and Corollary \ref{weakderR}, we conclude the proof.


\end{proof}
\section{The right inverse of the SPDE Jacobian}\label{jacobianSECTION}
In this section, we investigate the existence of the right inverse of the Jacobian SPDE operator under some algebraic constraints on the vector fields combined with the range of the semigroup. From now on, it will be useful to make clear the dependence on the initial conditions of (\ref{SPDE1}). Let us write $X^{y}$ as the solution of (\ref{SPDE1}) for an initial condition $y\in E_\kappa$. In previous section, we made use of the $\hat{\mathcal{C}}^{\kappa,\kappa}_1$-topology to get differentiability of $X^{x_0}_t$ (in Malliavin's sense) for each initial condition at $x_0\in E_\kappa$. Even though we are interested in establishing the existence of densities $\mathcal{T}( X^{x_0}_t)$ for initial conditions on $\text{dom}~(A^\infty)\subset E_\kappa$, it is important to work with the solution map $E\rightarrow \mathcal{C}^{\alpha,0}_1$ given by

\begin{equation}\label{Emap}
y\mapsto X^y\in \mathcal{C}^{\alpha,0}_1,
\end{equation}
for some $\alpha > 1-H$. One drawback to keep the flow from $E_\kappa$ to $\hat{\mathcal{C}}^{\kappa,\kappa}_1$ is that $X^{x_0}$ does not belong to $\mathcal{C}^{\kappa,\kappa}_1$ and the best we can get is $X^{x_0}\in \mathcal{C}^{\kappa,0}_1$ a.s. For this purpose, we need to impose further regularity assumptions as described in Th 3.2 in \cite{nualart1}, which we list here for the sake of preciseness:

\

\noindent \textbf{Assumption A3:} There exists $\gamma_1, \gamma_2 \in (0,(2H-1)\wedge \frac{3}{4})$ and $c_1$ such that

$$\|S(r)G(x)\|\le \frac{c_1}{r^{\gamma_1}}(1+ \|x\|_E),$$

$$\|S(r) \big(G(x) - G(y)\big)\|\le \frac{c_1}{r^{\gamma_2}}\|x-y\|_E,$$
for every $x,y\in E$. Furthermore, for $\alpha > 1-H$, $\alpha <  \frac{1}{2}(1-\gamma_1)\wedge \frac{1}{2}(1-\gamma_2)$, assume there exist constants $c_2>0, \eta \in [0, 1-\alpha)$ and $\tilde{\beta}\in (\alpha,\frac{1}{2})$ such that

$$\|\nabla_x S(r)F(x)\| + \|\nabla_x S(r) G_i(x)\|\le c_2,$$
$$\|\nabla_x S(r)F(x) - \nabla_y S(r) F(y)\| +\|\nabla_x S(r)G_i(x) - \nabla_y S(r) G_i(y)\|\le \frac{c_2}{r^\eta}\|x-y\|_E, $$
$$\|\nabla_x (S(r)-S(s))F(x)\| + \|\nabla_x \big(S(r)-S(s)\big)G_i(x)\|\le c_2(r-s)^{\tilde{\beta}}s^{-\tilde{\beta}},$$
for every $r\in (0,T], 0 < s< r, x, y \in E$ and $i\ge 1$.

\

Under these conditions, the map (\ref{Emap}) is well-defined (see Th 3.2 in \cite{nualart1}). Moreover, it is not difficult to check the map $E \ni y\mapsto X^y\in \mathcal{C}^{\alpha,0}_1$ is Fr\'echet differentiable. In other words, the Jacobian

$$\mathbf{J}_{0\rightarrow t}(y;v) :=\nabla_y X^y_t(v)$$
is well defined for each $t\in [0,T]$ and $y,v\in E$. The proof of this fact is quite standard and the main arguments do not differ too much from the classical Brownian motion driving case (see e.g Th. 3.9 in \cite{gawarecki}), so we left the details to the reader. Moreover, (see \cite{nualart2}) for a given $\alpha \in \big(1-H, \frac{1}{2}\big)$ satisfying Assumption A3, we shall take $\kappa \in \big(\frac{1}{4}, \frac{1}{2}\big)$ with $\kappa=\alpha+\epsilon$ and $0 < \epsilon < \alpha$ such that

$$\mathcal{C}^{\kappa,0}_1 \subset W^{\alpha,\infty}(0,T;E),$$
where $W^{\alpha,\infty}(0,T;E)$ is the space of all measurable functions $f:[0,T]\rightarrow E$ such that

$$\|f\|_{\alpha,\infty}:= \Big(|f|_{0,0} + \sup_{0\le t\le T}\int_0^t \frac{\|f(t)-f(s)\|_{E}}{|t-s|^{1+\alpha}}\Big)< \infty.$$
Therefore, under Assumptions H1 and A3, the uniqueness of the flow described in Th 3.2 in \cite{nualart1} and (\ref{embed}) imply that all solutions $X^y$ generated by Proposition \ref{exisuniq} coincides with the ones given by \cite{nualart1} for every $y\in E_\kappa$. In addition, by applying Th 3.2 in \cite{nualart1}, $\mathbf{J}_{0\rightarrow t}(y;v)$ satisfies the following linear equation

\begin{equation}\label{jacobianSPDE}
\mathbf{J}_{0\rightarrow t}(y;v) = S(t)v + \int_0^t S(t-s) \nabla F(X^y_s)\mathbf{J}_{0\rightarrow s}(y;v)ds + \sum_{i=1}^\infty \sqrt{\lambda_i} \int_0^t S(t-s) \nabla G_i(X^y_s)\mathbf{J}_{0\rightarrow s}(y;v)d\beta^i_s.
\end{equation}
Of course, $v\mapsto \mathbf{J}_{0\rightarrow t}(y;v)\in \mathcal{L}(E; E)$ for each $t\in [0,T]$ and $y\in E$. Then, we shall see $t\mapsto \mathbf{J}_{0\rightarrow t}(y)$ as an operator-valued process as follows

$$\mathbf{J}_{0\rightarrow t}(y) = S(t) + \int_0^t S(t-s) \nabla F(X^y_s)\mathbf{J}_{0\rightarrow s}(y)ds + \sum_{i=1}^\infty \sqrt{\lambda_i} \int_0^t S(t-s) \nabla G_i(X^y_s)\mathbf{J}_{0\rightarrow s}(y)d\beta^i_s; 0\le t\le T.$$

\begin{remark}
Recall that infinitesimal generators of analytic semigroups are sectorial. Then, it is known (see e.g Corollary 2.1.7 in \cite{lunardi}) that $S(t)$ is one-to-one for every $t\ge 0$. We also observe the left-inverse linear operator $S(-t)$ of $S(t)$ defined on the subspace $S(t)E$ is, in general, unbounded.
\end{remark}

\begin{example}
Let $E = L^2(0,1)$ with Dirichlet boundary conditions. Take the orthonormal basis

$$e_n(x) = \sqrt{2}\text{sin}(\pi n x);0 < x < 1,$$
with eigenvalues $\lambda_n = \pi^2 n$. Then, the heat semigroup generated by the Laplacian $A = \Delta$ is given by

$$S(t)f = \sum_{n=1}^\infty e^{-\lambda_n t}\langle f, e_n\rangle_E e_n, $$
for $f\in E$. This is an analytic semigroup whose left-inverse is equal to

$$S(-t)g = \sum_{n=1}^\infty e^{\lambda_n t}\langle g, e_n\rangle_E e_n,$$
for $g\in S(t)E$.
\end{example}


In order to obtain a right-inverse operator-valued process for the Jacobian, we need to assume the following regularity conditions. In the sequel, we denote $S^-(t):=S(-t); t\ge 0$ where $S(-t)$ stands the left-inverse linear operator on $S(t)E$.

\

\noindent \textbf{Assumption B1}: Let $\alpha > 1-H$ be a constant as defined in Assumption A3. For each path $f\in \mathcal{C}^{\alpha,0}_1$,

$$\sup_{i\ge 1}\Big\{\|S^- \nabla G_i(f)S\|_{0,0\rightarrow 0} + \|\delta S^- \nabla G_i(f)S\|_{\mu,0\rightarrow 0}\Big\} < \infty,$$
for $\mu + \tilde{\gamma} > 1$, where $\frac{1}{2} <  \tilde{\gamma} < H$ satisfies (\ref{Aindices}).

\

\noindent \textbf{Assumption B2}: For each path $f\in \mathcal{C}^{\alpha,0}_1$, $ \|S^-\nabla F(f)S\|_{0,0\rightarrow 0} < \infty$.

\

In Assumptions B1-B2, we assume
\begin{equation}\label{Sminusalg}
\nabla F(w)z\in S(T)E~\text{and}~\nabla G_i(w)z\in S(T)E,
\end{equation}
for every $w,z\in E$ and $i\ge 1$.

\begin{remark}\label{S(T)}
Since $S(T)z = S(t)S(T-t)z$ for every $0\le t\le T$ and $z\in E$, then $S(T)E\subset S(t)E_\beta$ for every $0\le t\le T$ and $\beta\ge 0$.
\end{remark}

\begin{remark}
We stress we implicitly assume in Assumptions B1-B2 that $\nabla F(f_t)S(t)x\in S(t)E$ and $\nabla G_i(f_t) S(t)x\in S(t)E$ for every $t\ge0$, $x\in E$ and $i\ge 1$. This property holds true under (\ref{Sminusalg}) due to Remark \ref{S(T)}. In this case, taking into account that $S$ is a differentiable semigroup, then (see e.g Prop 3.12 in \cite{ito}) we have $\nabla F(w)z\in\cap_{n=1}^\infty\text{dom}(A^n)$ and $\nabla G_i(w)z\in \cap_{n=1}^\infty \text{dom}(A^n)$ for every $w,z\in E$ and $i\ge 1$.
\end{remark}


In the sequel, we freeze an initial condition $y\in E_\kappa$. Let us now investigate the existence of an operator-valued process $\mathbf{J}^+_{0\rightarrow t}(y)$ such that

$$\mathbf{J}_{0\rightarrow t}(y) \mathbf{J}^+_{0\rightarrow t}(y) =\text{Id}~a.s; 0\le t\le T,$$
where $\text{Id}$ is the identity operator on $S(t)E$. We start the analysis with the following equation

\begin{eqnarray}
\nonumber U_t(y) &=& -\int_0^t \big[\text{Id} + U_r(y)\big] S(-r)\nabla F (X^y_r)S(r)dr\\
\label{UeqINT1}& &\\
\nonumber&-&\sum_{i=1}^\infty \sqrt{\lambda_i}\int_0^t \big[\text{Id}+ U_r(y)\big] S(-r)\nabla G_i(X^y_r)S(r)d\beta^i_r~.
\end{eqnarray}

Let $\mathcal{C}_1^{\mu,0\rightarrow 0}$ be the linear space of $\mathcal{L}(E;E)$-valued functions $r\mapsto f_r$ such that

$$\|f\|_{\mathcal{C}_1^{\mu,0\rightarrow 0}}:=\|f\|_{0,0\rightarrow 0} + \|\delta f\|_{\mu,0\rightarrow 0} < \infty.$$

\begin{lemma} \label{fixpointjac}
Under Assumptions B1-B2, there exists a unique adapted solution $U(y)$ of (\ref{UeqINT1}) such that $U(y)\in \mathcal{C}^{\mu,0\rightarrow0}_1$ a.s for $\mu +\tilde{\gamma}>1$ and $0 < \mu < \tilde{\gamma}$.
\end{lemma}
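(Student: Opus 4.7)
The plan is to solve \eqref{UeqINT1} by a Picard/Banach fixed-point argument directly in the Banach space $\mathcal{C}_1^{\mu,0\to 0}$ using Young/Gubinelli integration for operator-valued integrands. Since $\mu+\tilde{\gamma}>1$, the stochastic integral on the right-hand side is interpretable pathwise via the (flat, non-convolutional) sewing lemma applied to the germ $[\mathrm{Id}+U_r]\,S(-r)\nabla G_i(X^y_r)S(r)\,(\delta\beta^i)_{rs}$, an operator-valued increment. Assumption B1 gives $\mathbb{P}$-a.s.\ uniform bounds in $i\ge 1$ on the $\mathcal{C}_1^{\mu,0\to 0}$-norm of $r\mapsto S(-r)\nabla G_i(X^y_r)S(r)$, Assumption B2 does the same for the drift coefficient, and $\sum_i\sqrt{\lambda_i}<\infty$ together with $\beta^i\in \mathcal{W}^{\tilde\gamma,\delta}_T$ a.s.\ ensures convergence of the $i$-series in $\mathcal{L}(E;E)$.

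First I would define the solution map $\Lambda:\mathcal{C}_1^{\mu,0\to 0}\to \mathcal{C}_1^{\mu,0\to 0}$ by $\Lambda(U)_t=\text{RHS of }\eqref{UeqINT1}$. For the Lebesgue term, Assumption B2 yields $\|\delta\Lambda^{(1)}(U)\|_{\mu,0\to 0}\le C|T|^{1-\mu}(1+\|U\|_{0,0\to 0})$. For the Young term, apply the Love–Young inequality (equivalently, the sewing lemma with exponent $\mu+\tilde\gamma>1$) to each summand: there is a constant $C$ such that, writing $A^i_r:=S(-r)\nabla G_i(X^y_r)S(r)$,
\begin{equation*}
\Bigl\|\int_s^t[\mathrm{Id}+U_r]A^i_r\,d\beta^i_r\Bigr\|_{0\to 0}\le C\|\beta^i\|_{\tilde\gamma}\bigl(\|\mathrm{Id}+U\|_{0,0\to 0}\|A^i\|_{0,0\to 0}|t-s|^{\tilde\gamma}+\text{cross terms}|t-s|^{\mu+\tilde\gamma}\bigr).
\end{equation*}
Taking the supremum over $i$ in the coefficient norms (Assumption B1) and summing with the weights $\sqrt{\lambda_i}$ gives a finite random bound $R(y,\omega)$ such that
\begin{equation*}
\|\Lambda(U)\|_{\mathcal{C}_1^{\mu,0\to 0}}\le CR(y,\omega)\bigl(1+\|U\|_{\mathcal{C}_1^{\mu,0\to 0}}\bigr).
\end{equation*}

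Next, by linearity of the map $U\mapsto \Lambda(U)$, the same Young estimate applied to $U-V$ yields, on an interval $[0,T_0]$,
\begin{equation*}
\|\Lambda(U)-\Lambda(V)\|_{\mathcal{C}_1^{\mu,0\to 0}(0,T_0)}\le CR(y,\omega)\bigl(T_0^{1-\mu}+T_0^{\tilde\gamma-\mu}\bigr)\|U-V\|_{\mathcal{C}_1^{\mu,0\to 0}(0,T_0)}.
\end{equation*}
Choosing $T_0=T_0(\omega)$ small enough makes $\Lambda$ a strict contraction, producing a unique fixed point on $[0,T_0]$. Since the equation is linear in $U$, standard patching (reinitializing at $T_0,2T_0,\ldots$ and reabsorbing the initial datum into the inhomogeneous drift/Young term generated by $\mathrm{Id}$) extends the unique solution to $[0,T]$. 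Adaptedness follows because each Picard iterate starting from $U^{(0)}\equiv 0$ is a measurable functional of $\beta^{\cdot}|_{[0,t]}$, and the $\mathcal{C}_1^{\mu,0\to 0}$-limit preserves adaptedness.

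The main obstacle will be the uniform-in-$i$ control required to sum the Young estimates: the sewing lemma produces multiplicative constants depending on the Hölder norms of the integrand, and these must be bounded independently of $i$ so that the series $\sum_i\sqrt{\lambda_i}(\cdots)$ converges. This is exactly where the uniformity built into Assumptions B1–B2, combined with the trace-class summability $\sum_i\sqrt{\lambda_i}<\infty$, is decisive; once the uniform germ bounds and the algebraic identity $\hat\delta=\delta$ (no semigroup distortion here, since $A^i_r$ already absorbs $S(\pm r)$) are in place, the rest of the argument is classical Young SDE theory.
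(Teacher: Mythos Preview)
Your proposal is correct and follows essentially the same approach as the paper: a Banach fixed-point argument in $\mathcal{C}_1^{\mu,0\to 0}$, with the stochastic term controlled via the Young--Loeve inequality (Assumption~B1 supplying the uniform-in-$i$ H\"older bounds on $S^-\nabla G_i(X^y)S$, and $\sum_i\sqrt{\lambda_i}<\infty$ ensuring summability), the drift term via Assumption~B2, contraction on a short interval, and a standard patching to reach $[0,T]$. The paper's proof is organized identically, with the contraction constant written out as $C_F(T^{1-\mu}+T)+C_G\|g\|_{\mathcal{W}^{\tilde\gamma,\delta,\infty}_{\lambda,T}}(2T^{\tilde\gamma}+T^{\tilde\gamma-\mu}+T^{\mu+\tilde\gamma})$, matching your $T_0^{1-\mu}+T_0^{\tilde\gamma-\mu}$ to leading order.
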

\begin{proof}

For a given $g\in \mathcal{W}^{\tilde{\gamma},\delta,\infty}_{\lambda,T}$ and $w\in \mathcal{C}^{\alpha,0}_1$, let us define  $\Gamma:\mathcal{C}_1^{\mu,0\rightarrow 0}\rightarrow \mathcal{C}_1^{\mu,0\rightarrow 0}$ by

\begin{eqnarray}
\nonumber \Gamma(U)_t&:=& -\int_0^t \big[\text{Id} + U_r\big] S(-r)\nabla F (w_r)S(r)dr\\
\nonumber & &\\
\nonumber &-&\sum_{i=1}^\infty \sqrt{\lambda_i}\int_0^t \big[\text{Id}+ U_r\big] S(-r)\nabla G_i(w_r)S(r)dg^i_r~.
\end{eqnarray}
We claim that $\Gamma$ is a contraction map on a small interval $[0,T]$. Indeed, for $U,V\in \mathcal{C}_1^{\mu,0\rightarrow 0}$, if $q_t = \Gamma(U)_t - \Gamma(V)_t$, then

\begin{eqnarray}
\nonumber q_t &=& \int_0^t \big[V_r  - U_r\big] S(-r)\nabla F (w_r)S(r)dr\\
\nonumber & &\\
\nonumber &+& \sum_{i=1}^\infty \sqrt{\lambda_i}\int_0^t \big[V_r- U_r\big] S(-r)\nabla G_i(w_r)S(r)dg^i_r=:q^1_t +\sum_{i=1}^\infty q^{2,i}_t.
\end{eqnarray}

Assumption B2 implies the existence of a constant $C_F$ such that

\begin{eqnarray}
\nonumber\|q^1\|_{0,0\rightarrow 0}=\sup_{0\le t\le T}\|q^1_t\|&\le& \int_0^T\Big\|\big[V_r  - U_r\big] S(-r)\nabla F (w_r)S(r)\Big\|_{0\rightarrow 0}dr\\
\label{jacex1}& &\\
\nonumber&\le &C_{F}T \|U-V\|_{0,0\rightarrow 0},
\end{eqnarray}

\begin{eqnarray}
\nonumber \frac{\|q^1_t-q^1_s\|}{|t-s|^\mu}\le  C_F \|U-V\|_{0,0\rightarrow 0}|t-s|^{1-\mu}\le C_FT^{1-\mu}\|U-V\|_{0,0\rightarrow 0}.
\end{eqnarray}
Then,

\begin{equation}\label{jacex2}
\|\delta q^1\|_{\mu,0\rightarrow 0}\le C_FT^{1-\mu}\|U-V\|_{0,0\rightarrow 0}.
\end{equation}
Young-Loeve's inequality yields

\begin{eqnarray}
\nonumber\Big\|\sum_{i=1}^\infty (q^{2,i}_t-q^{2,i}_s)\Big\| &\le& \frac{1}{2^{\mu+\tilde{\gamma}} - 2} \sum_{i=1}^\infty\big\| \delta [V-U] S^-\nabla G_i(w)S\big\|_{\mu,0\rightarrow 0}\big\|g^i\big\|_{\mathcal{W}^{\tilde{\gamma},\delta}_{T}}|t-s|^{\mu+\tilde{\gamma}}\sqrt{\lambda_i}\\
\nonumber& &\\
\nonumber&+& \sum_{i=1}^{\infty}\big\|[V_s-U_s]S(-s)\nabla G_i(w_s)S(s)\big\| |(\delta g^i_{ts})|\sqrt{\lambda_i}\\
\nonumber& &\\
\nonumber&\le& \frac{1}{2^{\mu+\tilde{\gamma}} - 2}  \sum_{i=1}^\infty\big\| \delta [V-U] S^-\nabla G_i(w)S\big\|_{\mu,0\rightarrow 0}\big\|g^i\big\|_{\mathcal{W}^{\tilde{\gamma},\delta}_{T}}|t-s|^{\mu+\tilde{\gamma}}\sqrt{\lambda_i}\\
\label{jacex3}& &\\
\nonumber&+& \sum_{i=1}^\infty\big\|[V_s-U_s]S(-s)\nabla G_i(w_s)S(s)\big\|\|g^i\|_{\mathcal{W}^{\tilde{\gamma},\delta}_{T}}|t-s|^{\tilde{\gamma}}\sqrt{\lambda_i},
\end{eqnarray}
where by linearity, we have

\begin{eqnarray}
\nonumber\big\| \delta [V-U] S^-\nabla G_i(w)S\big\|_{\mu,0\rightarrow 0}&\le& \|S^-\nabla G_i(w)S\|_{0,0\rightarrow 0}\|\delta (V-U)\|_{\mu,0\rightarrow 0}\\
\nonumber& &\\
\nonumber&+& \|V-U\|_{0,0\rightarrow 0}\|\delta S^-\nabla G_i(w_\cdot)S\|_{\mu,0\rightarrow 0}\\
\label{jacex4}& &\\
\nonumber&\le& C_G\|V-U\|_{\mathcal{C}^{\mu,0\rightarrow 0}_1},
\end{eqnarray}
for a constant $C_G$ coming from Assumption B1. Summing up (\ref{jacex3}) and (\ref{jacex4}), we have

\begin{equation}\label{jacex5}
\Big\|\sum_{i=1}^\infty \delta q^{2,i}\Big\|_{\mu,0\rightarrow 0}\le C_G\|V-U\|_{\mathcal{C}^{\mu,0\rightarrow 0}_1}T^{\tilde{\gamma}}\|g\|_{\mathcal{W}^{\tilde{\gamma},\delta,\infty}_{\lambda,T}} + C_G\|V-U\|_{\mathcal{C}^{\mu,0\rightarrow 0}_1}\|g\|_{\mathcal{W}^{\tilde{\gamma},\delta,\infty}_{\lambda,T}}T^{\tilde{\gamma}-\mu},
\end{equation}
where we recall $\tilde{\gamma}> \mu$. In addition, (\ref{jacex3}) yields

\begin{equation}\label{jacex6}
\Big\|\sum_{i=1}^\infty q^{2,i}\Big\|_{0,0\rightarrow 0}\le  C_G\|V-U\|_{\mathcal{C}^{\mu,0\rightarrow 0}_1}T^{\mu+\tilde{\gamma}}\|g\|_{\mathcal{W}^{\tilde{\gamma},\delta,\infty}_{\lambda,T}} +  C_G\|V-U\|_{\mathcal{C}^{\mu,\kappa\rightarrow \kappa}_1}\|g\|_{\mathcal{W}^{\tilde{\gamma},\delta,\infty}_{\lambda,T}}T^{\tilde{\gamma}}.
\end{equation}

Summing up (\ref{jacex1}), (\ref{jacex2}), (\ref{jacex5}) and (\ref{jacex6}), we conclude

\begin{equation}\label{jacex7}
\|\Gamma(U)-\Gamma(V)\|_{\mathcal{C}^{\mu,0\rightarrow 0}_1}\le \Big[C_F(T^{1-\mu} + T) + (C_G\|g\|_{\mathcal{W}^{\tilde{\gamma},\delta,\infty}_{\lambda,T}})(2T^{\tilde{\gamma}}+T^{\tilde{\gamma}-\mu} + T^{\mu+\tilde{\gamma}})\Big]\|U-V\|_{\mathcal{C}^{\mu,0\rightarrow 0}_1}.
\end{equation}
By making $T$ small in (\ref{jacex7}), we conclude there exists a unique fixed point for $\Gamma$ on small interval $[0,\bar{T}]$ whose size does not depend on the initial condition. The construction of a global unique solution from the solution in $[0,\bar{T}]$ is standard and it is left to the reader for sake of conciseness. This pathwise argument clearly provides a unique adapted process $U$ realizing (\ref{UeqINT1}).
\end{proof}

Now, we set $R_t(y) = U_t(y) + \text{Id}$ and we observe that

\begin{eqnarray}
\nonumber R_t(y) &=& \text{Id} - \int_0^t R_s(y)S(-s)\nabla F(X^y_s)S(s)ds \\ \label{equest1} & &\\
\nonumber &-& \sum_{i=1}^\infty \sqrt{\lambda_i}\int_0^t R_s(y)S(-s)\nabla G_i(X^y_s)S(s)d\beta^i_s; 0\le t\le T.
\end{eqnarray}
We arrive at the following result which will play a key role in representing the Malliavin matrix.
\begin{proposition}
 If Assumptions H1-A1-A2-A3-B1-B2 hold, then for each initial condition $y\in E_k$, the Jacobian $\mathbf{J}_{0\rightarrow t}(y)$ admits a right-inverse adapted process $\mathbf{J}^+_{0\rightarrow t}(y)$ which satisfies

\begin{eqnarray}
\nonumber \mathbf{J}^+_{0\rightarrow t}(y) &=& S(-t) - \int_0^t\mathbf{J}^+_{0\rightarrow s}(y)\nabla F(X^y_s)S(s-t)ds\\
\label{Jplus} & &\\
\nonumber &-&\sum_{i=1}^\infty\sqrt{\lambda_i}\int_0^t \mathbf{J}^+_{0\rightarrow s}(y)\nabla G_i (X^y_s)S(s-t)d\beta^i_s; 0\le t\le T.
\end{eqnarray}
\end{proposition}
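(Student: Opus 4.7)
\textbf{Candidate.} Given the adapted process $R(y) \in \mathcal{C}_1^{\mu, 0 \to 0}$ constructed in Lemma \ref{fixpointjac}, the natural candidate for the right-inverse is
$$
\mathbf{J}^+_{0\rightarrow t}(y) \;:=\; R_t(y)\,S(-t),
$$
defined on the dense subspace $S(t)E$ of $E$. My plan has two parts: (i) show that $\mathbf{J}^+$ satisfies the integral equation (\ref{Jplus}); (ii) verify the right-inverse identity $\mathbf{J}_{0\to t}(y)\,\mathbf{J}^+_{0\to t}(y) = \mathrm{Id}$ on $S(t)E$.

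\textbf{Derivation of (\ref{Jplus}).} I would multiply the equation (\ref{equest1}) for $R_t(y)$ on the right by $S(-t)$. For each $z \in S(t)E$, write $z = S(t)v$ for some $v \in E$. On this domain one has $S(s)\,S(-t)\,z = S(s)v$, which coincides with $S(s-t)\,z$ interpreting $S(s-t) = S^-(t-s)$ as the left inverse of $S(t-s)$ (well-defined since $z \in S(t)E \subset S(t-s)E$). Thus
$$
\mathbf{J}^+_{0\to t}(y)\,z = S(-t)z - \int_0^t R_s(y)\,S(-s)\,\nabla F(X^y_s)\,S(s-t)z\,ds - \sum_{i\ge 1}\sqrt{\lambda_i}\int_0^t R_s(y)\,S(-s)\,\nabla G_i(X^y_s)\,S(s-t)z\,d\beta^i_s.
$$
Assumption (\ref{Sminusalg}) combined with Remark \ref{S(T)} is crucial here: $\nabla F(X^y_s)\,S(s-t)z$ and $\nabla G_i(X^y_s)\,S(s-t)z$ lie in $S(T)E \subset S(s)E$, so $R_s(y)S(-s)$ applied to these vectors equals $\mathbf{J}^+_{0\to s}(y)$ acting on them, which yields (\ref{Jplus}).

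\textbf{The inverse identity.} The identity $\mathbf{J}_{0\to t}(y)\,\mathbf{J}^+_{0\to t}(y) = \mathrm{Id}$ on $S(t)E$ reduces (after writing $z=S(t)v$) to the operator identity
$$
\mathbf{J}_{0\to t}(y)\,R_t(y) = S(t) \quad \text{as operators on } E, \; t \in [0,T].
$$
To verify this, I would approximate $(\beta^i)_{i\ge 1}$ by a sequence $(\beta^{i,n})_{i\ge 1}$ of smooth paths converging in the $\mathcal{W}^{\tilde{\gamma},\delta,\infty}_{\lambda,T}$-norm, and consider the corresponding smooth versions $X^{y,n}$, $\mathbf{J}^n_{0\to t}(y)$ and $R^n_t(y)$. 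For smooth drivers the processes are strongly differentiable (on appropriate domains) and the classical Leibniz rule applies to $M^n_t := \mathbf{J}^n_{0\to t}(y)\,R^n_t(y)$. Using the equations defining $\mathbf{J}^n$ and $R^n$ one obtains
$$
\frac{d}{dt}M^n_t \;=\; A M^n_t + \bigl[\nabla F(X^{y,n}_t) M^n_t - M^n_t\,S(-t)\nabla F(X^{y,n}_t)S(t)\bigr] + \sum_{i\ge 1}\sqrt{\lambda_i}\bigl[\nabla G_i(X^{y,n}_t)M^n_t - M^n_t S(-t)\nabla G_i(X^{y,n}_t)S(t)\bigr]\dot\beta^{i,n}_t.
$$
The algebraic constraint (\ref{Sminusalg}) guarantees $\nabla F(X^{y,n}_t)S(t)$ and $\nabla G_i(X^{y,n}_t)S(t)$ lie in $S(t)E$, so that $S(t)S(-t)$ acts as the identity on these vectors. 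Substituting the ansatz $M^n_t = S(t)$ makes every bracketed term vanish, and the equation reduces to $\dot S(t) = A S(t)$ with $S(0) = \mathrm{Id}$. By uniqueness we deduce $M^n_t = S(t)$ for every $n$. Finally, stability of (\ref{jacobianSPDE}) (via Assumption A3 and the estimates underlying Corollary \ref{linearequation1}) and of (\ref{equest1}) (via the fixed-point argument of Lemma \ref{fixpointjac}, which is Lipschitz-continuous in the driver $\|\cdot\|_{\mathcal{W}^{\tilde\gamma,\delta,\infty}_{\lambda,T}}$) allow us to pass to the limit $n\to\infty$ and conclude $\mathbf{J}_{0\to t}(y)R_t(y) = S(t)$.

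\textbf{Main obstacle.} The delicate point is making the formal Leibniz rule rigorous despite the unbounded generator $A$ and the Young/rough integration nature of the $d\beta^i$-terms. The Wong--Zakai-type approximation sidesteps the first issue by reducing everything to classical ODEs, but one must then justify the convergence $\mathbf{J}^n_{0\to t}(y)\,R^n_t(y) \to \mathbf{J}_{0\to t}(y)\,R_t(y)$ in an operator topology compatible with the continuity estimates available from Lemma \ref{fixpointjac} and the Young integral bounds of Lemma \ref{tecdeltaZ}. An alternative, avoiding approximation, is a direct pathwise integration-by-parts in the rough/Young framework; in either route, the algebraic hypothesis (\ref{Sminusalg}) is precisely what makes the semigroup-twisted cross-terms collapse.
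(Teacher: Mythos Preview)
Your candidate $\mathbf{J}^+_{0\to t}(y)=R_t(y)S(-t)$ and the derivation of (\ref{Jplus}) match the paper exactly. The divergence is in how you verify $\mathbf{J}_{0\to t}(y)R_t(y)=S(t)$. The paper does \emph{not} approximate; instead it introduces a companion process $P_t(y)$ solving the same equation as $R_t(y)$ but with the multiplication order reversed,
\[
P_t(y)=\mathrm{Id}+\int_0^t S(-s)\nabla F(X^y_s)S(s)\,P_s(y)\,ds+\sum_{i\ge 1}\sqrt{\lambda_i}\int_0^t S(-s)\nabla G_i(X^y_s)S(s)\,P_s(y)\,d\beta^i_s,
\]
and observes the factorization $\mathbf{J}_{0\to t}(y)=S(t)P_t(y)$. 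This reduces the goal to the \emph{bounded}-operator identity $P_t(y)R_t(y)=\mathrm{Id}$, which the paper then obtains by a direct pathwise (Young) integration-by-parts: expanding $\langle R_t w,P_t^*w'\rangle_E$ via the bilinear product rule, the resulting cross terms assemble into a linear equation of the same fixed-point type as Lemma~\ref{fixpointjac} for $Q_t:=P_tR_t-\mathrm{Id}$, whose unique solution is zero.

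What the paper's route buys is that the unbounded generator $A$ is completely absorbed into the factor $S(t)$ and never reappears in the product computation, so the argument stays entirely within the $\mathcal{C}_1^{\mu,0\to 0}$ framework already set up and requires no limiting step. Your Wong--Zakai route is plausible in principle---indeed (\ref{Sminusalg}) gives enough spatial regularity that the smooth $\mathbf{J}^n$ are strong solutions---but the convergence $\mathbf{J}^n R^n\to\mathbf{J}R$ you flag is genuine extra work, and your ODE for $M^n_t$ still carries the term $AM^n_t$, which is precisely what the factorization sidesteps. The ``alternative'' you mention at the end (direct pathwise integration by parts) is exactly what the paper does, once $P_t$ is in hand; the missing idea in your proposal is that factorization.
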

\begin{proof}
The candidate is $\mathbf{J}^+_{0\rightarrow t}(y):= R_t(y) S^-(t)$ defined on $S(t)E$. At first, we observe

$$S(s)S(-t)=S(s-t)~\text{on}~S(t)E\subset S(t-s)E,$$
for every $s < t$. Then, (\ref{Jplus}) is well-defined in view of Assumptions B1-B2. Let us check it is the right-inverse. Let
\begin{eqnarray}
\nonumber V_t(y) &=& \int_0^t S(-s)\nabla F (X^y_s)S(s) \big[\text{Id} + V_s(y)\big] ds\\
\label{UeqINT2}& &\\
\nonumber&+&\sum_{i=1}^\infty \sqrt{\lambda_i}\int_0^t S(-s)\nabla G_i(X^y_s)S(s) \big[\text{Id}+ V_s(y)\big] d\beta^i_s;0\le t\le T.
\end{eqnarray}
By following a similar proof of Lemma \ref{fixpointjac}, we can safely state there exists a unique adapted solution $V(y)$ of (\ref{UeqINT2}) such that $V(y)\in \mathcal{C}^{\mu,0\rightarrow 0}_1$ a.s for $\mu < \tilde{\gamma}$ and $\mu + \tilde{\gamma}>1$. Let us define $P_t(y) = V_t(y) + \text{Id}$ and notice that $S(t)S(-s) = S(t-s)$ on $S(s)E$ for every $t>s\ge 0$. Then,
\begin{eqnarray}
\nonumber P_t(y) &=& \text{Id} + \int_0^t S(-s)\nabla F(X^y_s)S(s)  P_s(y) ds \\ \label{equest2} & &\\
\nonumber &+& \sum_{i=1}^\infty \sqrt{\lambda_i}\int_0^t S(-s)\nabla G_i(X^y_s)S(s) P_s(y) d\beta^i_s~,
\end{eqnarray}
and therefore $\mathbf{J}_{0\rightarrow t}(y) = S(t)P_t(y)$. Equations (\ref{equest1}), (\ref{equest2}) and integration by parts in Hilbert spaces yield

\begin{eqnarray}
\nonumber \big\langle P_t(y) R_t(y) w, w^\prime \big\rangle_{E} = \big\langle R_t(y) w, P^*_t(y) w^\prime \big\rangle_{E} &=& \big\langle w, w^\prime \big\rangle_{E} + \int_0^t \big\langle dR_s(y) w, P^*_s(y) w^\prime \big\rangle_{E}\\
\nonumber &+& \int_0^t \big\langle R_s(y) w, dP^*_s(y) w^\prime \big\rangle_{E},
\end{eqnarray}
for each $w, w^\prime \in E$, where $P^*$ is the adjoint. To keep notation simple, we set $I_1 = \int_0^t \big\langle dR_s(y) w, P^*_s(y) w^\prime \big\rangle_{E}$ and $I_2 = \int_0^t \big\langle R_s(y) w, dP^*_s(y) w^\prime \big\rangle_{E}$. We observe

\begin{eqnarray*}
I_1 &=&-\int_0^t \big\langle P_s(y)R_s(y) S(-s)\nabla F(X^y_s)S(s) w, w^\prime \big\rangle _{E} ds\\
& &\\
&-&\sum_{i=1}^\infty \sqrt{\lambda_i} \int_0^t \big\langle P_s(y)R_s(y) S(-s)\nabla G_i(X^y_s)S(s) w, w^\prime \big\rangle_{E} d\beta^i_s.
\end{eqnarray*}
In addition, Assumption B1 allows us to represent

\begin{eqnarray*}
I_2&=& \int_0^t \big\langle S(-s)\nabla F(X^y_s)S(s)P_s(y) R_s(y) w, w^\prime \big\rangle_{E} ds\\
& &\\
&+&\sum_{i=1}^\infty \sqrt{\lambda_i} \int_0^t \big\langle S(-s)\nabla F(X^y_s)S(s)P_s(y) R_s(y) w, w^\prime \big\rangle_{E} d\beta^i_s.
\end{eqnarray*}

This shows that
\begin{eqnarray}
\nonumber P_t(y)R_t(y) &=& \text{Id} + \int_0^t S(-s)\nabla F(X^y_s)S(s)  (P_s(y)R_s(y) ds \\ \nonumber& &\\
\nonumber &+& \sum_{i=1}^\infty \sqrt{\lambda_i}\int_0^t S(-s)\nabla G_i(X^y_s)S(s) P_s(y)R_s(y) d\beta^i_s\\
\label{equestprod} & &\\
\nonumber &-& \int_0^t P_s(y)R_s(y)S(-s)\nabla F(X^y_s)S(s)ds \\ \nonumber& &\\ \nonumber &-& \sum_{i=1}^\infty \sqrt{\lambda_i}\int_0^t P_s(y)R_s(y)S(-s)\nabla G_i(X^y_s)S(s)d\beta^i_s.
\end{eqnarray}
We now observe there exists a unique solution of (\ref{equestprod}). To see this, let $Q_t(y) = P_t(y)R_t(y) - \text{Id}$ and from (\ref{equestprod}), we have
\begin{eqnarray}
\nonumber Q_t(y) &=& \int_0^t S(-s)\nabla F(X^y_s)S(s)  Q_s(y) ds + \sum_{i=1}^\infty \sqrt{\lambda_i}\int_0^t S(-s)\nabla G_i(X^y_s)S(s) Q_s(y) d\beta^i_s\\
\label{equestprod1}& &\\
\nonumber &-& \int_0^t Q_s(y)S(-s)\nabla F(X^y_s)S(s)ds - \sum_{i=1}^\infty \sqrt{\lambda_i}\int_0^t Q_s(y)S(-s)\nabla G_i(X^y_s)S(s)d\beta^i_s~.
\end{eqnarray}
The same argument of the proof of Lemma \ref{fixpointjac} yields the existence of a unique solution of equation (\ref{equestprod1}). This obviously implies that (\ref{equestprod}) admits only one solution. Since $\text{Id}$ solves (\ref{equestprod}), we do have $P_t(y)R_t(y) = \text{Id}$ for every $t \in [0,T]$ and we conclude
$\mathbf{J}_{0\rightarrow t}(y) \mathbf{J}^+_{0\rightarrow t}(y) = S(t)P_t(y)R_t(y) S^-(t) = \text{Id}~a.s.$
\end{proof}

\section{Existence of densities under H\"{o}rmander's bracket condition}\label{ExistenceSection}
In this section, we examine the existence of the densities for random variables of the form $\mathcal{T}(X^{x_0}_t)$ for a bounded linear operator $\mathcal{T}:E\rightarrow \mathbb{R}^d$ for a given $t\in (0,T]$. Throughout this section, we fix a set of parameters $\kappa,\kappa_0,\tilde{\gamma},\delta,\lambda$ as described in (\ref{Aindices}). In order to state a H\"{o}rmander's bracket condition, we need to work with smooth vector fields $F,G_i; i\ge 1$. Let

$$\text{dom}(A^n):=\{h\in E; h\in \text{dom}(A^{n-1})~\text{and}~A^{n-1}h\in \text{dom}(A)\},$$
$$\|h\|^2_{\text{dom}(A^n)}:=\sum_{i=0}^n \|A^ih\|^2_E,$$
$$\text{dom}(A^\infty):=\cap_{n=1}^\infty\text{dom}(A^n).$$

We observe $\text{dom}(A^\infty)$ is a Fr\'echet space equipped with the family of seminorms $\|\cdot \|_{\text{dom}(A^n)}; n\ge 0$. In the sequel, for each $t\in [0,T]$, we equip $S(t)E$ with the following inner product

\begin{equation}\label{Sinner}
\langle S(t)x, S(t)y \rangle_{S(t)E}:=\langle x, y \rangle_{E};~x,y\in E.
\end{equation}
Notice that this is a well-defined inner product due to the injectivity of the semigroup. One can easily check $S(t)E$ is a separable Hilbert space equipped with the norm associated with (\ref{Sinner}). Moreover, for each $x_0\in E_\kappa$ and $t\in [0,T]$, $\mathbf{J}^+_{0\rightarrow t}(x_0):S(t)E\rightarrow E$ admits an adjoint as a bounded linear operator from $E$ to $S(t)E$. Indeed, let $\mathbf{J}^{+,*}_{0\rightarrow t}(x_0):E\rightarrow S(t)E$ be the linear operator defined by

$$y\mapsto \mathbf{J}^{+,*}_{0\rightarrow t}(x_0)y:=S(t)R^*_t(x_0)y.$$
Then,

\begin{eqnarray}
\nonumber \big\langle \mathbf{J}^+_{0\rightarrow t}(x_0)S(t)x,y\big\rangle_{E} &=& \big\langle R_t(x_0) S(-t)S(t)x,y\big\rangle_{E}\\
\nonumber & &\\
\nonumber &=&\big\langle x, R^*_t(x_0)y  \big\rangle_{E} =  \big\langle S(t)x, \mathbf{J}^{+,*}_{0\rightarrow t}(x_0)y  \big\rangle_{S(t)E},
\end{eqnarray}
where $\|\mathbf{J}^{+,*}_{0\rightarrow t}(x_0)y\|_{S(t)E} = \|R^*_t(x_0)y\|_E \le \|R^*_t(x_0)\| \|y\|_E$. This proves our claim. We observe $R^*_t(x_0) = \text{Id} + U^*_t(x_0)$, where

\begin{eqnarray}
\nonumber U^*_t(x_0)&=&-\int_0^t \big( S(-r)\nabla F(X^{x_0}_r)S(r)\big)^* \big(\text{Id} + U^*_r(x_0)\big)dr\\
\nonumber & &\\
\nonumber&-& \sum_{i=1}^\infty\sqrt{\lambda_i}\int_0^t \big( S(-r)\nabla G_i(X^{x_0}_r)S(r)\big)^* \big(\text{Id} + U^*_r(x_0)\big)d\beta^i_r,
\end{eqnarray}
so that

\begin{eqnarray}
\nonumber R^*_t(x_0)&=& \text{Id}-\int_0^t \big( S(-r)\nabla F(X^{x_0}_r)S(r)\big)^* R^*_r(x_0)dr\\
\label{adjointR}& &\\
\nonumber&-& \sum_{i=1}^\infty\sqrt{\lambda_i}\int_0^t \big( S(-r)\nabla G_i(X^{x_0}_r)S(r)\big)^* R^*_r(x_0)d\beta^i_r.
\end{eqnarray}
In other words,

\begin{eqnarray}
\nonumber \mathbf{J}^{+,*}_{0\rightarrow t}(x_0)&=& S(t) -\int_0^t S(t)\big( S(-r)\nabla F(X^{x_0}_r)S(r)\big)^* R^*_r(x_0)dr\\
\nonumber& &\\
\nonumber&-& \sum_{i=1}^\infty\sqrt{\lambda_i}\int_0^t S(t)\big( S(-r)\nabla G_i(X^{x_0}_r)S(r)\big)^* R^*_r(x_0)d\beta^i_r.
\end{eqnarray}

\begin{definition}
A vector field $V$ on an open subset $U\subset M$ of a Fr\'echet space $M$ is a smooth map $V:U\rightarrow M$.
\end{definition}

Let us recall the concept of Lie brackets between two vector fields $V_1,V_2:\text{dom}(A^\infty)\rightarrow \text{dom}(A^\infty)$

\begin{equation}\label{liebracket}
[V_1,V_2](r):= \nabla V_2(r) V_1(r) - \nabla V_1(r) V_2(r),
\end{equation}
for each $r\in \text{dom}(A^\infty)$. We observe $[V_1,V_2]:\text{dom}(A^\infty)\rightarrow\text{dom}(A^\infty)$ is a well-defined vector field whenever $V_1,V_2$ are vector fields on $\text{dom}(A^\infty)$. Moreover, $\frac{1}{4} < \kappa < 1$ implies $\text{dom}(A)\subset \text{dom}(-A)^{\kappa} $, so that $\text{dom}(A^\infty)\subset  E_\kappa$.


\

\noindent \textbf{Assumption C1:} $G:E\rightarrow \mathcal{L}_2\big(U_0; S(T)E\big)$ satisfies:

\

\noindent (i) $x\mapsto G_i(x)$ is an $S(T)\text{dom}(A)$-valued continuous mapping for each $i\ge 1$. Moreover,

\

\noindent (ii) $$\mathbb{E}\int_0^T \|G(X^{x_0}_r)\|^2_{\mathcal{L}_2(U_0,S(T)E)}dr < \infty.$$

\

\noindent \textbf{Assumption C2:} $F,G_i:E\rightarrow \text{dom}(A^\infty)$ are smooth mappings with bounded derivatives for every $i\ge 1$ with the property that

$$\sup_{\ell\ge 1}\sup_{y\in E}\|\nabla^n G_\ell(y)\|_{(n),E\rightarrow \text{dom}(A^m)} < \infty,$$
for every $n,m\ge 1$. There exists a constant $C$ such that
$$\|G_\ell(y)\|_{\text{dom}(A)}\le C(1+\|y\|_{\text{dom}(A)}),~y\in \text{dom}(A),$$
for every $\ell \ge 1$. Moreover, $F,G_i;i\ge 1:\text{dom}(A^k)\rightarrow \text{dom}(A^k)$ are $C^\infty$-bounded for every $k,i\ge 1$.


\

\noindent \textbf{Assumption C3:} For every $n,p\ge 1$, $\nabla^{n}G_p(x)v\in S(T)\text{dom}(A)$ and $\nabla^n F(x)v\in S(T)\text{dom}(A)$ for every  $x\in \text{dom}(A)$ and $v\in \text{dom}^n(A)$.

\

Under Assumption C2, if we assume that $x_0\in \text{dom}(A^\infty$), then we can construct a solution process with $\alpha$-H\"{o}lder continuous trajectories in $\text{dom}(A^\infty)$. This is true because the Picard approximation procedure converges in every Hilbert space $\text{dom}(A^m)$, and the topology of $\text{dom}(A^\infty)$ is the projective limit of the ones on $\text{dom}(A^m)$. We summarize this fact into the following remark.

\begin{remark}\label{strongSOL}
Under Assumption C2, for each initial condition $x_0\in \text{dom}(A)$, (\ref{SPDE1}) has a unique strong solution. If $x_{0}\in \text{dom}(A^\infty)$, then we can construct a solution of (\ref{SPDE1}) taking values on $\text{dom}(A^\infty)$ and such that

$$\|\delta X^{x_0}\|_{\alpha,\text{dom}(A^m)} < \infty,$$
for every $m\ge 1$.
\end{remark}

\begin{remark}
Assumption C3 plays a rule in constructing the argument towards the existence of densities which requires

$$[G_0,V](X^{x_0}_t)\in S(t)E$$
in order to belong to the domain of $\mathbf{J}^+_{0\rightarrow t}(x_0)$ for every $V\in \mathcal{V}_m;m\ge 0$ (see (\ref{horc1})), where $G_0$ is the vector field given by (\ref{stratonovich}).
\end{remark}

The following elementary remark is useful.
\begin{lemma}\label{elem1}
If $V:E\rightarrow \text{dom}(A^\infty)$ is a smooth mapping with bounded derivatives, then

$$\sup_{y\in E}\|\nabla^n V(y)\|_{(n),0\rightarrow 0} < \infty.$$
\end{lemma}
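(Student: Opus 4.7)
The plan is essentially a one-line unpacking of the hypothesis. The Fr\'echet space $\text{dom}(A^\infty)$ carries the projective limit topology from the seminorms $\|\cdot\|_{\text{dom}(A^m)}$, $m\ge 0$, and the $m=0$ seminorm is exactly the $E$-norm because $\text{dom}(A^0)=E$ with $\|A^0 h\|_E=\|h\|_E$. So the statement ``$V:E\to\text{dom}(A^\infty)$ is smooth with bounded derivatives,'' interpreted in the natural sense for a map into a Fr\'echet space, already contains the $m=0$ estimate $\sup_{y\in E}\|\nabla^n V(y)\|_{(n),0\to\text{dom}(A^0)}<\infty$, which is precisely $\sup_{y\in E}\|\nabla^n V(y)\|_{(n),0\to 0}<\infty$.

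The only item that deserves a line of justification is the coherence between Fr\'echet-valued differentiability and $E$-valued differentiability. For this I would note that the inclusion $\iota_m:\text{dom}(A^m)\hookrightarrow E$ is a bounded linear operator for every $m\ge 0$ (with norm $\le 1$ when we read off the $m=0$ seminorm directly). Hence if $V$ is $C^\infty$ into the Fr\'echet space $\text{dom}(A^\infty)$, the composition $\iota_0\circ V:E\to E$ is $C^\infty$ with derivatives
\[
\nabla^n(\iota_0\circ V)(y)=\iota_0\circ\nabla^n V(y),
\]
so the operator norm as a multilinear map $E^n\to E$ is bounded by $\|\iota_0\|\cdot\|\nabla^n V(y)\|_{(n),0\to\text{dom}(A^0)}$, and uniformly in $y$ by hypothesis. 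This is the entire argument; no step is delicate and there is no genuine obstacle, which is why the authors flag it merely as an ``elementary remark'' used downstream (e.g., to apply Assumption C2 via the $E$-operator-norm bounds required by Assumptions A1--A2 and B1--B2).
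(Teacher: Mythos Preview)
Your argument is correct and essentially identical to the paper's: both use that the continuous inclusion $\text{dom}(A^m)\hookrightarrow E$ converts the assumed bound on $\|\nabla^n V(y)\|_{(n),E\to\text{dom}(A^m)}$ into a bound on $\|\nabla^n V(y)\|_{(n),0\to 0}$. The only cosmetic difference is that you take $m=0$ (where the inclusion is the identity), while the paper writes out the inequality $\|\cdot\|_E\le\|\cdot\|_{\text{dom}(A)}$ at level $m=1$.
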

\begin{proof}
The $n$-th Fr\'echet derivative of $V$ viewed as a map from $E$ to $\text{dom}(A)$ is given by $\nabla^n V:E \rightarrow \mathcal{L}_{n}\big(E^n; \text{dom}(A)\big)$, where

$$\|\nabla^n V(x)(h_1, \ldots, h_n)\|_{\text{dom}(A)}\le  \|\nabla ^ n V (x)\|_{(n),E\rightarrow \text{dom}(A)}\|h_1\|_E \times \ldots\times \|h_n\|_E.$$
Then,

\begin{eqnarray*}
\|\nabla^n V(x)(h_1, \ldots, h_n)\|_E&\le& \|\nabla^n V(x)(h_1, \ldots, h_n)\|_{\text{dom}(A)}\le \|\nabla ^ n V (x)\|_{(n),E\rightarrow \text{dom}(A)}\|h_1\|_E\times \ldots\times \|h_n\|_E\\
& &\\
&\le& \sup_{y\in E}\|\nabla ^ n V (y)\|_{(n),E\rightarrow \text{dom}(A)}\|h_1\|_E \times \ldots\times \|h_n\|_E,
\end{eqnarray*}
and hence $\|\nabla ^ n V (x)\|_{(n),0\rightarrow 0}\le \sup_{y\in E}\|\nabla ^ n V (y)\|_{(n),E\rightarrow \text{dom}(A)} < \infty$ for every $x\in E$.
\end{proof}

Let us now investigate the existence of densities for the SPDE (\ref{SPDE1}). We start with some preliminary results.

\begin{lemma}\label{LemmaMallderX}
Under Assumptions H1-A1-A2-A3-B1-B2-C1-C2, for each $x_0\in \text{dom}(A)$, we have

\begin{equation}\label{MallderX}
\mathbf{D}_rX^{x_0}_t = \mathbf{J}_{0\rightarrow t}(x_0)\mathbf{J}_{0\rightarrow r}^{+}(x_0)G(X^{x_0}_r)~a.s,
\end{equation}
for every $r < t$. Therefore,

\begin{equation}\label{chain}
\mathbf{D}_r \mathcal{T}(X^{x_0}_t)  = \mathcal{T}\big(\mathbf{J}_{0\rightarrow t}(x_0)\mathbf{J}_{0\rightarrow r}^{+}(x_0)G(X^{x_0}_r)\big)~a.s,
\end{equation}
for every $r< t$.
\end{lemma}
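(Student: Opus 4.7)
The strategy is to identify both sides of \eqref{MallderX} as solutions of the same linear SPDE on $[r,T]$ with the same initial condition at time $r$, and then to invoke uniqueness (already at our disposal from the fixed point arguments in Proposition \ref{exisuniq} and Lemma \ref{fixpointjac}). The expression on the right-hand side of \eqref{MallderX} is only meaningful provided $G(X^{x_0}_r)$ lies in the domain $S(r)E$ of $\mathbf{J}^{+}_{0\to r}(x_0)$; this is a direct consequence of Assumption C1 combined with Remark \ref{S(T)}, since $G(X^{x_0}_r)\in S(T)E\subset S(r)E$ a.s.

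The first concrete step is to establish a flow identity for the Jacobian. Starting from the defining equation \eqref{jacobianSPDE} for $\mathbf{J}_{0\to t}(x_0)v$ and splitting $\int_0^t=\int_0^r+\int_r^t$, one uses the semigroup identity $S(t-s)=S(t-r)S(r-s)$ for $s\le r$ to factor $S(t-r)$ out of the terms integrated over $[0,r]$; what remains inside the brackets is exactly $\mathbf{J}_{0\to r}(x_0)v$. Thus, for every $v\in E$ and $r\le t$,
\begin{equation*}
\mathbf{J}_{0\to t}(x_0)v = S(t-r)\mathbf{J}_{0\to r}(x_0)v + \int_r^t S(t-s)\nabla F(X^{x_0}_s)\mathbf{J}_{0\to s}(x_0)v\, ds + \sum_{i=1}^\infty \sqrt{\lambda_i}\int_r^t S(t-s)\nabla G_i(X^{x_0}_s)\mathbf{J}_{0\to s}(x_0)v\, d\beta^i_s.
\end{equation*}
Specializing to $v = \mathbf{J}^{+}_{0\to r}(x_0)G(X^{x_0}_r)$ and using the right-inverse property $\mathbf{J}_{0\to r}(x_0)\mathbf{J}^{+}_{0\to r}(x_0) = \text{Id}$ on $S(r)E$, the process $Y_t := \mathbf{J}_{0\to t}(x_0)\mathbf{J}^{+}_{0\to r}(x_0)G(X^{x_0}_r)$ satisfies
\begin{equation*}
Y_t = S(t-r)G(X^{x_0}_r) + \int_r^t S(t-s)\nabla F(X^{x_0}_s) Y_s\, ds + \sum_{i=1}^\infty \sqrt{\lambda_i}\int_r^t S(t-s)\nabla G_i(X^{x_0}_s) Y_s\, d\beta^i_s,
\end{equation*}
for $t\ge r$.

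On the other hand, the Malliavin derivative formula \eqref{Malliavindereq} asserts that $s\mapsto \mathbf{D}_r X^{x_0}_s$ solves the very same linear equation on $[r,T]$ with the very same initial value $S(t-r)G(X^{x_0}_r)$ (taking $s=r$ in \eqref{Malliavindereq}). Because this linear equation has coefficients controlled by the bounded operators $\nabla F(X^{x_0}_\cdot)$ and $\nabla G_i(X^{x_0}_\cdot)$ under Assumptions A1--A2, an argument analogous to Corollary \ref{linearequation1} (or a direct Picard iteration in $\hat{\mathcal{C}}_1^{\kappa,\kappa}$) yields uniqueness, and therefore $Y_t=\mathbf{D}_r X^{x_0}_t$ a.s., which is \eqref{MallderX}. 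Finally, the chain rule identity \eqref{chain} is immediate from the linearity and boundedness of $\mathcal{T}:E\to\mathbb{R}^d$, together with the standard fact (from the closability of $\mathbf{D}$) that $\mathbf{D}_r(\mathcal{T}Z)=\mathcal{T}(\mathbf{D}_r Z)$ for $Z\in\mathbb{D}^{1,2}_{\mathrm{loc}}(E)$.

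The one delicate point is the derivation of the flow identity for $\mathbf{J}_{0\to t}(x_0)$, because one must legitimately push $S(t-r)$ past the stochastic Young integrals over $[0,r]$; this is where the analyticity of $S$ and the integrability afforded by Assumption A3 are used, and it is the place where care has to be taken to ensure that the integrands remain in the class needed to form Young's integrals in $\mathcal{C}^{\mu,0\to 0}_1$. Once this pathwise flow decomposition is in place, the rest of the proof is a direct comparison of two linear equations.
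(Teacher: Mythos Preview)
Your argument is correct and follows the same overall strategy as the paper: show that both $\mathbf{D}_r X^{x_0}_t$ and $\mathbf{J}_{0\to t}(x_0)\mathbf{J}^{+}_{0\to r}(x_0)G(X^{x_0}_r)$ solve the same linear equation on $[r,T]$ with the same data at time $r$, then invoke uniqueness. The difference lies in which form of the equation you compare. You stay in the \emph{mild} formulation and therefore need the flow identity obtained by splitting $\int_0^t=\int_0^r+\int_r^t$ and factoring $S(t-r)$ through the $[0,r]$-integrals (which, as you note, is the only point requiring care). The paper instead exploits Assumption~C2 and $x_0\in\text{dom}(A)$ to pass to \emph{strong} solutions (Remark~\ref{strongSOL}): both the Malliavin derivative equation~\eqref{Malliavindereq} and the Jacobian equation~\eqref{jacobianSPDE} are rewritten without semigroup convolution, and one simply uses the increment $\mathbf{J}_{0\to t}(x_0)-\mathbf{J}_{0\to r}(x_0)=\int_r^t\nabla F(X^{x_0}_\ell)\mathbf{J}_{0\to\ell}(x_0)\,d\ell+\sum_i\sqrt{\lambda_i}\int_r^t\nabla G_i(X^{x_0}_\ell)\mathbf{J}_{0\to\ell}(x_0)\,d\beta^i_\ell$ directly, bypassing the flow decomposition altogether. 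Your mild-form route is slightly more robust (it does not lean on the strong-solution structure), while the paper's route is a bit shorter once the strong formulation is available. A minor wording slip: the shared initial value at $s=r$ is $G(X^{x_0}_r)$, not $S(t-r)G(X^{x_0}_r)$; the latter is just the first term of the mild representation at time $t$.
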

\begin{proof}
On one hand, Remark \ref{strongSOL} and (\ref{Malliavindereq}) yields

\begin{equation}\label{Malliavindereq1}
\mathbf{D}_rX^{x_0}_t = G(X^{x_0}_r) + \int_r^t \nabla F(X^{x_0}_\ell)\mathbf{D}_rX^{x_0}_\ell d\ell+ \sum_{i=1}^\infty \int_r^t \nabla G_i(X^{x_0}_\ell)\mathbf{D}_rX^{x_0}_\ell d\beta^i_\ell,
\end{equation}
for $0\le r< t$. On the other hand, Assumption C2 implies that (\ref{jacobianSPDE}) has a strong solution for $y=x_0\in \text{dom}(A)$ and for each $v=G_j(X^{x_0}_r)$. Having said that, let us fix $0\le r < t$ and a positive integer $j\ge 1$. The fact that $G_j(E) \subset S(T)E$ and Remark \ref{S(T)} yield

$$ G_j(X^{x_0}_r) + \int_r^t \nabla F(X^{x_0}_\ell)\mathbf{J}_{0\rightarrow \ell}(x_0)\mathbf{J}_{0\rightarrow r}^{+}(x_0)G_j(X^{x_0}_r) d\ell+ \sum_{i=1}^\infty \int_r^t \nabla G_i(X^{x_0}_\ell)\mathbf{J}_{0\rightarrow \ell}(x_0)\mathbf{J}_{0\rightarrow r}^{+}(x_0)G_j(X^{x_0}_r)d\beta^i_\ell$$
$$ = G_j(X^{x_0}_r) + \Bigg(\int_r^t \nabla F(X^{x_0}_\ell) \mathbf{J}_{0\rightarrow \ell}(x_0) d\ell+ \sum_{i=1}^\infty \int_r^t \nabla G_i(X^{x_0}_\ell)\mathbf{J}_{0\rightarrow \ell}(x_0)d\beta^i_\ell\Bigg)\mathbf{J}_{0\rightarrow r}^{+}(x_0)G_j(X^{x_0}_r)$$
$$ = G_j(X^{x_0}_r) + \Big(\mathbf{J}_{0\rightarrow t}(x_0) - \mathbf{J}_{0\rightarrow r}(x_0)\Big)\mathbf{J}_{0\rightarrow r}^{+}(x_0)G_j(X^{x_0}_r) = \mathbf{J}_{0\rightarrow t}(x_0)\mathbf{J}^{+}_{0\rightarrow r}(x_0)G_j(X^{x_0}_r)~a.s.$$
By invoking (\ref{pathwiseMalliavinder}), (\ref{continuousPSI}), Lemma \ref{menosK}, (\ref{Malliavindereq1}) and Assumption C1(i), we know that both $(r,t)\mapsto \mathbf{D}_r X^{x_0}_t$ and $(r,t)\mapsto \mathbf{J}_{0\rightarrow t}(x_0)\mathbf{J}^{+}_{0\rightarrow r}(x_0)G_j(X^{x_0}_r) $ are jointly continuous a.s on the simplex $\{(r,t); 0\le r\le t \le T\}$. This fact combined with the uniqueness of the SPDE solution of (\ref{Malliavindereq1}) (for each fixed $r$) implies that they are indistinguishable

$$\big(\mathbf{D}_\cdot X^{x_0}_\cdot\big)(\sqrt{\lambda_j}e_j) =\mathbf{J}_{0\rightarrow \cdot}(x_0)\mathbf{J}^{+}_{0\rightarrow \cdot}(x_0)G_j(X^{x_0}_\cdot)~a.s,$$
for each $j\ge 1$. Assumption C1 (ii) implies

$$r\mapsto \mathbf{J}_{0\rightarrow t}(x_0) \mathbf{J}^+_{0\rightarrow r}(x_0)G(X^{x_0}_r)\in \mathcal{L}_2\big(U_0;L^{\frac{1}{H}}([0,T];E)\big)\subset \mathcal{L}_2(U_0;\mathcal{H}\otimes E)~a.s,$$
for every $t\in (0,T]$. Summing up the above arguments, we shall conclude (\ref{MallderX}) holds true. The chain rule yields representation (\ref{chain}).
\end{proof}


In what follows, let us denote

\begin{equation}\label{gammamatrix}
\gamma_t:= \Big(\big\langle \mathbf{D}\mathcal{T}_i(X_t^{x_0}),  \mathbf{D}\mathcal{T}_j(X_t^{x_0})\big\rangle_{\mathcal{L}_2(U_0;\mathcal{H})}\Big)_{1\le i,j\le d},
\end{equation}
where $\mathcal{T} = (\mathcal{T}_1, \ldots \mathcal{T}_d):E \rightarrow \mathbb{R}^d$. In order to investigate non-degeneracy of the Malliavin derivative, it is convenient to work with a \textit{reduced} Malliavin operator. Let us define the self-adjoint linear operator $\mathcal{C}_t:E\rightarrow E$ by the following quadratic form

\begin{eqnarray}
\nonumber\langle \mathcal{C}_t y, y \rangle_{E}&:=&\alpha_H \sum_{\ell=1}^\infty \int_0^t \int_0^t\big\langle \mathbf{J}^+_{0\rightarrow u}(x_0) G_\ell(X^{x_0}_u),y\big\rangle_{E}\big\langle \mathbf{J}^+_{0\rightarrow v}(x_0) G_\ell(X^{x_0}_v),y\big\rangle_{E}|u-v|^{2H-2}dudv\\
\label{Ctoperator}& &\\
\nonumber&=&\sum_{\ell=1}^\infty \Big\| \big\langle \mathbf{J}^+_{0\rightarrow \cdot}(x_0) G_\ell(X^{x_0}_\cdot),y\big\rangle_{E}  \Big\|^2_{\mathcal{H}} = \sum_{\ell=1}^\infty \Big\| \big\langle G_\ell(X^{x_0}_\cdot),\mathbf{J}^{+,*}_{0\rightarrow \cdot}(x_0)y\big\rangle_{S(\cdot)E}  \Big\|^2_{\mathcal{H}},
\end{eqnarray}
for $y\in E$ and $0 < t\le T$. In (\ref{Ctoperator}), the norm in $\mathcal{H}$ is computed over $[0,t]$. We observe $\mathcal{C}_t$ is a well-defined bounded linear operator due to Assumption C1 (ii) and $\frac{1}{H}< 2$.

By applying Lemma \ref{LemmaMallderX} and (\ref{niceinner}), we arrive at the following representation.

\begin{lemma}\label{Malliavinmatrix}
Under Assumptions H1-A2-A2-A3-B1-B2-C1-C2, for each $x_0\in \text{dom}(A)$, we have

$$\gamma_t = \big(\mathcal{T}\circ\mathbf{J}_{0\rightarrow t}(x_0)\big)~ \mathcal{C}_t~\big(\mathcal{T}\circ\mathbf{J}_{0\rightarrow t}(x_0)\big)^*. $$
\end{lemma}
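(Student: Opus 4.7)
The plan is a direct computation: I would substitute the representation of the Malliavin derivative from Lemma \ref{LemmaMallderX} into the definition of $\gamma_t$, then unwind the inner product on $\mathcal{L}_2(U_0;\mathcal{H})$ using the integral kernel representation (\ref{niceinner}) of the $\mathcal{H}$-inner product available on $L^{1/H}$-functions, and finally recognize the result as the quadratic form defining $\mathcal{C}_t$ in (\ref{Ctoperator}), sandwiched between $\mathcal{T}\circ \mathbf{J}_{0\to t}(x_0)$ and its adjoint.

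More concretely, fixing $i,j\in\{1,\dots,d\}$, by linearity of $\mathcal{T}_i$ and by (\ref{chain}) I would first write, for each $\ell\ge 1$ and $r\in[0,t]$,
$$\mathbf{D}_r\mathcal{T}_i(X^{x_0}_t)(\sqrt{\lambda_\ell}e_\ell)=\sqrt{\lambda_\ell}\,\mathcal{T}_i\bigl(\mathbf{J}_{0\to t}(x_0)\mathbf{J}^+_{0\to r}(x_0)G_\ell(X^{x_0}_r)\bigr),$$
which is meaningful because Assumption C1 combined with Remark \ref{S(T)} ensures $G_\ell(X^{x_0}_r)\in S(T)E\subset S(r)E$, i.e. it lies in the domain of $\mathbf{J}^+_{0\to r}(x_0)$. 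By Assumption C1(ii) together with the continuity of $r\mapsto \mathbf{J}^+_{0\to r}(x_0)G_\ell(X^{x_0}_r)$, the above functions belong to $L^{1/H}([0,t];\mathbb{R})$ a.s., so by (\ref{inclusions}) they sit inside $|\mathcal{H}|\subset\mathcal{H}$ and the inner product is computed by the kernel formula (\ref{niceinner}). Expanding the $\mathcal{L}_2(U_0;\mathcal{H})$-inner product in the orthonormal basis $(\sqrt{\lambda_\ell}e_\ell)_{\ell\ge 1}$ of $U_0$ yields
$$(\gamma_t)_{ij}=\alpha_H\sum_{\ell=1}^\infty\lambda_\ell\int_0^t\!\!\int_0^t\mathcal{T}_i\bigl(\mathbf{J}_{0\to t}\mathbf{J}^+_{0\to u}G_\ell(X^{x_0}_u)\bigr)\,\mathcal{T}_j\bigl(\mathbf{J}_{0\to t}\mathbf{J}^+_{0\to v}G_\ell(X^{x_0}_v)\bigr)|u-v|^{2H-2}\,du\,dv.$$

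Next, setting $A:=\mathcal{T}\circ \mathbf{J}_{0\to t}(x_0)\in\mathcal{L}(E,\mathbb{R}^d)$, I would rewrite each scalar $\mathcal{T}_k(\mathbf{J}_{0\to t}(x_0)z)=\langle z,A^*e_k\rangle_E$ (with $e_k$ the canonical basis of $\mathbb{R}^d$), absorb the factor $\lambda_\ell$ as $\sqrt{\lambda_\ell}\cdot\sqrt{\lambda_\ell}$ into the two $G_\ell$ factors, and compare the resulting expression with the polarized version of the quadratic form in (\ref{Ctoperator}). Since the kernel $|u-v|^{2H-2}$ is symmetric and $\mathcal{C}_t$ is self-adjoint, the polarization identity gives exactly $\langle A^*e_i,\mathcal{C}_t A^*e_j\rangle_E=(A\mathcal{C}_tA^*)_{ij}$, which matches the stated right-hand side entry by entry.

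I do not expect any substantial obstacle: the only technical check is the legitimacy of interchanging the sum over $\ell$ with the double integral in $(u,v)$ and with the operator $\mathcal{T}$, which is handled by absolute convergence guaranteed by the trace-class hypothesis $\operatorname{trace}Q<\infty$, Assumption C1(ii) and the uniform boundedness of $\mathbf{J}_{0\to t}(x_0),\mathbf{J}^+_{0\to u}(x_0)$ on the relevant spaces. Everything else is bookkeeping of domains (already secured above) and the polarization argument, both essentially automatic once Lemma \ref{LemmaMallderX} is available.
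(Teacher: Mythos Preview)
Your proposal is correct and is exactly the paper's approach: the paper's entire proof is the single line ``By applying Lemma \ref{LemmaMallderX} and (\ref{niceinner}), we arrive at the following representation.'' The extra factor $\lambda_\ell$ your computation produces compared with the definition (\ref{Ctoperator}) is a cosmetic inconsistency in the paper (all $\lambda_\ell>0$, so nothing downstream is affected); your instinct to absorb $\sqrt{\lambda_\ell}$ into each $G_\ell$, i.e.\ to read $G_\ell$ there as $G(\cdot)(\sqrt{\lambda_\ell}e_\ell)$, is the correct reconciliation.
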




Let us define

\begin{equation}\label{stratonovich}
G_0(x):= A(x) + F(x); x\in \text{dom}(A^\infty).
\end{equation}
Given the SPDE (\ref{SPDE1}), let $\mathcal{V}_k$ be a collection of vector fields given by

\begin{equation}\label{horc1}
\mathcal{V}_0 = \{G_i; i\ge 1\},\quad \mathcal{V}_{k+1}:=\mathcal{V}_k \cup \big\{ [G_j, U]; U\in \mathcal{V}_k~\text{and}~j\ge 0 \big\}.
\end{equation}
We also define the vector spaces $\mathcal{V}_k(x_0):=\text{span}\{V(x_0); V\in \mathcal{V}_k\}$ and we set

$$\mathcal{D}(x_0):=\cup_{k\ge 1}\mathcal{V}_k(x_0),$$
for each $x_0\in \text{dom}(A^\infty)$.

\

Note that under Assumption C2, all the Lie brackets in (\ref{horc1}) are well-defined as vector fields $\text{dom}(A^\infty)\rightarrow \text{dom}(A^\infty)$.

\begin{proposition}\label{Itobrackets1}
If Assumptions H1-A1-A2-A3-B1-B2-C1-C2-C3 hold true, then for each $x_0\in \text{dom}(A^\infty)$, we have

\begin{eqnarray}
\nonumber \mathbf{J}^+_{0\rightarrow t}(x_0)V(X^{x_0}_t)&=& V(x_0) + \int_0^t \mathbf{J}_{0\rightarrow s}^+(x_0)[G_0,V](X^{x_0}_s)ds\\
\label{iter} & &\\
\nonumber &+& \sum_{\ell=1}^\infty \sqrt{\lambda_\ell}\int_0^t \mathbf{J}^+_{0\rightarrow s}(x_0)[G_\ell,V](X^{x_0}_s)d\beta^\ell_s;~0\le t\le T,
\end{eqnarray}
where $V\in \mathcal{V}_n$ for $n=0,1,2, \ldots~.$
\end{proposition}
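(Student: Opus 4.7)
The plan is to apply a Young product rule (valid since $H>1/2$) to the factorization
$$\mathbf{J}^+_{0\rightarrow t}(x_0)V(X^{x_0}_t) = R_t(x_0)\,\hat W_t, \qquad \hat W_t := S(-t)V(X^{x_0}_t),$$
and then let the Lie bracket identities absorb all cross-terms. Since $x_0\in \text{dom}(A^\infty)$ and Assumption C2 holds, Remark \ref{strongSOL} guarantees that $X^{x_0}$ is a strong solution taking values in $\text{dom}(A^\infty)$, so the strong form
$$dX^{x_0}_t = G_0(X^{x_0}_t)\,dt + \sum_{\ell\ge 1}\sqrt{\lambda_\ell}\,G_\ell(X^{x_0}_t)\,d\beta^\ell_t$$
is available. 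An induction on $n$ using Assumptions C2 and C3 shows that each $V\in \mathcal{V}_n$ is smooth from $\text{dom}(A^\infty)$ to itself and satisfies $\nabla^k V(x)(h_1,\ldots,h_k)\in S(T)\text{dom}(A)$ whenever $x,h_1,\ldots,h_k\in \text{dom}(A)$; combined with Remark \ref{S(T)}, this makes $\hat W_t$ and all the quantities $S(-t)\nabla V(X^{x_0}_t)G_\ell(X^{x_0}_t)$ well-defined.

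First I would apply the Young chain rule to $V\circ X^{x_0}$, obtaining
$$dV(X^{x_0}_t) = \nabla V(X^{x_0}_t)G_0(X^{x_0}_t)\,dt + \sum_{\ell\ge 1}\sqrt{\lambda_\ell}\,\nabla V(X^{x_0}_t)G_\ell(X^{x_0}_t)\,d\beta^\ell_t,$$
with no It\^o correction because $H>1/2$. Combining this with the identity $d(S(-t)Y_t) = S(-t)(dY_t - AY_t\,dt)$ (rigorously justified by differentiating $S(t)\hat W_t = V(X^{x_0}_t)$) produces
$$d\hat W_t = S(-t)\bigl[\nabla V(X^{x_0}_t)G_0(X^{x_0}_t) - AV(X^{x_0}_t)\bigr]\,dt + \sum_{\ell\ge 1}\sqrt{\lambda_\ell}\,S(-t)\nabla V(X^{x_0}_t)G_\ell(X^{x_0}_t)\,d\beta^\ell_t.$$
Using $G_0=A+F$, the Lie bracket identities
$$\nabla V(x)G_0(x) - AV(x) = [G_0,V](x) + \nabla F(x)V(x), \qquad \nabla V(x)G_\ell(x) = [G_\ell,V](x) + \nabla G_\ell(x)V(x),$$
split $d\hat W_t$ into a bracket part and a part driven by $\nabla F(X^{x_0}_t)V(X^{x_0}_t)$ and $\nabla G_\ell(X^{x_0}_t)V(X^{x_0}_t)$. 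The SDE (\ref{equest1}) for $R_t(x_0)$ together with the Young product rule then yields $d(R_t\hat W_t) = (dR_t)\hat W_t + R_t\,d\hat W_t$; since $S(t)\hat W_t = V(X^{x_0}_t)$, the $(dR_t)\hat W_t$ contribution produces exactly the negatives of the $\nabla F\cdot V$ and $\nabla G_\ell\cdot V$ terms appearing in $R_t\,d\hat W_t$, so they cancel. What remains, upon recognising $R_tS(-t) = \mathbf{J}^+_{0\rightarrow t}(x_0)$ and integrating from $0$ to $t$ with initial value $R_0\hat W_0 = V(x_0)$, is precisely (\ref{iter}).

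The main obstacle is justifying rigorously both the Young product rule and the identity $d(S(-t)Y_t) = S(-t)(dY_t - AY_t\,dt)$ in this infinite-dimensional, unbounded-semigroup setting, together with the summability in $\ell$ needed to interchange the infinite sum with Young integration. The product rule is available because $R_\cdot(x_0)\in \mathcal{C}^{\mu,0\rightarrow 0}_1$ with $\mu+\tilde\gamma>1$ (Lemma \ref{fixpointjac}), while $V(X^{x_0})$ is $\alpha$-H\"older in the graph norm of every power of $A$ (Remark \ref{strongSOL}); the unbounded-semigroup identity is controlled because $V(X^{x_0}_t)\in S(T)\text{dom}(A)$ renders $AV(X^{x_0}_t)$ a smoothly varying element of $S(T)E$. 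The summability in $\ell$ follows from the uniform-in-$\ell$ bounds in Assumptions C2 and B1 combined with $\sum_i\sqrt{\lambda_i}<\infty$, which allow the infinite sums to be handled term-by-term as convergent Young integrals.
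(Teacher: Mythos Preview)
Your proposal is correct and follows essentially the same route as the paper's proof. Both arguments derive an integral representation for $\hat W_t = S(-t)V(X^{x_0}_t)$ (the paper does this by first passing through the mild-solution form of $V(X^{x_0})$ rather than via the differential identity $d(S(-t)Y_t)=S(-t)(dY_t-AY_t\,dt)$), and then combine it with the dynamics of $R_t(x_0)$ so that the $\nabla F\cdot V$ and $\nabla G_\ell\cdot V$ terms cancel; the paper carries out this last step in weak form, pairing against a test vector $y\in E$ and using the adjoint equation (\ref{adjointR}) for $R^*_t$, which is just your product rule $d(R_t\hat W_t)=(dR_t)\hat W_t + R_t\,d\hat W_t$ written after integration against $y$. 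The only point where the paper is noticeably more explicit is the induction step: rather than invoking the regularity of iterated brackets abstractly, it writes out $\nabla[G_0,W]$ and $\nabla[G_p,W]$ term by term and checks the uniform-in-$\ell$ H\"older bound $\sup_{\ell}\|\delta\nabla V(X^{x_0}_\cdot)G_\ell(X^{x_0}_\cdot)\|_{\alpha,0}<\infty$ directly from Assumption C2 and Remark \ref{strongSOL}, which is what actually justifies the Young integral at each level of $\mathcal{V}_n$.
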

\begin{proof}
At first, we take $V\in \mathcal{V}_0$. Assumptions C2-C3 yield $V(X^{x_0}_\cdot)\in S(T)E_\kappa$, $[G_0,V](X^{x_0}_\cdot)\in S(T)E$, and $[G_\ell,V](X^{x_0}_\cdot)\in S(T)E$ a.s. Moreover, change of variables for Young integrals yields
\begin{equation}\label{changeV1}
V(X^{x_0}_t) = V(x_0)+\int_0^t \nabla V(X^{x_0}_s)G_0(X^{x_0}_s)ds + \sum_{\ell=1}^\infty \sqrt{\lambda_\ell}\int_0^t \nabla V(X^{x_0}_s) G_\ell(X^{x_0}_s)d\beta^\ell_s,
\end{equation}
where $G_0(X^{x_0}_s) = A(X^{x_0}_s) + F(X^{x_0}_s); 0\le s\le T$. We observe Young-Loeve's inequality and A1-A2-A3 allow us to state the Young integral in (\ref{changeV1}) is well-defined. Recall the Lie bracket $[G_0,V](X^{x_0}_s) = \nabla V(X^{x_0}_s) G_0(X^{x_0}_s) - \nabla G_0(X^{x_0}_s) V(X^{x_0}_s)$, so that we can actually rewrite

$$V(X^{x_0}_t) = V(x_0)+\int_0^t \Big(\nabla G_0(X^{x_0}_s)V(X^{x_0}_s) + [G_0,V](X^{x_0}_s)\Big)ds + \sum_{\ell=1}^\infty \sqrt{\lambda_\ell}\int_0^t \nabla V(X^{x_0}_s) G_\ell(X^{x_0}_s)d\beta^\ell_s,$$
where $\nabla G_0(X^{x_0}_s)V(X^{x_0}_s) = A(V(X^{x_0}_s)) + \nabla F(X^{x_0}_s)V(X^{x_0}_s); 0\le s\le T$. This implies that $V(X^{x_0})$ can be written as the mild solution of

\begin{eqnarray}
\nonumber V(X^{x_0}_t)&=& S(t)V(x_0) + \int_0^t S(t-s)\big(\nabla F(X^{x_0}_s)V(X^{x_0}_s) + [G_0,V](X^{x_0}_s)\big)ds\\
\nonumber & &\\
\nonumber &+&\sum_{\ell=1}^\infty \sqrt{\lambda_\ell}\int_0^t S(t-s)\nabla V(X^{x_0}_s)G_\ell (X^{x_0}_s)d\beta^\ell_s,
\end{eqnarray}
so that

\begin{eqnarray}
\nonumber S(-t)V(X^{x_0}_t)&=& V(x_0) + \int_0^t S(-s)\big(\nabla F(X^{x_0}_s)V(X^{x_0}_s) + [G_0,V](X^{x_0}_s)\big)ds\\
\label{hiddenV}& &\\
\nonumber &+&\sum_{\ell=1}^\infty \sqrt{\lambda_\ell}\int_0^t S(-s)\nabla V(X^{x_0}_s)G_\ell (X^{x_0}_s)d\beta^\ell_s;0\le t\le T.
\end{eqnarray}

The adjoint operator $\mathbf{J}^{+,*}_{0\rightarrow t}(x_0)$ yields
$$\langle \mathbf{J}^+_{0\rightarrow t}(x_0) V(X^{x_0}_t),y\rangle_{E} = \langle V(X^{x_0}_t),\mathbf{J}^{+,*}_{0\rightarrow t}(x_0)y\rangle_{S(t)E} = \langle S(-t)V(X^{x_0}_t),R^*_t(x_0)y\rangle_{E} $$
for a given $y\in E$. Hence, integration by parts yields

\begin{eqnarray}
\nonumber \langle \mathbf{J}^+_{0\rightarrow t}(x_0) V(X^{x_0}_t),y\rangle_{E}  &=& \langle  V(x_0),y\rangle_{E} +\int_0^t \langle dS(-s)V(X^{x_0}_s),R^*_s(x_0)y \rangle_{E}\\
\nonumber & &\\
\nonumber &+& \int_0^t \langle S(-s)V(X^{x_0}_s),dR^*_s(x_0)y \rangle_{E};0\le t\le T.
\end{eqnarray}
By combining (\ref{hiddenV}) and (\ref{adjointR}), we conclude that (\ref{iter}) holds true for $V\in \mathcal{V}_0$. Now, we take $V =[ G_i,G_p ]$ or $V  = [G_0,G_p]$ for $p,i=1, 2, \ldots~.$ In this case, C2-C3 yield $V(X^{x_0}_\cdot)\in S(T)E$, $[G_0,V](X^{x_0}_\cdot)\in S(T)E$, and $[G_\ell,V](X^{x_0}_\cdot)\in S(T)E$. From the above argument for vector fields in $\mathcal{V}_0$, we learn that in order to prove (\ref{iter}), it is sufficient to ensure that the Young integral in the right-hand side of (\ref{changeV1}) is well-defined, i.e.,

\begin{equation}\label{deltaV}
\sup_{\ell\ge 1}\|\delta \nabla V(X^{x_0}_\cdot)G_\ell(X^{x_0}_\cdot)\|_{\alpha,0}<\infty~a.s.
\end{equation}
At first, we observe if $W:\text{dom}(A^\infty)\rightarrow \text{dom}(A^\infty)$ is smooth, then

\begin{eqnarray}
\nonumber\nabla [G_0, W](x)(h) &=& \nabla^2 W(x)(h,Ax) + \nabla W(x)A(h) + \nabla^2 W(x)(h,F(x)) + \nabla W(x) \nabla F(x)h\\
\label{lieG0}& &\\
\nonumber&-&A\nabla W(x)h - \nabla^2 F(x)(h,W(x)) - \nabla F(x) \nabla W(x) h; h\in \text{dom}(A^\infty),
\end{eqnarray}

\begin{equation}\label{lieGp}
\nabla [G_p, W](x)(h) = \nabla^2 W(x)(h,G_p(x)) + \nabla W(x)\nabla G_p(x)(h) - \nabla^2 G_p(x)(h,W(x)) - \nabla G_p(x) \nabla W(x)(h),
\end{equation}
for $h\in \text{dom}(A^\infty)$ and $p\ge 1$. If $V = [G_0,G_p]$, we observe

$$\nabla V(X^{x_0}_t)G_\ell(X^{x_0}_t) = -A \nabla G_p(X^{x_0}_t)G_\ell(X^{x_0}_t)- \nabla^2 F(X^{x_0}_t)(G_p(X^{x_0}_t),G_\ell(X^{x_0}_t))$$
 $$- \nabla F(X^{x_0}_t)\nabla G_p(X^{x_0}_t)G_\ell(X^{x_0}_t) + \nabla^2G_p(X^{x_0}_t)(AX^{x_0}_t, G_\ell(X^{x_0}_t)) + \nabla G_p(X^{x_0}_t)AG_\ell(X^{x_0}_t)$$
$$ + \nabla^2 G_p(X^{x_0}_t)(F(X^{x_0}_t), G_\ell(X^{x_0}_t)) + \nabla G_p(X^{x_0}_t)\nabla F(X^{x_0}_t)G_\ell(X^{x_0}_t)$$
$$=:\sum_{i=1}^7 I_{i,p,\ell}(t).$$
Since $F,G_i:E \rightarrow \text{dom}(A^\infty)$ has bounded derivatives of all orders (by Assumption C2), we shall use Lemma \ref{elem1} to get

\begin{eqnarray*}
\|I_{1,p,\ell}(t) - I_{1,p,\ell}(s)\|_E&\le& \|\nabla G_p(X^{x_0}_t)G_\ell(X^{x_0}_t) - \nabla G_p(X^{x_0}_t)G_\ell(X^{x_0}_s)\|_{\text{dom}(A)}\\
& &\\
&+& \|A \nabla G_p(X^{x_0}_t)G_\ell(X^{x_0}_s) - A \nabla G_p(X^{x_0}_s)G_\ell(X^{x_0}_s)\|_E\\
& &\\
&\le& \sup_{y\in E}\|\nabla G_p(y)\|_{E\rightarrow \text{dom}(A)} \|G_\ell(X^{x_0}_t)  - G_\ell(X^{x_0}_s)\|_E\\
& &\\
&+& \|\nabla G_p(X^{x_0}_t) - \nabla G_p(X^{x_0}_s)\|_{E\rightarrow \text{dom}(A)} \|G_\ell(X^{x_0}_s)\|_E\\
& &\\
&\le& C \sup_{y\in E}\|\nabla G_p(y)\|_{E\rightarrow \text{dom}(A)} \|\delta X^{x_0}_{ts}\|_E\\
& &\\
&+&C \sup_{y\in E}\|\nabla^2 G_p(y)\|_{E\rightarrow \text{dom}(A)} \|\delta X^{x_0}_{ts}\|_E (1+\|X^{x_0}\|_{0,0}),
\end{eqnarray*}

\

$$
\|I_{2,p,\ell}(t)-I_{2,p,\ell}(s)\|_E\le \|\nabla^2 F(X^{x_0}_t)(G_p(X^{x_0}_t),G_\ell(X^{x_0}_t)) - \nabla^2 F(X^{x_0}_s)(G_p(X^{x_0}_t),G_\ell(X^{x_0}_t))\|_E
$$
$$+\|\nabla^2 F(X^{x_0}_s)(G_p(X^{x_0}_t),G_\ell(X^{x_0}_t)) - \nabla^2 F(X^{x_0}_s)(G_p(X^{x_0}_s),G_\ell(X^{x_0}_t))\|_E$$
$$+\|\nabla^2 F(X^{x_0}_s)(G_p(X^{x_0}_s),G_\ell(X^{x_0}_t)) -\nabla^2 F(X^{x_0}_s)(G_p(X^{x_0}_s),G_\ell(X^{x_0}_s))\|_E $$
$$\le \|\nabla^2 F(X^{x_0}_t) - \nabla^2 F(X^{x_0}_s)\|_{(2),0\rightarrow 0} \|G_p(X^{x_0}_t)\|_E \|G_\ell(X^{x_0}_t)\|_E$$
$$+\|\nabla^2 F(X^{x_0}_s)\|_{(2),0\rightarrow 0}\|G_p(X^{x_0}_t) - G_p(X^{x_0}_s)\|_E\|G_\ell(X^{x_0}_t)\|_E$$
$$+\|\nabla^2 F(X^{x_0}_s)\|_{(2),0\rightarrow 0} \|G_\ell(X^{x_0}_t) - G_\ell(X^{x_0}_s)\|_E\|G_p(X^{x_0}_s)\|_E$$
$$\le C \delta X^{x_0}_{ts}\|_E(1+ \|X^{x_0}\|_{0,0})^2$$
$$+2C\sup_{y\in E}\|\nabla^2 F(y)\|_{(2),0\rightarrow 0}(1+ \|X^{x_0}\|_{0,0}) \|\delta X^{x_0}_{ts}\|_E,$$

\

$$
\|I_{3,p,\ell}(t)-I_{3,p,\ell}(s)\|\le\|\nabla F(X^{x_0}_t)\nabla G_p(X^{x_0}_t)G_\ell(X^{x_0}_t) - \nabla F(X^{x_0}_s)\nabla G_p(X^{x_0}_t)G_\ell(X^{x_0}_t)\|_E$$
$$+\|\nabla F(X^{x_0}_s)\nabla G_p(X^{x_0}_t)G_\ell(X^{x_0}_t) - \nabla F(X^{x_0}_s)\nabla G_p(X^{x_0}_s)G_\ell(X^{x_0}_t)\|_E$$
$$+\|\nabla F(X^{x_0}_s)\nabla G_p(X^{x_0}_s)G_\ell(X^{x_0}_t) - \nabla F(X^{x_0}_s)\nabla G_p(X^{x_0}_s)G_\ell(X^{x_0}_s)\|_E$$
$$\le C \sup_{y\in E}|\nabla^2 F(y)|_{(2),0\rightarrow 0}\|\delta X^{x_0}_{ts} \|_E (1+ \| X^{x_0}\|_{0,0})$$
$$+ C\sup_{y\in E}\|\nabla F(y)\| \sup_{y\in E}\| \nabla^2G_p(y)\|_{(2),0\rightarrow 0}\|\delta X^{x_0}_{ts}\|_E(1+\|X^{x_0}\|_{0,0}),$$

\

$$\|I_{4,p,\ell}(t)-I_{4,p,\ell}(s)\|_E\le \|\nabla^2G_p(X^{x_0}_t)(AX^{x_0}_t, G_\ell(X^{x_0}_t)) - \nabla^2G_p(X^{x_0}_s)(AX^{x_0}_t, G_\ell(X^{x_0}_t))\|_E$$
$$+\|\nabla^2G_p(X^{x_0}_s)(AX^{x_0}_t, G_\ell(X^{x_0}_t)) -\nabla^2G_p(X^{x_0}_s)(AX^{x_0}_s, G_\ell(X^{x_0}_t))  \|_E$$
$$+\|\nabla^2G_p(X^{x_0}_s)(AX^{x_0}_s, G_\ell(X^{x_0}_t))  - \nabla^2G_p(X^{x_0}_s)(AX^{x_0}_s, G_\ell(X^{x_0}_s)) \|_E$$
$$\le\sup_{y\in E} \|\nabla^3 G_p(y)\|_{(3),0\rightarrow 0}\|\delta X^{x_0}_{ts}\|_E\|X^{x_0}\|_{0,\text{dom}(A)}(1+\|X^{x_0}\|_{0,0}) $$
$$+\sup_{y\in E}\|\nabla^2 G_p(y)\|_{(2),0\rightarrow 0}\|\delta X^{x_0}_{ts}\|_{\text{dom}(A)}(1+ \|X^{x_0}\|_{0,0})$$
$$+ \sup_{y\in E}\|\nabla^2 G_p(y)\|_{(2),0\rightarrow 0} \|\delta X^{x_0}_{ts}\|_E(1+ \|X^{x_0}\|_{0,\text{dom}(A)}),$$

\

$$\|I_{5,p,\ell}(t) -I_{5,p,\ell}(t)\|_E \le \|\nabla G_p(X^{x_0}_t)AG_\ell(X^{x_0}_t) -\nabla G_p(X^{x_0}_s)AG_\ell(X^{x_0}_t)\|_E$$
$$+\|\nabla G_p(X^{x_0}_s)AG_\ell(X^{x_0}_t) - \nabla G_p(X^{x_0}_s)AG_\ell(X^{x_0}_s)\|_E$$
$$\le C\sup_{y\in E}\|\nabla^2 G_p(y)\|_{(2),0\rightarrow 0} \|\delta X^{x_0}_{ts}\|_E\|G_\ell(X^{x_0}_t)\|_{\text{dom}(A)}$$
$$+C\sup_{y\in E}\|\nabla G_p(y)\|\sup_{y\in E}\|\nabla G_\ell(y)\|_{E\rightarrow\text{dom}(A)} \|\delta X^{x_0}_{ts}\|_E,$$
$$\le C\sup_{y\in E}\|\nabla^2 G_p(y)\|_{(2),0\rightarrow 0} \|\delta X^{x_0}_{ts}\|_E(1+ \|X^{x_0}\|_{0,\text{dom}(A)})$$
$$+C\sup_{y\in E}\|\nabla G_p(y)\|\sup_{y\in E}\|\nabla G_\ell(y)\|_{E\rightarrow\text{dom}(A)}\|\delta X^{x_0}_{ts}\|_E, $$

\

$$\|I_{6,p,\ell}(t) - I_{6,p,\ell}(s)\|_E \le \| \nabla^2 G_p(X^{x_0}_t)(F(X^{x_0}_t), G_\ell(X^{x_0}_t)) -\nabla^2 G_p(X^{x_0}_s)(F(X^{x_0}_t), G_\ell(X^{x_0}_t))\|_E$$
$$+\|\nabla^2 G_p(X^{x_0}_s)(F(X^{x_0}_t), G_\ell(X^{x_0}_t)) - \nabla^2 G_p(X^{x_0}_s)(F(X^{x_0}_s), G_\ell(X^{x_0}_s))\|_E$$
$$\le C\sup_{y\in E}\| \nabla^3 G_p(y)\|_{(3),0\rightarrow 0} \|\delta X^{x_0}_{ts}\|_E(1+\|X^{x_0}\|_{0,0})^2$$
$$+ C\sup_{y\in E}|\nabla^2 G_p(y)\|_{(2),0\rightarrow 0} \|\delta X^{x_0}_{ts}\|^2_E, $$

$$\|I_{7,p,\ell}(t) - I_{7,p,\ell}(s)\|_E\le \|\nabla G_p(X^{x_0}_t)\nabla F(X^{x_0}_t)G_\ell(X^{x_0}_t) - \nabla G_p(X^{x_0}_s)\nabla F(X^{x_0}_t)G_\ell(X^{x_0}_t)\|_E$$
$$+\|\nabla G_p(X^{x_0}_s)\nabla F(X^{x_0}_t)G_\ell(X^{x_0}_t) - \nabla G_p(X^{x_0}_s)\nabla F(X^{x_0}_s)G_\ell(X^{x_0}_s)\|_E$$
$$\le C\sup_{y\in E}\| \nabla^2 G_p(y)\|_{(2),0\rightarrow 0} \|\delta X^{x_0}_{ts}\|_E\sup_{y\in E}\|\nabla F(y)\| (1+\|X^{x_0}\|_{0,0})$$
$$+C\sup_{y\in E}\|\nabla^2 F(y)\|_{(2),0\rightarrow 0}\|\delta X^{x_0}_{ts} \|_E(1+\|X^{x_0}\|_{0,0})$$
$$+C\sup_{y\in E}\|\nabla F(y)\| \|\delta X^{x_0}_{ts}\|_E.$$

This shows that (\ref{deltaV}) holds true for vector fields of the $[G_0,G_p]; p=1, 2, \ldots~.$ A similar computation also shows (\ref{deltaV}) for vector fields of the form $[G_j,G_p]; j,p=1, 2, \ldots~.$ This shows that (\ref{iter}) holds for vectors fields $V\in \mathcal{V}_1$. By using (\ref{lieG0}) and (\ref{lieGp}) and iterating the argument, we recover (\ref{deltaV}) for vector fields $V\in \mathcal{V}_n; n\ge 0$ and hence we conclude the proof.
\end{proof}
\subsection{Doob-Meyer-type decomposition}
Let us now turn our attention to a Doob-Meyer decomposition in the framework of integral equations involving a trace-class FBM. This will play a key step in the proof of the existence of density of Theorem \ref{ExistenceResultTHESIS}. We recall the parameters $\tilde{\gamma},\delta,\lambda$ are fixed according to (\ref{Aindices}). In a rather general situation, Friz and Schekar \cite{friz} have developed the concept of \textit{true roughness} which plays a key role in determining the uniqueness of the Gubinelli's derivative in rough path theory. For sake of completeness, we recall the following concepts borrowed from \cite{gubinelli} and adapted to our infinite-dimensional setting. For a given $g\in\mathcal{W}^{\tilde{\gamma},\delta,\infty}_{\lambda,T}$, we write

$$\mathbf{G}_t=\sum_{j=1}^\infty \sqrt{\lambda_j}e_j g^j_t; 0\le t\le T.$$
Of course, $\mathbf{G}\in \mathcal{C}_1^{\tilde{\gamma}}(U)$ for every $g\in\mathcal{W}^{\tilde{\gamma},\delta,\infty}_{\lambda,T}$.
\begin{definition}
Given a path $\mathbf{G}\in \mathcal{C}^{\tilde{\gamma}}_1(U)$, we say that $Y\in \mathcal{C}^{\tilde{\gamma}}_1(\mathbb{R})$ is controlled by $\mathbf{G}$ if there exists $Y'\in \mathcal{C}^{\tilde{\gamma}}_1(U^*)$ so that the remainder term given implicitly through the relation

$$\delta Y_{ts} = Y'_s\delta \mathbf{G}_{ts} + R^Y_{ts}, s < t,$$
satisfies $\|R^Y\|_{2\tilde{\gamma},\mathbb{R}} < \infty$.
 \end{definition}


In our context, we restrict the analysis to the following class of derivatives. Let $C^{\beta,\infty}_1$ be the set of all sequences of real-valued functions on $[0,T]$, $(f_i)_{i=1}^\infty$ such that $\sup_{i\ge 1}\| \delta f_i\|_{\beta} < \infty$ for $0 < \beta \le 1$. Let $Y':[0,T]\rightarrow U^*$ be a $U^*$-valued path such that $(Y'^i)_{i=1}^\infty\in C^{\tilde{\gamma},\infty}_1 $ where $Y'^i=Y'(e_i); i\ge 1$ . We then observe if

\begin{equation}\label{controlledprocesses}
\delta Y_{ts} = Y'_s\delta \mathbf{G}_{ts} + R^Y_{ts},
\end{equation}
then, $\delta Y_{ts}=\int_s^t Y'_rd\mathbf{G}_r=\sum_{i=1}^\infty\sqrt{\lambda_i}\int_s^t Y'^i_rd g^i_r$ is a well defined Young integral, where the remainder is characterized by

$$
R^{Y}_{ts} = \sum_{j=1}^\infty \sqrt{\lambda_j}\int_s^t \big(Y'^{j}_r - Y'^{j}_s\big) d g^j_r,
$$
and $\|R^Z\|_{2\tilde{\gamma},\mathbb{R}} < \infty$ due to Young-Loeve inequality. The class of all pairs $(Y,Y')$ of the form (\ref{controlledprocesses}) constitutes a subset of controlled paths which we denote it by $\mathscr{D}^{2\tilde{\gamma}}_{\mathbf{G}}([0,T];U^*)$. Next, we recall the following concept of truly rough (see \cite{friz,frizhairer}).

\begin{definition}
For a fixed $s\in (0,T]$, we call a $\frac{1}{\tilde{\gamma}}$-rough path $\mathbf{G}:[0,T]\rightarrow U$, ''\textbf{rough at time}~s'' if

$$\forall v^*\in U^*~\text{non-null}~:~\limsup_{t\downarrow s}\frac{|\langle v^*,\delta \mathbf{G}_{ts}\rangle |}{|t-s|^{2\tilde{\gamma}}}=+\infty.$$
If $\mathbf{G}$ is rough on some dense subset of $[0,T]$, then we call it \textbf{truly rough}.
\end{definition}
\begin{lemma}
The $U$-valued trace-class FBM given by (\ref{QFBM}) is truly rough.
\end{lemma}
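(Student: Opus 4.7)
The plan is to fix a deterministic dense subset $D\subset [0,T]$, say $D=\mathbb{Q}\cap[0,T]$, and prove that almost surely the path $B$ is rough at every $s\in D$. By countability of $D$, the problem reduces, for each fixed $s\in D$, to the claim that almost surely
$$
\Xi^{v^\ast}(\omega)\;:=\;\limsup_{t\downarrow s}\frac{|\langle v^\ast,\delta B_{ts}(\omega)\rangle|}{|t-s|^{2\tilde{\gamma}}}\;=\;+\infty\qquad \text{for every } v^\ast\in U^\ast\setminus\{0\}.
$$

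The first ingredient is a semi-continuity property of $v^\ast\mapsto \Xi^{v^\ast}$ for each fixed $\omega$. Using that $t\mapsto B_t(\omega)$ is continuous almost surely, the $\limsup$ in the definition of $\Xi^{v^\ast}$ can be computed along rational times, so that
$$
\Xi^{v^\ast}(\omega)\;=\;\inf_{n\ge 1}\;\sup_{t\in \mathbb{Q}\cap (s,s+1/n)}\frac{|\langle v^\ast,\delta B_{ts}(\omega)\rangle|}{|t-s|^{2\tilde{\gamma}}}.
$$
For each $t$ the quotient is linear, hence continuous, in $v^\ast$, so each inner supremum is lower semi-continuous in $v^\ast$. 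As a pointwise decreasing limit of lower semi-continuous functions, $v^\ast \mapsto \Xi^{v^\ast}(\omega)$ is therefore upper semi-continuous on $U^\ast$, and the level set $\{v^\ast : \Xi^{v^\ast}(\omega)<+\infty\}$ is open.

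Next, I fix $v^\ast\in U^\ast\setminus\{0\}$ and compute $\Xi^{v^\ast}$ almost surely. Expanding
$$
\langle v^\ast, B_t - B_s\rangle \;=\; \sum_{k\ge 1}\sqrt{\lambda_k}\,v^\ast(e_k)\,(\beta^k_t-\beta^k_s),
$$
the scalar process $t\mapsto \langle v^\ast, B_t-B_s\rangle$ is centered Gaussian with increment covariance $C(v^\ast)\,R_H(\cdot,\cdot)$, where $C(v^\ast):=\sum_{k\ge 1}\lambda_k\,(v^\ast(e_k))^2$. Since $(e_k)$ is an orthonormal basis of $U$ and every eigenvalue $\lambda_k$ is strictly positive, $v^\ast\neq 0$ forces $0<C(v^\ast)\le \|v^\ast\|^2\,\mathrm{trace}\,Q<+\infty$, so the process is a scaled scalar FBM of Hurst exponent $H$ with non-trivial scale $\sqrt{C(v^\ast)}$. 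The law of the iterated logarithm for scalar FBM yields $\limsup_{h\downarrow 0}\,h^{-H}\,|\langle v^\ast, B_{s+h}-B_s\rangle|=+\infty$ almost surely. The standing parameter constraints in (\ref{Aindices}) give $\tilde{\gamma}+\kappa>1$ and $\kappa<1/2$, whence $\tilde{\gamma}>1/2$ and consequently $2\tilde{\gamma}>1>H$; multiplying the LIL by the diverging factor $h^{H-2\tilde{\gamma}}$ yields $\Xi^{v^\ast}=+\infty$ almost surely.

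Finally, since $U^\ast$ is separable, pick a countable dense subset $D^\ast\subset U^\ast\setminus\{0\}$. By the previous step and countable union, on a full-measure event $\Xi^{v^\ast}=+\infty$ for every $v^\ast\in D^\ast$. On this event the open set $\{v^\ast:\Xi^{v^\ast}<+\infty\}$ is disjoint from a dense subset of $U^\ast\setminus\{0\}$ and hence cannot meet $U^\ast\setminus\{0\}$ at all, so $\Xi^{v^\ast}=+\infty$ for every $v^\ast\neq 0$. Intersecting over $s\in \mathbb{Q}\cap [0,T]$ produces the dense roughness set required by the definition. The principal obstacle is precisely this exchange of quantifiers: the LIL yields the conclusion only $v^\ast$-by-$v^\ast$, and the upper semi-continuity of $\Xi^{v^\ast}$ in $v^\ast$ combined with separability of $U^\ast$ is what allows one to upgrade to a statement holding uniformly in $v^\ast$ on a single event of full measure.
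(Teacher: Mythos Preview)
Your overall strategy---reducing to rational times $s$, using that $\langle v^\ast,B\rangle$ is a scalar FBM with positive variance, invoking the LIL, and then trying to pass from a countable dense set of $v^\ast$ to all $v^\ast$---is the natural one, and the paper's own sketch points in the same direction (Friz--Shekhar together with the Gaussian LIL of Marcus--Rosen). However, the key topological step you use to handle the quantifier exchange is incorrect.

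You assert that $\Xi^{v^\ast}=\inf_{n}g_n(v^\ast)$, with each $g_n$ lower semicontinuous, is therefore \emph{upper} semicontinuous. This is false: an infimum (or decreasing limit) of lower semicontinuous functions has no reason to be upper semicontinuous. A trivial counterexample is $g_n(v)=\mathds{1}_{\{v\neq 0\}}$ on any normed space; each $g_n$ is LSC, the sequence is constant, and the limit is not USC. Consequently you cannot conclude that $\{v^\ast:\Xi^{v^\ast}<\infty\}$ is open, and the density of $D^\ast$ no longer forces this set to be empty. Note also that the subadditivity $\Xi^{v+w}\le\Xi^{v}+\Xi^{w}$ only shows that $\{\Xi<\infty\}$ is a linear subspace, not that it is closed, so this route does not rescue the argument either.

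This is precisely the delicate point in infinite dimensions: in $\mathbb{R}^d$ the Friz--Shekhar argument covers the unit sphere by a finite net and combines Gaussian small-ball estimates with compactness, which is unavailable here; moreover, since $\lambda_k\downarrow 0$, the variance $C(v^\ast)$ is not bounded away from zero on the unit sphere, so one cannot simply transfer a uniform LIL lower bound. The paper's reference to Theorem~7.2.15 in Marcus--Rosen is meant to supply the sharper local LIL for the Gaussian process $B$ needed to control all directions simultaneously; a bare separability-plus-semicontinuity argument is not enough.
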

\begin{proof}
The proof follows the same lines of Example 2 in \cite{friz} together with the law of iterated logarithm for Gaussian processes as described by Th 7.2.15 in \cite{marcus}. We left the details to the reader.
\end{proof}
The following result is given by Th. 6.5 in Friz and Hairer \cite{frizhairer}.
\begin{theorem}\label{doobmeyer}
Assume that $\mathbf{G}$ is a truly rough path. Let $(Y,Y')$ and $(\tilde{Y},\tilde{Y}')$ be controlled paths in $\mathscr{D}^{2\tilde{\gamma}}_{\mathbf{G}}([0,T];U^*)$ and let $N,\tilde{N}$ be a pair of real-valued continuous paths. Assume that

$$\int_0^\cdot Y d\mathbf{G} + \int_0^\cdot Ndt =\int_0^\cdot \tilde{Y} d\mathbf{G} + \int_0^\cdot \tilde{N}dt$$
on $[0,T]$. Then, $\big(Y,Y'\big) = \big(\tilde{Y},\tilde{Y}'\big)$ and $N=\tilde{N}$ on $[0,T]$.
\end{theorem}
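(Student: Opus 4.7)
The plan is to reduce to a homogeneous identity, expand both integrals to leading order around a generic rough point, and use the truly rough property to force each summand to vanish separately.

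First, by linearity of both integrals, I would set $Z=Y-\tilde{Y}$, $Z'=Y'-\tilde{Y}'$ and $M=N-\tilde{N}$. Then $(Z,Z')\in\mathscr{D}^{2\tilde{\gamma}}_{\mathbf{G}}([0,T];U^*)$, the path $M$ is continuous, and the hypothesis becomes
\begin{equation*}
\int_0^t Z\,d\mathbf{G} + \int_0^t M\,dr = 0,\qquad t\in[0,T].
\end{equation*}
It suffices to show $Z\equiv 0$, $M\equiv 0$ and $Z'\equiv 0$. I would then fix $s$ in the dense subset on which $\mathbf{G}$ is rough and use the sewing construction of the Young integral together with the Young-Loeve inequality applied to the controlled pair $(Z,Z')$ to expand
\begin{equation*}
\int_s^t Z\,d\mathbf{G} = \langle Z_s,\delta\mathbf{G}_{ts}\rangle + \varrho_{ts},\qquad |\varrho_{ts}|\le C(t-s)^{2\tilde{\gamma}},
\end{equation*}
while continuity of $M$ gives $\int_s^t M\,dr = M_s(t-s) + o(t-s)$. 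Subtracting yields the key local identity
\begin{equation*}
\langle Z_s,\delta\mathbf{G}_{ts}\rangle = -M_s(t-s) - \varrho_{ts} + o(t-s).
\end{equation*}

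Next, assuming $Z_s\neq 0$ at some such rough time $s$, I would apply the truly rough hypothesis to $v^*=Z_s$ to extract a sequence $t_n\downarrow s$ with $|\langle Z_s,\delta\mathbf{G}_{t_n s}\rangle|/(t_n-s)^{2\tilde{\gamma}}\to+\infty$; substituted into the local identity this should force a contradiction, exactly along the lines of Friz and Hairer \cite{frizhairer}, Theorem~6.5. This gives $Z_s=0$ at every rough $s$, hence $Z\equiv 0$ on $[0,T]$ by continuity of $Z$ and density of the rough set. Once $Z\equiv 0$, the defining identity reduces to $\int_0^t M\,dr=0$ for all $t$, and continuity of $M$ yields $M\equiv 0$.

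To upgrade to $Z'\equiv 0$, I would plug $Z\equiv 0$ into the controlled-path relation $\delta Z_{ts} = \langle Z'_s,\delta\mathbf{G}_{ts}\rangle + R^Z_{ts}$, obtaining $|\langle Z'_s,\delta\mathbf{G}_{ts}\rangle|=|R^Z_{ts}|\le C(t-s)^{2\tilde{\gamma}}$, which is directly incompatible with the truly rough property applied to $v^*=Z'_s$ unless $Z'_s=0$; $\tilde{\gamma}$-H\"{o}lder continuity of $Z'$ then gives $Z'\equiv 0$. The hard part will be the contradiction step for $Z$: in the Young regime $\tilde{\gamma}>1/2$ the Lebesgue correction $M_s(t-s)$ has larger order than the rough remainder $\varrho_{ts}=O((t-s)^{2\tilde{\gamma}})$, so the truly rough hypothesis must be exploited with care---along a subsequence where the oscillation of $\mathbf{G}$ in the direction $Z_s$ truly dominates the smooth drift---to separate the two competing scales. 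In contrast, the $Z'$ step is clean because no Lebesgue-type correction intervenes.
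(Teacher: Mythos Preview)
The paper does not prove this theorem; it simply attributes the result to Friz and Hairer \cite{frizhairer}, Theorem~6.5. Your outline is exactly that argument, and your reduction to the homogeneous identity and your treatment of the Gubinelli derivative $Z'$ are both correct and essentially the Friz--Hairer proof.

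However, the difficulty you flag in the $Z$-step is real and you do not actually resolve it. In the Young regime $\tilde\gamma>1/2$ the exponent in the truly rough hypothesis is $2\tilde\gamma>1$, so the condition
\[
\limsup_{t\downarrow s}\frac{|\langle Z_s,\delta\mathbf{G}_{ts}\rangle|}{(t-s)^{2\tilde\gamma}}=+\infty
\]
is \emph{weaker} than what you need: dividing your local identity by $(t-s)^{2\tilde\gamma}$ makes the drift term $M_s(t-s)^{1-2\tilde\gamma}$ blow up as well, so no contradiction follows. Saying one should pass to a subsequence ``where the oscillation truly dominates the smooth drift'' begs the question, because the stated hypothesis gives no control at scale $(t-s)$. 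What is actually required is the stronger property
\[
\limsup_{t\downarrow s}\frac{|\langle v^*,\delta\mathbf{G}_{ts}\rangle|}{t-s}=+\infty\quad\text{for every non-null }v^*\in U^*,
\]
i.e.\ that $\langle v^*,\mathbf{G}\rangle$ is nowhere differentiable; trace-class FBM with $H<1$ does satisfy this (by the law of the iterated logarithm, as invoked in the paper's proof that $B$ is truly rough). With this stronger input, divide the local identity by $(t-s)$: the right-hand side tends to $-M_s$, while the left-hand side is unbounded along a subsequence whenever $Z_s\neq 0$, giving the contradiction cleanly. In short, your proof sketch matches the paper's citation in spirit, but the ``with care'' clause hides a genuine gap that is closed only by strengthening the roughness input beyond the stated $2\tilde\gamma$-condition.
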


\subsection{Proof of Theorem \ref{ExistenceResultTHESIS}}
We are now in position to prove the main result of this paper.
\begin{proof}
Fix $x_0\in \text{dom}(A^\infty)$ and $t\in (0,T]$. By Lemma \ref{Malliavinmatrix}, we have
$$\gamma_t = \big(\mathcal{T}\circ\mathbf{J}_{0\rightarrow t}(x_0)\big)~ \mathcal{C}_t~\big(\mathcal{T}\circ\mathbf{J}_{0\rightarrow t}(x_0)\big)^*, $$
so that it is sufficient to prove that $\gamma_t$ is positive definite a.s. For  this purpose, we start by noticing that

$$\langle \gamma_t x, x\rangle_{\mathbb{R}^d} = \Big\langle \mathcal{C}_t\big(\mathcal{T}\circ\mathbf{J}_{0\rightarrow t}(x_0)\big)^* x, \big(\mathcal{T}\circ\mathbf{J}_{0\rightarrow t}(x_0)\big)^* x \Big\rangle_{E}; x\in \mathbb{R}^d.$$
We observe that $\big(\mathcal{T}\circ\mathbf{J}_{0\rightarrow t}(x_0)\big)^*$ is one-to-one. By assumption, $\text{Ker}\mathcal{T}^*=\{\mathbf{0}\}$ and clearly $\text{Ker} \mathbf{J}^{*}_{0\rightarrow t}(x_0)=\{\mathbf{0}\}$. Indeed, if $y\in \text{ker}\mathbf{J}^{*}_{0\rightarrow t}(x_0)$, then for every $x\in E$

\begin{eqnarray*}
\langle y, S(t)x\rangle_E &=& \langle y, \mathbf{J}_{0\rightarrow t}(x_0) \mathbf{J}^+_{0\rightarrow t}(x_0)S(t)x\rangle_E\\
& &\\
&=&\langle \mathbf{J}^{+,*}_{0\rightarrow t}(x_0)\mathbf{J}^*_{0\rightarrow t}(x_0)y,S(t)x\rangle_{S(t)E}=0.
\end{eqnarray*}
This implies $y\in \big(S(t)E\big)^\perp = \{\mathbf{0}\}$ (the orthogonal complement in $E$). Therefore, it is sufficient to check

\begin{equation}
\mathcal{C}_t~\text{is positive definite a.s.}
\end{equation}

Similar to the classical Brownian motion case, we argue by contradiction. Let us suppose there exists $\varphi_0\neq \mathbf{0}$ such that

\begin{equation}\label{contr1}
\mathbb{P}\big\{\langle \mathcal{C}_t \varphi_0, \varphi_0\rangle_{E}=0 \big\}>0.
\end{equation}
Take $\varphi\in E$. By (\ref{Ctoperator}), we have

\begin{equation}\label{exist1}
\langle \mathcal{C}_t\varphi,\varphi\rangle_{E} =  \alpha_H \sum_{\ell=1}^\infty \int_0^t \int_0^t\big\langle \mathbf{J}^+_{0\rightarrow u}(x_0) G_\ell(X^{x_0}_u),\varphi\big\rangle_{E}\big\langle \mathbf{J}^+_{0\rightarrow v}(x_0) G_\ell(X^{x_0}_v),\varphi\big\rangle_{E}|u-v|^{2H-2}dudv.
\end{equation}
Let us define

$$K_s = \overline{\text{span}\{\mathbf{J}^+_{0\rightarrow r}(x_0)G_\ell(X^{x_0}_r); 0\le r\le s,\ell\in \mathbb{N}\}}; 0 < s\le T,$$
and we set $K_{0+}=\cap_{s>0}K_s$. The Brownian filtration $\mathbb{F}$ allows us to make use of the Blumental zero-one law to infer that $K_{0+}$ is deterministic\footnote{We say that a random subset $A\subset E_\kappa$ is deterministic a.s when all random elements $a\in A$ are constant a.s} a.s. Let $N>0$ be a natural number and let $N_s$ be the (possibly infinite) dimension of the quotient space $\frac{K_s}{K_{0+}}$.  Consider the non-decreasing adapted process $\big\{\text{min}\{N,N_s\}, 0 < s \le T\big\}$ and the stopping time

$$S=\inf\big\{0 < s\le T; \text{min}~\{N,N_s\}>0\big\}.$$
One should notice that $S>0$ a.s. If $S=0$ on a set $A$ of positive probability, then for every $\epsilon >0$ there exists $0 < s\le T$ such that

$$\epsilon > s >0~\text{and}~\text{min}\{N_s,N\}>0,$$
on $A$. This means that we should have $N_s>0$ for every $s\in (0,T]$ on $A$. This implies that with a positive probability the dimension of $\frac{K_{0+}}{K_{0+}}$ is strictly positive which is a contradiction.

We now claim that $K_{0+}$ is a proper subset of $E$. Otherwise, $K_{0+} = E$ which implies $K_s = E$ for every $0 < s\le T$. In this case, if $\varphi \in E$ is such that $\langle \mathcal{C}_t \varphi,\varphi\rangle_{E}=0$ with positive probability, then  $\big\langle \mathbf{J}^+_{0\rightarrow r}(x_0) G_\ell(X^{x_0}_r),\varphi\big\rangle_{E}=0$ for every $r\in [0,t]$ and $\ell\in \mathbb{N}$ with positive probability which in turn would imply that $\varphi\in K_s^\perp = E^\perp$ so that $\varphi=\mathbf{0}$. This contradicts (\ref{contr1}). Now we are able to select a non-null $\varphi\in E^*$ such that $K_{0+}\subset \text{Ker}\varphi$. At first, we observe $\varphi(K_s)=0$ for every $0\le s < S$ so that

\begin{equation}\label{exists2}
\langle \mathbf{J}^+_{0\rightarrow s}G_\ell(X^{x_0}_s), \varphi\rangle_{E}=0~\forall \ell\ge 1~\text{and}~0\le s< S.
\end{equation}
We claim
\begin{equation}\label{exists3}
\langle \mathbf{J}^+_{0\rightarrow s}(x_0) V(X^{x_0}_s),\varphi \rangle_{E}=0~\text{for every}~0\le s < S, V\in \mathcal{V}_k, k\ge 0,
\end{equation}
where we observe $V$ in (\ref{exists3}) takes values on $S(T)E$. We show (\ref{exists3}) by induction. For $k=0$, (\ref{exists2}) implies (\ref{exists3}). Let us assume (\ref{exists3}) holds for $k-1$. Let $V\in \mathcal{V}_{k-1}$. Then, we have

\begin{eqnarray}
\nonumber 0&=&\langle \mathbf{J}^+_{0\rightarrow s}(x_0) V(X^{x_0}_s),\varphi \rangle_{E}\\
\nonumber & &\\
\nonumber &=&\langle V(x_0),\varphi\rangle_{E}+\int_0^s \langle \mathbf{J}^+_{0\rightarrow r}(x_0)[G_0,V](X^{x_0}_r),\varphi\rangle_{E}dr\\
\nonumber & &\\
\nonumber &+& \sum_{\ell=1}^\infty \sqrt{\lambda_\ell}\int_0^s \langle \mathbf{J}^+_{0\rightarrow r}(x_0)[G_\ell,V](X^{x_0}_r),\varphi \rangle_{E}d\beta^\ell_r,
\end{eqnarray}
where $\langle V(x_0),\varphi\rangle_{E}=0$ by the induction hypothesis. By Theorem \ref{doobmeyer}, we must have

$$\langle \mathbf{J}^+_{0\rightarrow r}(x_0)[G_0,V](X^{x_0}_r),\varphi\rangle_{E}=\langle \mathbf{J}^+_{0\rightarrow r}(x_0)[G_\ell,V](X^{x_0}_r),\varphi \rangle_{E}=0,$$
for every $0\le r\le s$ and $0\le s < S$ and $\ell\ge 1 $. This proves (\ref{exists3}). Clearly, (\ref{exists3}) implies

\begin{equation}\label{exist4}
\varphi(\mathcal{V}_k(x_0))=0~\text{for every non-negative integer}~k,
\end{equation}
and hence the H\"{o}rmander's bracket condition implies $\varphi=\mathbf{0}$. By Th 2.1.1 in \cite{nualart}, we then conclude the proof.
\end{proof}
\begin{remark}\label{oneexample}
The assumption that $S(t)E$ is dense in $E$ seems a bit restrictive but it covers a rather general class of examples. For instance, if $(A, \text{dom}~(A))$ is a densely defined self-adjoint operator such that

$$\sup_{x\in \text{dom}(A) \setminus \{0\}}\frac{\langle x,Ax\rangle_E}{\|x\|^2_E} < \infty,$$
then $(A,\text{dom}~A)$ is the generator of a self-adjoint analytic semigroup (see Th 7.3.4 and Example 7.4.5 in \cite{buhler}). Since analytic semigroups are one-to-one,  $S^*(t)$ is one-to-one for every $t\ge 0$ and hence, $S(t)E$ is dense in $E$ for every $t\ge 0$. The heat semigroup on $L^2$ has dense range (see \cite{fabre}). More generally, assume there exists a separable Hilbert space $W$ densely and continuously embedded
into $E$ with compact imbedding. Assume that

\begin{itemize}
  \item $A:W\rightarrow W^*$ is continuous and its restriction to $W$, $A_E:\text{dom}(A_E)\rightarrow E$ where $\text{dom}(A_E) = \{u\in W; Au\in E\}$ and $A_Eu=Au; u\in \text{dom}(A_E)$, is a self-adjoint operator.
  \item There exists $\lambda\in \mathbb{R}$ and $\eta>0$ such that

  $$\big( Au,u\big)_{W,W^*} + \lambda \|u\|^2_E\ge \eta \|u\|_W^2, $$
for each $u\in W$.
\end{itemize}
Then, $S(t)E$ is dense in $E$ for every $t\in [0,T]$. See e.g \cite{bejenaru} for further details.
\end{remark}






\end{document}